\newtheorem{Theorem}{Theorem}[section]
\newtheorem{Lemma}{Lemma}[section]
\newtheorem{Assumption}{Assumption}[section]
\newcommand{\convP}{\stackrel{\mathcal{P}}{\longrightarrow}}
\DeclareMathOperator{\vectorization}{vec}
\renewcommand{\P}{P}
\newcommand{\inP}{\stackrel{\mathcal{P}}{\longrightarrow}}
\newcommand{\bgamma}{\boldsymbol \gamma}
\newcommand{\inD}{\stackrel{\mathcal{D}}{\longrightarrow}}
\newcommand{\as}{\stackrel{a.s.}{\longrightarrow}}
\newcommand{\ds}{ ds}
\newcommand{\im}{\boldsymbol i}
\newcommand{\Int}{\int_0^1}
\newcommand{\la}{\langle}
\newcommand{\bV}{{\bf V}}
\newcommand{\bW}{{\bf W}}
\newcommand{\ra}{\rangle}
\newcommand{\dt}{ dt}
\newcommand{\dx}{ dx}
\newcommand{\dz}{ dz}
\newcommand{\du}{ du}
\newcommand{\E}{E} %asdf
\newcommand{\op}[1]{o_{P}\left(#1\right)}
\newcommand{\Op}[1]{O_{P}\left(#1\right)}
\newcommand{\bbeta}{\boldsymbol \beta}
\numberwithin{equation}{section}
\numberwithin{Theorem}{section}
\numberwithin{Remark}{section}
\numberwithin{Assumption}{section}
\numberwithin{Lemma}{section}
\begin{document}

%\begin{center}
\noindent {\bf\Large Detecting Changes in Functional Linear Models}
\footnote{Research partially supported by NSF grant
DMS 0905400
}
\renewcommand{\thefootnote}{\arabic{footnote}}
\\[3ex]
\\[1ex]
%Updated on \today.
{\large  Lajos Horv\'ath and Ron Reeder}\\[3ex]
{\em Department of Mathematics, University of Utah, 155 South
1440
%East, Salt Lake City, UT
84112-0090 USA}\bigskip
%\end{center}

\noindent{\bf Abstract} \\[1ex]
We observe two sequences of curves which are connected via an integral operator.  Our model includes linear models as well as autoregressive models in Hilbert spaces. We wish to test the null hypothesis that the operator did not change during the observation period. Our method is based on projecting the observations onto a suitably chosen finite dimensional space. The testing procedure is based on functionals of the  weighted residuals of the projections. Since the quadratic form is based on estimating the long-term covariance matrix of the residuals, we also provide some results on Bartlett-type estimators. \\

\medskip
\noindent
AMS Subject classifications: Primary 62H15, 60F17,  Secondary 62M10\\
Keywords: functional data, projections, weak dependence, change point, weak convergence

\bigskip

\section{Introduction}\label{s:intro}
Suppose $\{X_n(t),\ n=1,2,\dots,N\}$ and $\{Y_n(t),\ n=1,2,\dots,N\}$ are sequences of random functions on $[0,1]$ that satisfy the  linear relationship
\begin{equation}\label{e:model}
Y_n(t)=\int_0^1 \Psi_n(s,t) X_n(s)\ds + \epsilon_n(t).
\end{equation}
For example, $X_n(t)$ and $Y_n(t)$ may be the exchange rates of two currencies on day $n$ at time $t$, where the trading day is normalized so that $t$ ranges between $0$ and $1$. In other applications, $X_n$ can be the temperature and $Y_n$ the pollution level at a given location. If $\Psi_1=\Psi_2=\dots = \Psi_N$, we say that the model is stable. % and the common $\Psi$ can be estimated from the %sample.
  However, as the underlying  conditions change, the $\Psi$'s may also change.  Our estimates for the assumed common $\Psi$ as well as our predictions and inferences based on the model would be flawed if we falsely assume that the $\Psi$'s have not changed.  To test the applicability of this model with an unchanging $\Psi$, we will test the null hypothesis,
\begin{equation}\label{eq:null}
H_0:  \Psi_1=\Psi_2=\dots = \Psi_N,
\end{equation}
against the alternative
$$
 H_A:\;\;\Psi_1=\Psi_2=\ldots = \Psi_{k^*_1}\neq\Psi_{k^*_1+1} = \ldots =\Psi_{k^*_r} \neq\Psi_{k^*_r+1}=\ldots = \Psi_N
$$
with some unknown integers $k^*_1,\ldots k^*_r$. The $k^*_i$'s are called change-points, and the alternative, $H_A$,  is that there are exactly $r$ change-points. We assume that (\ref{e:model}) and $H_0$ hold and that both $\{X_n\}$ and $\{\epsilon_n\}$ are stationary sequences.
The model with non-changing (stable) $\Psi$ has received considerable attention in the literature. If $X_n$ and $\epsilon_n$ are independent sequences of independent processes, then (\ref{e:model}) is a functional version of the classical linear model (cf.\  Cardot et al (2003), Chiou et al (2004), Cai and Hall (2006) and  Ferraty and Vieu (2006)). If $X_n=Y_{n-1}$, then we have the functional AR(1) model in (\ref{e:model}) (cf.\  Bosq (2000), Kargin and Onatsky (2008) and Horv\'ath et al (2010)). Aue et al (2011) investigated the stability of high--frequency portfolio betas in the capital asset pricing model (CAPM). CAPM is a version of the model in (\ref{e:model}) where, in our notation, a vector valued $Y_n$ is a linear combination of vector valued $X_n$'s and an additional error term.  \\

 Let $C(s,t) = \mbox{var}\left(X_n(t), X_n(s)\right)$ and $D(s,t) = \mbox{var}\left(Y_n(t), Y_n(s)\right)$.  Let $\{(v_j(s), \lambda_j),\ 1\le j \le \infty\}$ and $\{(w_i(t), \tau_i),\ 1\le i \le \infty \}$ be eigenfunction-eigenvalue pairs associated with $C(s,t)$ and $D(s,t)$ respectively.  This means that $\tau_i w_i(t)=\int_0^1 D(t,s) w_i(s)\ds$ and $\lambda_j v_j(s)=\int_0^1 C(s,t) v_j(t)\dt$.  Assume that $\lambda_j$ is the $j^{th}$ largest eigenvalue of $C(s,t)$ and that $\tau_i$ is the $i^{th}$ largest eigenvalue of $D(s,t)$.  It can be  assumed that the eigenfunctions of $C(s,t)$ are orthonormal and also that the eigenfunctions of $D(s,t)$ are  orthonormal. % We will assume that $\lambda_1> \lambda_2 > \dots> \lambda_{p+1}$ and $\tau_1> \tau_2 > \dots >\tau_{q+1}$ so that $v_1(t),\dots , v_p(t)$ and $w_1(t),\dots, w_q(t)$ are uniquely defined up to random signs.  We can now use $\{v_j(t)w_i(s),\ 1\le i \le \infty , 1\le j \le \infty  \}$ as a basis for $L^2[0,1]^2$.  In particular,
We assume that $\Psi \in L^2[0,1]^2$ and  can therefore  be expressed %in terms of this basis.  We write
as
\begin{equation}\label{e:psi}
\Psi(s,t)=\sum_{i=1}^{\infty} \sum_{j=1}^{\infty} \psi_{i,j} v_j(s)w_i(t).
\end{equation}
Using  (\ref{e:psi}) we can write  the  model (\ref{e:model}) as
\begin{equation}\label{eq:Y}
\begin{aligned}
Y_n(t)&=\int_0^1 \Psi_n(s,t) X_n(s)\ds + \epsilon_n(t)\\
&= \int_0^1 \sum_{i=1}^{\infty} \sum_{j=1}^{\infty} \psi_{i,j} w_i(t)v_j(s) X_n(s)\ds + \epsilon_n(t)\\
&= \sum_{i=1}^{q} \sum_{j=1}^{p} \psi_{i,j} w_i(t) \int_0^1 v_j(s) X_n(s)\ds + \epsilon_n^*(t),\\
\end{aligned}
\end{equation}
where
$$
\epsilon_n^*(t)= \epsilon_n(t) +  \sum_{i=1}^{q} \sum_{j=p+1}^{\infty} \psi_{i,j} w_i(t) \int_0^1 v_j(s) X_n(s)\ds +  \sum_{i=q+1}^{\infty} \sum_{j=1}^{\infty} \psi_{i,j} w_i(t) \int_0^1 v_j(s) X_n(s)\ds.
$$
Equation (\ref{eq:Y}) means  that we keep  the parts of $Y_n$ and $X_n$ which are explained by the first $q$ and $p$ principle components.\\

To reduce the dimensionality of the model we will project both sides of (\ref{eq:Y}) onto the space spanned by the functions $\{w_i(t),\ 1\le i\le q\}$.  Doing this we obtain the  linear model
\begin{equation}\label{eq:naiveprojection}
\begin{pmatrix}
\la Y_n, w_1\ra\\
\la Y_n, w_2\ra \\
\vdots\\
\la Y_n, w_q \ra\\
\end{pmatrix}
=
\begin{pmatrix}
\psi_{1,1} & \psi_{1,2} & \dots & \psi_{1,p}\\
\psi_{2,1} & \psi_{2,2} & \dots & \psi_{2,p}\\
\vdots&\vdots &\dots &\vdots\\
\psi_{q,1} & \psi_{q,2} & \dots & \psi_{q,p}\\
\end{pmatrix}
\begin{pmatrix}
    \la X_n, v_1 \ra \\
    \la X_n, v_2 \ra\\
    \vdots\\
    \la X_n, v_p \ra\\
\end{pmatrix}
+
\begin{pmatrix}
    \la \epsilon_n^*, w_1 \ra  \\
    \la \epsilon_n^*, w_2 \ra  \\
    \vdots\\
    \la \epsilon_n^*, w_q \ra  \\
\end{pmatrix}.
\end{equation}

Instead of testing the null hypothesis, (\ref{eq:null}), exactly as it is stated, we would like to test if the coefficients $\{\psi_{i,j},\ 1\le i \le q, 1 \le j \le p\}$ remained constant during the observation period.  Essentially, we are testing the stability of $\Psi(s,t)$ over the space spanned by the most important principle components of the $X_n$'s and the $Y_n$'s.  Equation (\ref{eq:naiveprojection}) has the form of a linear model, but it is not a classical linear model because the regressors are random variables and are correlated with the errors.  Unfortunately, we cannot use (\ref{eq:naiveprojection}) directly, since the covariance functions, $D(s,t)$ and $C(s,t)$, and hence the eigenfunctions, $\{w_i(t),\ i=1,2,\dots q\}$ and $\{v_j(t),\ j=1,2,\dots,p\}$, are unknown.  Instead, we will use the estimates $\hat{D}_N(s,t)$ and $\hat{C}_N(s,t)$ and their corresponding eigenfunctions, $\{\hat{w}_{i,N}(t),\ i=1,2,\dots q\}$ and $\{\hat{v}_{j,N}(s),\ j=1,2,\dots,p\}$, where
$$
\begin{aligned}
\hat{D}_N(s,t)&=\frac{1}{N}\sum_{k=1}^N (Y_k(t)-\bar{Y}_N(t))( Y_k(s)-\bar{Y}_N(s))\quad\mbox{with}\quad
\bar{Y}_N(t)=\frac{1}{N}\sum_{i=1}^NY_i(t),\\
\hat{C}_N(s,t)&=\frac{1}{N}\sum_{k=1}^N (X_k(t)-\bar{X}_N(t))( X_k(s)-\bar{X}_N(s))\quad\mbox{with}\quad
\bar{X}_N(t)=\frac{1}{N}\sum_{i=1}^NX_i(t).
\end{aligned}
$$
Eigenfunctions corresponding to unique eigenvalues are uniquely determined up to signs.  For this reason, we cannot expect more than to have $\hat{w}_{i,N}$ be close to $\hat{d}_{i,N}w_i$ and $\hat{v}_{j,N}$ be close to $\hat{c}_{j,N}v_j$, where $\hat{d}_{i,N}$, $\hat{c}_{i,N}$ are random signs (cf. Theorem \ref{Th:eigen}). In order to obtain a linear model similar to equation (\ref{eq:naiveprojection}) that is useable, we must use our estimates for the eigenfunctions.  We replace equation (\ref{eq:Y}) with
\begin{equation}\label{eq:Y-estimated}
Y_n(t)=\sum_{i=1}^{q} \sum_{j=1}^{p} \hat{d}_{i,N} \psi_{i,j} \hat{c}_{j,N} \hat{w}_{i,N}(t) \int_0^1 \hat{v}_{j,N}(s) X_n(s)\ds + \epsilon_n^{**}(t),\\
\end{equation}
where
$$
\begin{aligned}
\hspace*{-.3 cm}\epsilon_n^{**}(t)= &\epsilon_n(t) +  \sum_{i=1}^{q} \sum_{j=p+1}^{\infty} \psi_{i,j} w_i(t) \int_0^1 v_j(s) X_n(s)\ds +  \sum_{i=q+1}^{\infty} \sum_{j=1}^{\infty} \psi_{i,j} w_i(t) \int_0^1 v_j(s) X_n(s)\ds\\
 &- \sum_{i=1}^{q} \sum_{j=1}^{p} \hat{d}_{i,N} \psi_{i,j} \hat{c}_{j,N} \hat{w}_{i,N}(t) \int_0^1 \hat{v}_{j,N}(s) X_n(s)\ds + \sum_{i=1}^{q} \sum_{j=1}^{p} \psi_{i,j} w_i(t) \int_0^1 v_j(s) X_n(s)\ds.
 \end{aligned}
$$

By projecting both sides of (\ref{eq:Y-estimated}) onto the space spanned by the functions $\{\hat{w}_{j,N}(t),\ 1\le j\le q\}$, we can replace the linear model (\ref{eq:naiveprojection}) with the empirical linear model
\begin{equation}\label{eq:projection}
\begin{split}
   \begin{pmatrix}
\la Y_n, \hat{w}_{1,N} \ra \\
\la Y_n, \hat{w}_{2,N} \ra \\
\vdots\\
 \la Y_n,  \hat{w}_{q,N}\ra \\
\end{pmatrix}
=
\begin{pmatrix}
\hat{d}_{1,N}\psi_{1,1}\hat{c}_{1,N} & \hat{d}_{1,N}\psi_{1,2}\hat{c}_{2,N} & \dots & \hat{d}_{1,N}\psi_{1,p}\hat{c}_{p,N}\\
\hat{d}_{2,N}\psi_{2,1}\hat{c}_{1,N} & \hat{d}_{2,N}\psi_{2,2}\hat{c}_{2,N} & \dots & \hat{d}_{2,N}\psi_{2,p}\hat{c}_{p,N}\\
\vdots&\vdots &\dots &\vdots\\
\hat{d}_{q,N}\psi_{q,1}\hat{c}_{1,N} & \hat{d}_{q,N}\psi_{q,2}\hat{c}_{2,N} & \dots & \hat{d}_{q,N}\psi_{q,p}\hat{c}_{p,N}\\
\end{pmatrix}
\begin{pmatrix}
    \la X_n, \hat{v}_{1,N} \ra \\
    \la X_n, \hat{v}_{2,N} \ra \\
    \vdots\\
    \la X_n, \hat{v}_{p,N} \ra \\
\end{pmatrix}
\vspace{.3 cm}\\
\hspace*{-10 cm}+
\begin{pmatrix}
    \la \epsilon_n^{**}, \hat{w}_{1,N} \ra \\
   \la  \epsilon_n^{**}, \hat{w}_{2,N}\ra  \\
    \vdots\\
  \la   \epsilon_n^{**}, \hat{w}_{q,N}\ra
\end{pmatrix}.
\end{split}
\end{equation}

%The number of observations $N$ is determined and fixed.  Because $N$ does not change during the testing procedure,
The signs $\{\hat{d}_{i,N},\ 1\le i \le q\}$ and $\{\hat{c}_{j,N},\ 1\le j \le p\}$ are computed from $X_1,X_2, \ldots ,X_N$ and $Y_1,Y_2, \ldots ,Y_N$ and  they    will not change during the testing procedure.  Therefore, testing the stability of $\{\hat{d}_{i,N} \psi_{i,j} \hat{c}_{j,N},\ 1\le i \le q, 1\le j \le p\}$ is equivalent to testing the stability of $\{ \psi_{i,j} ,\ 1\le i \le q, 1\le j \le p\}$.\\

Letting $\otimes$ be the Kronecker product, we can express equation (\ref{eq:projection}) in a more condensed form:

\begin{equation}\label{eq:model-observations}
\hat{\mathbf Y}{(n)}=\hat{\mathbf Z}{(n)} {\boldsymbol \beta}+\hat{\boldsymbol \Delta}{(n)},\text{\phantom{space here}} 1\le n \le N,
\end{equation}

where

$$
\begin{aligned}
\hat{\mathbf Y}{(n)}&=
\begin{pmatrix}
\la  Y_n, \hat{w}_{1,N} \ra   \\
\la Y_n, \hat{w}_{2,N} \ra    \\
\vdots\\
\la Y_n,\ \hat{w}_{q,N}  \ra   \\
\end{pmatrix},
\quad
%\vspace*{.3 cm} \\
\hat{\boldsymbol \Delta}{(n)} =
\begin{pmatrix}
  \la   \epsilon_n^{**}, \hat{w}_{1,N} \ra  \\
 \la   \epsilon_n^{**}, \hat{w}_{2,N}   \ra \\
    \vdots\\
 \la   \epsilon_n^{**}, \hat{w}_{q,N}   \ra \\
\end{pmatrix},
\vspace*{.3 cm}\\
{\boldsymbol \beta} &= \begin{pmatrix}
\hat{d}_{1,N} \psi_{1,1} \hat{c}_{1,N}\\
\vdots\\
\hat{d}_{1,N} \psi_{1,p} \hat{c}_{p,N}\\
\hat{d}_{2,N} \psi_{2,1} \hat{c}_{1,N}\\
\vdots\\
\hat{d}_{q,N} \psi_{q,p} \hat{c}_{p,N}\\
\end{pmatrix} = \vectorization(\{\hat{d}_{i,N} \psi_{i,j} \hat{c}_{j,N},\ 1\le i \le q, 1\le j \le p\}^{T}),
\end{aligned}
$$
and
$$
\hat{\mathbf Z}{(n)}={\mathbf I}_q\otimes \hat{\mathbf M}(n)\;\;\;\;\mbox{with}\;\;\;\;
\hat{\mathbf M}(n) = \left( \la X_n, \hat{v}_{1,N} \ra,\dots, \la X_n, \hat{v}_{p,N} \ra  \right).
$$
The least squares estimator for $\bbeta$ is defined by
$$
\hat{\bbeta}_N = \left(\hat{\mathbf Z}_{N}^T\hat{\mathbf Z}_{N}\right)^{-1}\hat{\mathbf Z}_{N}^T\hat{\mathbf Y}_{N},
$$
where the  vectors $\hat{\mathbf Y}_{\lfloor Nt \rfloor}$ and the  matrices
$\hat{\mathbf Z}_{\lfloor Nt \rfloor}$ for each $t\in [0,1]$ are defined by
$$
\hat{\mathbf Y}_{\lfloor Nt \rfloor} =
\begin{pmatrix}
\hat{\mathbf Y}{(1)}\\
\hat{\mathbf Y}{(2)}\\
\vdots\\
\hat{\mathbf Y}{({\lfloor Nt \rfloor})}\\
\end{pmatrix}
\text{\quad}\text{ and }\text{\quad}
\hat{\mathbf Z}_{\lfloor Nt \rfloor} =
\begin{pmatrix}
\hat{\mathbf Z}{(1)}\\
\hat{\mathbf Z}{(2)}\\
\vdots\\
\hat{\mathbf Z}{({\lfloor Nt \rfloor})}
\end{pmatrix}.
$$
Our testing procedure is based on the cumulative sums process of the weighted residuals,
\begin{equation}\label{eq:cumsum}
\tilde{\bV}_N(t)
=N^{-1/2}\left[\sum_{n=1}^{\lfloor Nt \rfloor} \hat{\mathbf Z}^T{(n)} \tilde{\mathbf Y}{(n)} - t \sum_{n=1}^{N} \hat{\mathbf Z}^T{(n)} \tilde{\mathbf Y}{(n)}\right],
\;\;\; t\in [0,1],
\end{equation}
where $\tilde{\mathbf Y}{(n)}= \hat{\mathbf Y}{(n)}- \hat{\mathbf Z}{(n)} \hat{\boldsymbol \beta}_N, 1\leq n \leq N$ stands for the residuals.

\section{Main Results}\label{s:main}
In this section we formally state all of the assumptions that we need and then we state our main theorem.  Throughout this paper we use $\left|\cdot \right|$ to mean the absolute value of a scalar or the largest of the absolute values of the elements of a vector or matrix. It will always be clear from the context  which is meant.\\

Our first condition means that the processes $X_n$ and $\epsilon_n$  are Bernoulli shifts:
\begin{Assumption}\label{a:shifts} %bernoulli shifts.
$X_n(t)$ and $\epsilon_n(t)$ can be expressed as
$$
X_n(t)=a({\boldsymbol \eta}_n(t), {\boldsymbol \eta}_{n-1}(t), \dots) \text{ \ and \ } \epsilon_n(t)=b({\boldsymbol \eta}_n(t), {\boldsymbol \eta}_{n-1}(t), \dots),
$$
for some functionals $a$ and $b$ where $\{{\boldsymbol \eta}_k, -\infty < k < \infty\}$ are iid vector-valued random functions.
\end{Assumption}

Assumption \ref{a:shifts} implies immediately that the vector-valued process $(X_n,\epsilon_n), 1\leq n <\infty$ is stationary and ergodic. If $H_0$ holds, then
$(X_n,\epsilon_n, Y_n), 1\leq n <\infty$ is also stationary and ergodic. We also require  that the processes have at least 4 moments:

\begin{Assumption}\label{a:Xmoments}
\begin{equation}\label{e:eps0}
\E X_n(t)=0 \quad \mbox{and} \quad \E\epsilon_n(t)=0,
\end{equation}
\begin{equation}\label{e:eps4}
\int_0^1 \E X_n^4(t) \dt < \infty
\quad
\mbox{and}
\quad
\int_0^1 \E \epsilon_n^4(t) \dt < \infty.
\end{equation}
\end{Assumption}

\begin{Assumption}\label{a:uncorrelated}
$X_n(t)$ and  $\epsilon_n(s)$ are uncorrelated, i.e. $\E X_n(t)\epsilon_n(s)=0$ for all $0\leq t,s \leq 1.$
\end{Assumption}

Under assumption \ref{a:shifts} one can even  have  long-range dependence among the observations. However, in this paper we are only interested in weakly dependent sequences which is stated in the next assumption:
\begin{Assumption}\label{a:k-dependent}
We assume that
\begin{equation}\label{e:dep-1}
\sum_{1\leq k<\infty}\left(\E\int_0^1\left(X_n(t)-X_{n}^{(k)}(t)\right)^4 \dt \right)^{1/4}<\infty
\end{equation}
and
\begin{equation}\label{e:dep-2}
\sum_{1\leq k<\infty}\left(\E\int_0^1\left(\epsilon_n(t)-\epsilon_{n}^{(k)}(t)\right)^4 \dt \right)^{1/4} < \infty
\end{equation}
with
$$
X_{n}^{(k)}(t)=a({\boldsymbol \eta}_n(t), {\boldsymbol \eta}_{n-1}(t), \dots, {\boldsymbol \eta}_{n-k+1}(t), {\boldsymbol \eta}_{n,n-k}^{(k)}(t), {\boldsymbol \eta}_{n,n-k-1}^{(k)}(t),\dots  )
$$
and
$$
\epsilon_{n}^{(k)}(t)=b({\boldsymbol \eta}_n(t), {\boldsymbol \eta}_{n-1}(t), \dots, {\boldsymbol \eta}_{n-k+1}(t), {\boldsymbol \eta}_{n,n-k}^{(k)}(t), {\boldsymbol \eta}_{n,n-k-1}^{(k)}(t),\dots  ),
$$
where $\{{\boldsymbol \eta}_{n,\ell}^{(k)}, -\infty < k,\ell,n < \infty\}$ are iid copies of ${\boldsymbol \eta}_0$.
\end{Assumption}

We note that, due to stationarity required by Assumption \ref{a:shifts}, it is enough to assume that (\ref{e:dep-1}) and (\ref{e:dep-2}) hold for at least one $n$. H\"ormann and Kokoszka (2010) call the processes satisfying Assumption
\ref{a:k-dependent} $L^4$-$k$-decomposable processes. This property appeared first in Ibragimov (1962) and is used several times in Billingsley (1968) in case of random variables on the line. Aue et al (2009) use an analogue of Assumption \ref{a:k-dependent} for random vectors when they derive tests to detect a change in the covariance structure of the observations. Wied at al (2011) investigate the change in the correlation under the same assumptions as in Aue at al (2009).  Aue et al (2011) provide several examples when   Assumptions  \ref{a:shifts} and
\ref{a:k-dependent} hold. For example, autoregressive, moving-average, linear
processes in Hilbert spaces satisfy this condition.  Also, the non-linear functional ARCH(1) model (cf.\ H\"ormann et al (2012)) and bilinear models (cf.\   H\"ormann and Kokoszka (2010) also satisfy Assumption \ref{a:k-dependent}. \\

Our next assumption ensures  that the $p$ and $q$ largest eigenvalues of  $C$ and $D$, respectively, are unique.
\begin{Assumption}\label{a:eigenu}
$$
\lambda_1>\lambda_2>\dots>\lambda_{p+1}
$$
and
$$
\tau_1>\tau_2>\dots>\tau_{q+1}.
$$
\end{Assumption}

\begin{Assumption}\label{a:PSImoments}
$$
\int_0^1 \int_0^1 \Psi^4(s,t) \dt \ds < \infty.
$$
\end{Assumption}

\medskip
We note that under Assumptions \ref{a:Xmoments} and \ref{a:PSImoments} we also have that $\E Y_n(t)=0$ and $\int_0^1\E Y_n^4(t)\dt <\infty.$
Let
$$
    {\boldsymbol \gamma}_{\ell} =  \vectorization \left( \{ \gamma_{\ell}(i,j), 1\le i\le q, 1\le j\le p \}^T \right),
$$
where
$$
\gamma_{\ell}(i,j)=\la X_{\ell}, v_j\ra \la \epsilon_{\ell},w_i\ra  + \la X_{\ell},v_j\ra \la X_{\ell},u_i\ra,
$$
and
$$
    u_i(s)=\sum_{r=p+1}^{\infty} \psi_{i,r}v_r(s),\ \ 1\le i \le q.
$$

Define ${\boldsymbol \Sigma}$ as
$$
    {\boldsymbol \Sigma}=\E{\boldsymbol \gamma}_0{\boldsymbol \gamma}_0^T + \sum_{l=1}^{\infty} \E{\boldsymbol \gamma}_0 {\boldsymbol \gamma}_{\ell}^T + \sum_{l=1}^{\infty} \E {\boldsymbol \gamma}_{\ell} {\boldsymbol \gamma}_0^T.
$$
We now define our detector as
$$
    V_N(t)= \tilde{{\mathbf V}}_N^T(t) \breve{\boldsymbol \Sigma}^{-1}_N \tilde{\mathbf V}_N(t),\\
$$
where $\tilde{\bV}_N(t)$ is defined in \eqref{eq:cumsum}
and $\breve{\boldsymbol \Sigma}_N$ is an estimator (up to random signs) for ${\boldsymbol \Sigma}$.  The Bartlett-type estimator that we propose for $\breve{\boldsymbol \Sigma}_N$ is a function of the estimators $\hat{v}_{j,N}(t)$ and $\hat{w}_{i,N}(t)$, which are estimators for $v(t)$ and $w(t)$ up to random signs.  For this reason, we cannot expect that $\breve{\boldsymbol \Sigma}_N$ will be close to ${\boldsymbol \Sigma}$.  The best we can expect is that ${\boldsymbol \zeta}_N \breve{{\boldsymbol \Sigma}}_N {\boldsymbol \zeta}_N$ will be close to ${\boldsymbol \Sigma}$, where ${\boldsymbol \zeta}_N$ is a matrix corresponding to the random signs, $\hat{c}_{j,N}$ and  $\hat{d}_{i,N}$.  This is described in assumption \ref{a:consistent}.

\medskip
Next we introduce the diagonal matrices $\hat{{ \bf C}}_N$ and $\hat{{\bf D}}_N$ which consists of the random signs, i.e. $\hat{{\bf C}}_N=\mbox{diag}(\hat{c}_{1,N}, \ldots,\hat{c}_{p,N} )$,  $\hat{{\bf D}}_N=\mbox{diag}(\hat{d}_{1,N}, \ldots,\hat{d}_{q,N}) $ and
${\boldsymbol \zeta}_N=\hat{{\bf D}}_N \otimes\hat{{\bf C}}_N$.
\begin{Assumption}\label{a:consistent}
    $\hat{{\boldsymbol \Sigma}}_N={\boldsymbol \zeta}_N \breve{{\boldsymbol \Sigma}}_N {\boldsymbol \zeta}_N$ is an estimator for ${\boldsymbol \Sigma}$ such that
$$
\left|\hat{{\boldsymbol \Sigma}}_N- {\boldsymbol \Sigma}\right| = \op{1}.
$$
\end{Assumption}

Note in particular that
$$
{\boldsymbol \zeta}_N {\boldsymbol \gamma}_{\ell}= \vectorization \left( \{ \hat{c}_{j,N}\hat{d}_{i,N} \gamma_{\ell}(i,j), 1\le i\le q, 1\le j\le p \}^T \right).
$$
Note also that Assumption \ref{a:consistent} and the continuous mapping theorem combined imply that $\hat{\boldsymbol \Sigma}_N^{-1} = {\boldsymbol \zeta}_N\breve{\boldsymbol \Sigma}_N^{-1}{\boldsymbol \zeta}_N \inP {\boldsymbol \Sigma}^{-1}$.\\  % This will be used in \ref{eq:finale}.\\

Although any estimator satisfying Assumption  \ref{a:consistent} can be used, we recommend using a Bartlett-type estimator as  $\breve{\boldsymbol \Sigma}_N$, which we will describe in section \ref{s:bart}.
\begin{Theorem}\label{Th:main} If Assumptions \ref{a:shifts}-\ref{a:consistent} hold, then we have
    $$
        V_N(t) \inD \sum_{\ell=1}^{pq} {\mathcal B}_{\ell}^2(t),
    $$
    where $\{{\mathcal B}_{\ell}(t), \ell=1,\dots, pq\}$ are iid standard Brownian bridges.
\end{Theorem}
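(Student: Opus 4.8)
The plan is to reduce $V_N(t)$ to a quadratic form in a centered partial-sum process of the $pq$-vectors $\bgamma_n$ and then apply a functional central limit theorem. First I would exploit the normal equations: since $\hat{\bbeta}_N$ is the least-squares estimator, $\sum_{n=1}^{N}\hat{\mathbf Z}^T(n)\tilde{\mathbf Y}(n)=\hat{\mathbf Z}_N^T(\hat{\mathbf Y}_N-\hat{\mathbf Z}_N\hat{\bbeta}_N)=\mathbf 0$, so the centering term in \eqref{eq:cumsum} vanishes and $\tilde{\bV}_N(t)=N^{-1/2}\sum_{n=1}^{\lfloor Nt\rfloor}\hat{\mathbf Z}^T(n)\tilde{\mathbf Y}(n)$. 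Writing the residuals through \eqref{eq:model-observations} as $\tilde{\mathbf Y}(n)=\hat{\bDelta}(n)-\hat{\mathbf Z}(n)(\hat{\bbeta}_N-\bbeta)$, setting $\mathbf S_N(t)=N^{-1/2}\sum_{n=1}^{\lfloor Nt\rfloor}\hat{\mathbf Z}^T(n)\hat{\bDelta}(n)$ and $\hat{\mathbf A}_N(t)=N^{-1}\sum_{n=1}^{\lfloor Nt\rfloor}\hat{\mathbf Z}^T(n)\hat{\mathbf Z}(n)$, and using $N^{1/2}(\hat{\bbeta}_N-\bbeta)=\hat{\mathbf A}_N(1)^{-1}\mathbf S_N(1)$, I obtain
$$\tilde{\bV}_N(t)=\mathbf S_N(t)-\hat{\mathbf A}_N(t)\,\hat{\mathbf A}_N(1)^{-1}\,\mathbf S_N(1).$$
Because $\hat{\mathbf Z}^T(n)\hat{\mathbf Z}(n)=\mathbf I_q\otimes(\hat{\mathbf M}(n)^T\hat{\mathbf M}(n))$, Theorem \ref{Th:eigen}, the ergodicity granted by Assumption \ref{a:shifts}, and the moment bounds of Assumption \ref{a:Xmoments} give $\hat{\mathbf A}_N(t)\inP t(\mathbf I_q\otimes\boldsymbol\Lambda)$ uniformly on $[0,1]$, with $\boldsymbol\Lambda=\mathrm{diag}(\lambda_1,\dots,\lambda_p)$; the random signs cancel on the diagonal, so the limit is sign-free and, by Assumption \ref{a:eigenu}, invertible. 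Hence $\hat{\mathbf A}_N(t)\,\hat{\mathbf A}_N(1)^{-1}\inP t\,\mathbf I_{pq}$ uniformly.

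The central and most delicate step is to replace the estimated projections $\hat v_{j,N},\hat w_{i,N}$ by the true eigenfunctions. Noting that $\hat{\mathbf Z}^T(n)\hat{\bDelta}(n)$ has entries $\la X_n,\hat v_{j,N}\ra\la\epsilon_n^{**},\hat w_{i,N}\ra$, and recalling that with the true eigenfunctions $\la\epsilon_n^*,w_i\ra=\la\epsilon_n,w_i\ra+\la X_n,u_i\ra$, so that $\la X_n,v_j\ra\la\epsilon_n^*,w_i\ra=\gamma_n(i,j)$, I would establish, writing $\mathbf G_N(t)=N^{-1/2}\sum_{n=1}^{\lfloor Nt\rfloor}\bgamma_n$,
$$\mathbf S_N(t)=\boldsymbol\zeta_N\,\mathbf G_N(t)+t\,\mathbf r_N+\op{1}\qquad\text{uniformly in }t,$$
where $\mathbf r_N=\Op{1}$. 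Substituting $\hat w_{i,N}=\hat d_{i,N}w_i+(\hat w_{i,N}-\hat d_{i,N}w_i)$ and likewise for $\hat v_{j,N}$, the leading term reproduces $\hat c_{j,N}\hat d_{i,N}\gamma_n(i,j)$, i.e.\ $\boldsymbol\zeta_N\bgamma_n$, while the cross terms are controlled by Cauchy--Schwarz together with the $\Op{N^{-1/2}}$ eigenfunction rates of Theorem \ref{Th:eigen} and the $L^4$ moment and weak-dependence bounds of Assumptions \ref{a:Xmoments} and \ref{a:k-dependent}. The main obstacle is precisely that a crude bound on these cross terms is only $\Op{1}$, not $\op{1}$; what rescues the argument is that the non-negligible part is asymptotically linear in $t$ (a constant per-summand bias of order $N^{-1/2}$), captured by $t\,\mathbf r_N$, which cancels under the CUSUM/residual centering. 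Combining with the first paragraph, $t\,\mathbf r_N-t\,\mathbf I_{pq}\,\mathbf r_N=0$ and
$$\tilde{\bV}_N(t)=\boldsymbol\zeta_N\big[\mathbf G_N(t)-t\,\mathbf G_N(1)\big]+\op{1}\qquad\text{uniformly in }t.$$

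Next I would pass to the limit for $\mathbf G_N$. One checks $\E\bgamma_0=\mathbf 0$: the first part of $\gamma_0(i,j)$ vanishes by Assumption \ref{a:uncorrelated} and the second by orthogonality of $v_j$ to $u_i$ for $j\le p$. Moreover $\{\bgamma_n\}$ is stationary and inherits $L^2$-$m$-approximability from Assumptions \ref{a:shifts} and \ref{a:k-dependent} through the continuity of the bilinear maps defining $\bgamma_n$ (each factor being $L^4$-decomposable). The functional CLT for such weakly dependent sequences then yields $\mathbf G_N(\cdot)\inD\mathbf W(\cdot)$ in $D[0,1]^{pq}$, where $\mathbf W$ is a Brownian motion with covariance $\boldsymbol\Sigma$, so that $\mathbf G_N(t)-t\,\mathbf G_N(1)\inD\mathbf B(t):=\mathbf W(t)-t\,\mathbf W(1)$, a $\boldsymbol\Sigma$-Brownian bridge.

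Finally, by Assumption \ref{a:consistent} and the note following it, $\boldsymbol\zeta_N\breve{\boldsymbol\Sigma}_N^{-1}\boldsymbol\zeta_N=\hat{\boldsymbol\Sigma}_N^{-1}\inP\boldsymbol\Sigma^{-1}$, and in particular $\breve{\boldsymbol\Sigma}_N^{-1}=\Op{1}$. Using $\boldsymbol\zeta_N^T=\boldsymbol\zeta_N$, $\boldsymbol\zeta_N^2=\mathbf I$, and the tightness of $\tilde{\bV}_N$ to absorb the $\op{1}$,
$$V_N(t)=\big[\mathbf G_N(t)-t\,\mathbf G_N(1)\big]^T\,\hat{\boldsymbol\Sigma}_N^{-1}\,\big[\mathbf G_N(t)-t\,\mathbf G_N(1)\big]+\op{1}\inD\mathbf B(t)^T\boldsymbol\Sigma^{-1}\mathbf B(t),$$
by the continuous mapping theorem and Slutsky's lemma. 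Writing $\mathbf B(t)=\boldsymbol\Sigma^{1/2}\tilde{\mathbf B}(t)$ with $\tilde{\mathbf B}$ a vector of $pq$ independent standard Brownian bridges, the quadratic form collapses to $\tilde{\mathbf B}(t)^T\tilde{\mathbf B}(t)=\sum_{\ell=1}^{pq}\mathcal B_\ell^2(t)$, which is the assertion. I expect the replacement step of the second paragraph, and in particular isolating the linear-in-$t$ bias so that it cancels, to be the crux of the argument.
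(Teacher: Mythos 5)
Your proposal is correct and takes essentially the same route as the paper: your identity $\tilde{\bV}_N(t)=\mathbf S_N(t)-\hat{\mathbf A}_N(t)\,\hat{\mathbf A}_N(1)^{-1}\,\mathbf S_N(1)$ is the paper's decomposition \eqref{eq:star} in disguise, your uniform limit for $\hat{\mathbf A}_N(t)$ is Lemma \ref{l:convergetozero}, your replacement step with the linear-in-$t$ drift $t\,\mathbf r_N$ is precisely the content of Lemmas \ref{l:epsilon}--\ref{l:combining} (with $\mathbf r_N=N^{1/2}{\mathbf R}_N$), and your concluding FCLT, sign-cancellation and Slutsky steps mirror Theorem \ref{aueapp} and the paper's final argument. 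The only cosmetic difference is that you invoke the normal equations to drop the centering term and substitute the closed form of $N^{1/2}(\hat{\bbeta}_N-\bbeta)$, whereas the paper carries $(\bbeta-\hat{\bbeta}_N)\sqrt{N}$ as a separate $\Op{1}$ factor (Lemma \ref{l:betahat}); the two bookkeepings are algebraically equivalent.
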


The testing procedure can be based on Theorem \ref{Th:main}, using functionals of  $V_N(t)$. The distribution of functionals of the limit was considered by Kiefer (1959) who provided formulae for the distribution functions of the supremum and $L^2$ functionals of the limit. For tables, approximations and further discussion on the distribution of functionals of the limit we refer to Aue et al (2009).

\section{Bartlett-type estimators}\label{s:bart}

In this section we discuss the estimation of the long-run covariance matrix of the sums of weakly dependent vectors. We start with estimators based on the sequence $\gamma_\ell, 1\leq \ell\leq N$. Since ${\boldsymbol\Sigma}$ is the spectral density at 0, the kernel-type  estimators discussed in Grenander and Rosenblatt (1957),  Anderson (1971), Brillinger (1975), Priestley (1981) and Rosenblatt (1985) can be used. The estimator is defined by
$$
\tilde{\boldsymbol \Sigma}_N = \sum_{k=-(N-1)}^{N-1} K(k/B_N) {\boldsymbol \phi}_{k,N},
$$
where
$$
{\boldsymbol \phi}_{k,N}= \frac{1}{N} \sum_{\ell=\max(1,1-k)}^{\min(N,N-k)} {\bgamma}_{\ell}{\bgamma}_{\ell+k}^T.
$$
The kernel $K$ satisfies the following condition:
\begin{Assumption}\label{a:K}
$$
\begin{aligned}
&(i)\;\; K(0)=1\\
&(ii)\;\; K\;\mbox{is a symmetric, Lipschitz function} \\
&(iii) \;\; K \;\mbox{has a  bounded support} \\
&(iv)\;\; \hat{K}, \;\mbox{the Fourier transform of}\; K,\;\mbox{is also Lipschitz and  integrable}
\end{aligned}
$$
\end{Assumption}
These conditions are mild, and they are satisfied by the most commonly  used kernels, like the triangle of Bartlett and the polynomial kernel of Parzen (1961, 1967).  Assumption \ref{a:K}(iii) makes the present proofs relatively technically simple and it could be replaced with the assumption that $K(x)$ decays sufficiently fast as $|x|\to \infty$. The next assumption is standard in the estimation of spectral densities and long term variances and covariances.
\begin{Assumption}\label{a:kernel}
$$
B_N\rightarrow\infty\;\;\;\mbox{and}\;\;\;B_N/N\rightarrow 0,\;\;\;\mbox{as}\;\;\;N\to \infty.
$$
\end{Assumption}
Jansson (2002) proved the consistency of covariance estimation for linear processes under the assumption $B_N=o(N^{1/2}).$ Similarly, H\"ormann and Kokoszka (2010) obtained consistency results for the estimation of the long run covariance matrices of the projections of functional observations assuming  $B_N=o(N^{1/2})$. Liu and Wu (2010) established consistency results for estimation of spectral densities under Assumption \ref{a:kernel}.

\begin{Theorem}\label{Th:bart-1} If Assumptions \ref{a:shifts}-\ref{a:k-dependent}, \ref{a:PSImoments},  \ref{a:K} and
\ref{a:kernel} hold, then
$$
\tilde{\boldsymbol \Sigma}_N\convP {\boldsymbol \Sigma}.
$$
\end{Theorem}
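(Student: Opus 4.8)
The plan is to argue directly with the sequence $\bgamma_\ell$ that generates $\tilde{\boldsymbol\Sigma}_N$, splitting the estimator into its mean (the bias) and a centered fluctuation. First I would record the structural facts about $\bgamma_\ell$ on which everything rests. By Assumption~\ref{a:uncorrelated} the term $\E\la X_\ell,v_j\ra\la\epsilon_\ell,w_i\ra$ vanishes, and since $\E\la X_\ell,v_j\ra\la X_\ell,v_r\ra=\lambda_r\delta_{jr}$ is $0$ whenever $j\le p<r$, the term $\E\la X_\ell,v_j\ra\la X_\ell,u_i\ra$ vanishes as well; hence $\E\bgamma_\ell=0$. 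Writing $\boldsymbol\Gamma_k=\E\bgamma_0\bgamma_k^T$, stationarity gives $\E\bgamma_\ell\bgamma_0^T=\boldsymbol\Gamma_{-\ell}$, so that ${\boldsymbol\Sigma}=\sum_{k=-\infty}^{\infty}\boldsymbol\Gamma_k$. Each scalar sequence $\la X_\ell,v_j\ra$, $\la\epsilon_\ell,w_i\ra$, $\la X_\ell,u_i\ra$ inherits $L^4$-$m$-approximability from Assumption~\ref{a:k-dependent} (Cauchy--Schwarz passes the $L^4$ bound on $X_\ell-X_\ell^{(k)}$ to the inner products), and a product of two $L^4$-approximable scalar sequences is $L^2$-approximable; thus $\bgamma_\ell$ is a mean-zero, stationary, $L^2$-$m$-approximable sequence with two finite moments. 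In particular $\sum_k|\boldsymbol\Gamma_k|<\infty$, so ${\boldsymbol\Sigma}$ is well defined.

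Since $\E\boldsymbol\phi_{k,N}=\frac{N-|k|}{N}\boldsymbol\Gamma_k$ for $|k|<N$, the bias is
\[
\E\tilde{\boldsymbol\Sigma}_N-{\boldsymbol\Sigma}
=\sum_{|k|<N}\Big(K(k/B_N)\tfrac{N-|k|}{N}-1\Big)\boldsymbol\Gamma_k
-\sum_{|k|\ge N}\boldsymbol\Gamma_k .
\]
Because $K(0)=1$ and $K$ is Lipschitz (Assumption~\ref{a:K}) while $B_N\to\infty$ (Assumption~\ref{a:kernel}), for each fixed $k$ the factor $K(k/B_N)(N-|k|)/N\to1$ and stays bounded; dominated convergence against the summable majorant $|\boldsymbol\Gamma_k|$ then drives the bias to $0$. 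It remains to show that the centered part $\tilde{\boldsymbol\Sigma}_N-\E\tilde{\boldsymbol\Sigma}_N\convP0$.

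The fluctuation is where the real difficulty sits, and it is worth being explicit about why. The natural route is Chebyshev, that is, to bound $\Var(\tilde{\boldsymbol\Sigma}_N)$; but each coordinate of $\boldsymbol\phi_{k,N}$ is an average of products $\gamma_\ell^{(a)}\gamma_{\ell+k}^{(b)}$, so its variance involves fourth-order moments of $\bgamma_\ell$, equivalently eighth-order moments of $(X_\ell,\epsilon_\ell)$, which are not guaranteed by the four-moment Assumption~\ref{a:Xmoments} (and, $X$ and $\epsilon$ being only uncorrelated, the mixed moments need not factor). Thus $\Var(\tilde{\boldsymbol\Sigma}_N)$ may fail to be finite, and this moment gap is the main obstacle; it is also the reason the statement asserts only convergence in probability. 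I would resolve it in two stages. For the dependence, replace $\bgamma_\ell$ by its $m$-dependent approximation $\bgamma_\ell^{(m)}$, bounding the replacement error in $L^2$ uniformly in $N$ by the tails of the series in Assumption~\ref{a:k-dependent} and checking that the long-run covariance of $\bgamma^{(m)}$ tends to ${\boldsymbol\Sigma}$ as $m\to\infty$. For the missing moments, truncate at a slowly growing level, $\bgamma_\ell\mathbf 1\{|\bgamma_\ell|\le c_N\}$: the truncated $m$-dependent sequence has the moments needed to run the classical variance estimate, giving an $O(B_N/N)$ bound that vanishes by Assumption~\ref{a:kernel}, while the discarded tail is negligible in probability by uniform integrability of $|\bgamma_0|^2$. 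Equivalently---and this is what the Fourier-transform hypotheses of Assumption~\ref{a:K}(iv) are for---one may represent $\tilde{\boldsymbol\Sigma}_N$ as a smoothed matrix periodogram $\frac{B_N}{2\pi}\int\hat K(B_N\omega)\,I_N(\omega)\,d\omega$ and invoke the in-probability consistency of spectral-density estimates at frequency zero due to Liu and Wu (2010); it is this spectral representation that lets the weak bandwidth condition $B_N/N\to0$ replace the stronger $B_N=o(N^{1/2})$.
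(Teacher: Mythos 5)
Your preliminary structure is sound --- the mean-zero property of $\bgamma_\ell$, the summability $\sum_k|\E\bgamma_0\bgamma_k^T|<\infty$ via \eqref{eq:aue}, the bias computation by dominated convergence, and above all the diagnosis that the direct Chebyshev route fails because fourth moments of $\bgamma_\ell$ (eighth moments of $X_\ell,\epsilon_\ell$) are unavailable; indeed the paper remarks after Theorem \ref{Th:bart-1} that its proof needs only a zero-mean Bernoulli shift with two moments satisfying \eqref{eq:aue}. The genuine gap is the truncation step you offer to supply the missing moments. The discarded tail is \emph{not} negligible ``by uniform integrability'': it enters once for every lag $|k|\le B_N$, so writing $\epsilon(c)=\E\left[|\bgamma_0|^2\, 1\{|\bgamma_0|>c\}\right]$, the replacement error in the estimator is of order $B_N\,\epsilon(c_N)^{1/2}$, which forces $\epsilon(c_N)=o(B_N^{-2})$. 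On the other side, the constant in the ``classical variance estimate'' is a fourth moment, and for the truncated variables this is only bounded by $c_N^2\,\E|\bgamma_0|^2$; however one organizes the computation, the variance bound degrades with $c_N$ (it is $O(B_N c_N^2/N)$, not $O(B_N/N)$), so one needs at least $c_N=o(N^{1/2})$. Since Assumption \ref{a:Xmoments} gives $\epsilon(c)\to0$ with \emph{no rate}, these two demands can be incompatible: if $\epsilon(c)\sim 1/\log\log c$ and $B_N=\log N$, the bias requirement forces $c_N\ge\exp\left(\exp\left((\log N)^2\right)\right)$, far beyond $N^{1/2}$. (Note also that ``slowly growing $c_N$'' points the wrong way: the bias requirement wants $c_N$ large.) So the truncation route does not prove the theorem in the generality of Assumptions \ref{a:Xmoments} and \ref{a:kernel}.

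The paper closes this step without truncation by exploiting the $m$-dependence itself. In Lemma \ref{l:m-dep}, the problematic lags are collected as $\sum_{k=m+1}^{B_N}K(k/B_N){\boldsymbol \phi}^{(m)}_{k,N}=\sum_\ell \bgamma^{(m)}_\ell{\mathbf H}^{(m)}_{\ell,N}$, where ${\mathbf H}^{(m)}_{\ell,N}$ involves only $\bgamma^{(m)}_{\ell+k}$ with $k>m$ and is therefore \emph{independent} of $\bgamma^{(m)}_\ell$. Cross terms with $|r-\ell|>m$ then vanish exactly (independence plus zero mean), and the $O(Nm)$ near-diagonal terms are handled by Cauchy--Schwarz followed by the factorization $\E\bigl({H}^{(m)}_{\ell,N}(j)\gamma^{(m)}_\ell(i)\bigr)^2=\E\bigl({H}^{(m)}_{\ell,N}(j)\bigr)^2\,\E\bigl(\gamma^{(m)}_\ell(i)\bigr)^2=O(B_N/N^2)\cdot O(1)$ --- second moments only, yielding the $O(B_N/N)$ bound you wanted without any truncation. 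The passage from $\bgamma^{(m)}_\ell$ to $\bgamma_\ell$ is then done in the frequency domain: Lemmas \ref{l:m-ssum} and \ref{l:justwhatweneeded} prove $\sup_t\E|{\mathbf I}_N(t)-{\mathbf I}^{(m)}_N(t)|\to0$ by second-moment computations from \eqref{eq:aue}, and the inversion $\tilde{\boldsymbol \Sigma}_N=\int\hat K(u)\,{\mathbf I}_N(-u/B_N)\du$ transfers this to the estimator --- exactly the role you correctly assign to Assumption \ref{a:K}(iv). Your ``equivalent'' suggestion of citing Liu and Wu (2010) at frequency zero is an appeal to an external theorem rather than a proof: their hypotheses are phrased through physical dependence measures (single-innovation coupling) and would have to be verified for the vector sequence $\bgamma_\ell$ built from the $L^4$-$m$-approximable pair $(X_\ell,\epsilon_\ell)$; the paper does not take that route, and your proposal does not carry out that verification.
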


\medskip
We would like to point out that the proof of Theorem \ref{Th:bart-1} only requires that $\gamma_\ell$ is a Bernoulli shift with zero mean and  finite second moment for which (\ref{eq:aue}) holds.\\

The estimator, $\tilde{\boldsymbol \Sigma}_N$, cannot be computed since the variables $\bgamma_\ell$ are not observed directly and we need to estimate them from the sample. We have estimators for $v_j$ as well as for $w_i$, but we will also need an estimator for $\epsilon_\ell$. We use the residuals to get inference on $\epsilon_\ell$:
$$
\hat{\epsilon}_\ell(t)=Y_\ell(t)-\sum_{i=1}^q\sum_{j=1}^p\hat{\psi}_{i,j}\hat{w}_{i,N}(t)\la X_\ell, \hat{v}_{j,N}\ra,
$$
where $\hat{\psi}_{i,j}$ is the $(i,j)^{\mbox{{\it th}}}$ element of $\hat{\bbeta}_N$ when it is written in the matrix form, i.e.\ $\{\hat{\psi}_{i,j}, 1\leq i \leq q, 1\leq j \leq p\}=\mbox{vec}^{-1}(\hat{\bbeta}_N)$. Now $\bgamma_\ell$ will be replaced with
%$\hat{\bgamma}_\ell$
$$
    \hat{{\boldsymbol \gamma}}_{\ell} =  \vectorization \left( \{ \hat{\gamma}_{\ell}(i,j), 1\le i\le q, 1\le j\le p \}^T \right),
$$
where
$$
\hat{\gamma}_{\ell}(i,j)=\la X_{\ell}, \hat{v}_{j,N}\ra \la \hat{\epsilon}_{\ell},\hat{w}_{i,N}\ra.
$$
Now the Bartlett-type estimator is defined as
\begin{equation}\label{eq:hatbart}
\breve{{\boldsymbol \Sigma}}_N = \sum_{k=-(N-1)}^{N-1} K(k/B_N) \hat{{\boldsymbol \phi}}_{k,N},
\end{equation}
where
$$
\hat{{\boldsymbol \phi}}_{k,N}= \frac{1}{N} \sum_{\ell=\max(1,1-k)}^{\min(N,N-k)} \hat{{\bgamma}}_{\ell}\hat{{\bgamma}}_{\ell+k}^T.
$$
The next result states that the proposed estimator satisfies Assumption \ref{a:consistent}.
\begin{Theorem}\label{Th:bart-2} If Assumptions \ref{a:shifts}-\ref{a:PSImoments}, \ref{a:K}
hold and
\begin{equation}\label{window}
B_N\rightarrow\infty\;\;\;\mbox{and}\;\;\;B_N/N^{1/2}\rightarrow 0,
\end{equation}
then Assumption \ref{a:consistent} is satisfied.
\end{Theorem}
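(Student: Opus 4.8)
The plan is to reduce Theorem~\ref{Th:bart-2} to Theorem~\ref{Th:bart-1}. The window condition (\ref{window}) implies $B_N/N\to0$, so Assumption~\ref{a:kernel} holds and all hypotheses of Theorem~\ref{Th:bart-1} are in force; hence $\tilde{\boldsymbol\Sigma}_N\convP{\boldsymbol\Sigma}$, where $\tilde{\boldsymbol\Sigma}_N$ is the infeasible estimator built from the true sequence ${\boldsymbol\gamma}_\ell$. Since ${\boldsymbol\zeta}_N$ is diagonal with entries $\pm1$, it is symmetric with ${\boldsymbol\zeta}_N^2={\mathbf I}_{pq}$, and therefore
\[
\hat{\boldsymbol\Sigma}_N={\boldsymbol\zeta}_N\breve{\boldsymbol\Sigma}_N{\boldsymbol\zeta}_N=\sum_{k=-(N-1)}^{N-1}K(k/B_N)\,\frac1N\sum_{\ell}({\boldsymbol\zeta}_N\hat{\boldsymbol\gamma}_\ell)({\boldsymbol\zeta}_N\hat{\boldsymbol\gamma}_{\ell+k})^T.
\]
Because $\tilde{\boldsymbol\Sigma}_N\convP{\boldsymbol\Sigma}$, Assumption~\ref{a:consistent} follows once I show $\hat{\boldsymbol\Sigma}_N-\tilde{\boldsymbol\Sigma}_N=\op{1}$, i.e.\ that replacing ${\boldsymbol\gamma}_\ell$ by the sign-corrected estimate ${\boldsymbol\zeta}_N\hat{\boldsymbol\gamma}_\ell$ is negligible inside the Bartlett sum.

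The heart of the argument is to show that the $(i,j)$ component of ${\boldsymbol\zeta}_N\hat{\boldsymbol\gamma}_\ell$, namely $\hat c_{j,N}\hat d_{i,N}\la X_\ell,\hat v_{j,N}\ra\la\hat\epsilon_\ell,\hat w_{i,N}\ra$, is close to $\gamma_\ell(i,j)$. I will use the eigenfunction consistency of Theorem~\ref{Th:eigen}, which under Assumption~\ref{a:eigenu} yields $\|\hat c_{j,N}\hat v_{j,N}-v_j\|=\Op{N^{-1/2}}$ and $\|\hat d_{i,N}\hat w_{i,N}-w_i\|=\Op{N^{-1/2}}$, together with the $\sqrt N$-consistency of the least squares estimator, $\hat\psi_{i,j}-\hat d_{i,N}\hat c_{j,N}\psi_{i,j}=\Op{N^{-1/2}}$, available from the weak-convergence analysis underlying Theorem~\ref{Th:main}. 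Pulling the signs inside the inner products gives $\hat c_{j,N}\la X_\ell,\hat v_{j,N}\ra=\la X_\ell,v_j\ra+\la X_\ell,\hat c_{j,N}\hat v_{j,N}-v_j\ra$, and substituting $Y_\ell=\sum_{i,r}\psi_{i,r}w_i\la X_\ell,v_r\ra+\epsilon_\ell$ into $\hat\epsilon_\ell$ shows that the sign-corrected residual projection $\hat d_{i,N}\la\hat\epsilon_\ell,\hat w_{i,N}\ra$ reconstructs $\la\epsilon_\ell,w_i\ra+\la X_\ell,u_i\ra$, the second summand arising precisely because the fitted part keeps only the first $p$ coordinates of $X_\ell$, leaving the tail $\la X_\ell,u_i\ra=\sum_{r>p}\psi_{i,r}\la X_\ell,v_r\ra$. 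This is the structural reason $\gamma_\ell(i,j)$ carries both terms. The outcome is a finite telescoping decomposition ${\boldsymbol\zeta}_N\hat{\boldsymbol\gamma}_\ell-{\boldsymbol\gamma}_\ell=\sum_m{\boldsymbol r}_{\ell,m}$ in which each ${\boldsymbol r}_{\ell,m}$ factors as an $\ell$-free factor of order $\Op{N^{-1/2}}$ times an $\ell$-dependent factor (such as $\|X_\ell\|$, $\|\epsilon_\ell\|$, or $\la X_\ell,v_r\ra$) whose square is integrable by Assumptions~\ref{a:Xmoments} and \ref{a:PSImoments}; the truncation tails over $r>p$ and $i>q$ are summable by Assumptions~\ref{a:k-dependent} and \ref{a:PSImoments}.

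To finish I will control the Bartlett sum via the identity $({\boldsymbol\zeta}_N\hat{\boldsymbol\gamma}_\ell)({\boldsymbol\zeta}_N\hat{\boldsymbol\gamma}_{\ell+k})^T-{\boldsymbol\gamma}_\ell{\boldsymbol\gamma}_{\ell+k}^T=({\boldsymbol\zeta}_N\hat{\boldsymbol\gamma}_\ell-{\boldsymbol\gamma}_\ell)({\boldsymbol\zeta}_N\hat{\boldsymbol\gamma}_{\ell+k})^T+{\boldsymbol\gamma}_\ell({\boldsymbol\zeta}_N\hat{\boldsymbol\gamma}_{\ell+k}-{\boldsymbol\gamma}_{\ell+k})^T$. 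Using the decomposition above and the Cauchy--Schwarz inequality, the inner average $\frac1N\sum_\ell|{\boldsymbol\zeta}_N\hat{\boldsymbol\gamma}_\ell-{\boldsymbol\gamma}_\ell|\,|{\boldsymbol\zeta}_N\hat{\boldsymbol\gamma}_{\ell+k}|$ is bounded, uniformly in $k$, by an $\Op{N^{-1/2}}$ factor times an average of bounded $\ell$-factors that is $\Op{1}$ by stationarity and ergodicity (Assumption~\ref{a:shifts}) and the fourth-moment bounds of Assumption~\ref{a:Xmoments}. Since $K$ is bounded with bounded support (Assumption~\ref{a:K}), only $O(B_N)$ lags $k$ contribute, so summing gives $\hat{\boldsymbol\Sigma}_N-\tilde{\boldsymbol\Sigma}_N=\Op{B_N N^{-1/2}}=\op{1}$ by (\ref{window}). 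This is exactly where the stronger window $B_N=o(N^{1/2})$ is needed, in contrast to Theorem~\ref{Th:bart-1}: every estimation error enters with a factor $N^{-1/2}$, and the lag count $O(B_N)$ can be absorbed only when $B_N/N^{1/2}\to0$. I expect the main obstacle to be the bookkeeping of the second paragraph, namely verifying that the sign-corrected residual projection reproduces $\la\epsilon_\ell,w_i\ra+\la X_\ell,u_i\ra$ and, crucially, that each resulting error term genuinely carries an $N^{-1/2}$ rate rather than a mere $\op{1}$, since only then does the lag count fail to destroy the bound.
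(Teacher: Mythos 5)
Your proposal is correct and follows essentially the same route as the paper: the paper likewise reduces to Theorem \ref{Th:bart-1}, writes $\la \hat{\epsilon}_\ell, w_i\ra = \la \epsilon_\ell, w_i\ra + \la X_\ell, u_i\ra + \la \nu_\ell, w_i\ra$ (your observation that the residual projection reconstructs $\la\epsilon_\ell,w_i\ra+\la X_\ell,u_i\ra$), and then replaces the empirical $\hat{\bgamma}_\ell$ by the sign-corrected true $\bgamma_\ell$ inside the Bartlett sum, using the $\Op{N^{-1/2}}$ rates of Theorem \ref{Th:eigen} and Lemma \ref{l:betahat} against the $O(B_N)$ lag count, which is exactly where (\ref{window}) enters. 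The only cosmetic difference is that the paper performs the replacement in two stages through an intermediate estimator ${\boldsymbol \Sigma}_N^*$ (true eigenfunctions, estimated residuals), whereas you telescope both error sources in a single step; your version also spells out the uniform-in-$k$ Cauchy--Schwarz bound that the paper leaves implicit.
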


The estimator $\breve{{\boldsymbol \Sigma}}_N$ is based on the empirical projections $\hat{\gamma}_{\ell}(i,j)$ which will be replaced with $\hat{d}_{i,N}\hat{c}_{j,N}\gamma_{\ell}(i,j)$ in the proof. The rates in  (\ref{e:eig-w})  and (\ref{e:eig-v})  in Theorem \ref{Th:eigen} are optimal so we have assumption $B_N/N^{1/2}\to 0$ in (\ref{window}) instead of the optimal $B_N/N\to 0$.
%\section{Examples}\label{s:examples}
%\section{Applications}\label{s:applications}

\section{A simulation study}\label{s:empirical5}

In this section, we investigate the empirical size and power of a testing procedure using the integral of the detector, $\int|V_N(t)| \dt$, as our test statistic.  Seeking to obtain a test of size $\alpha = .01$, $.05$, or $.10$, a rejection region was chosen according to the limiting distribution of the test statistic.  Simulated data was then used to compute the outcome of the test statistic.  Iterating this procedure 10,000 times, we kept track of the proportion of times that the outcome fell in the predetermined rejection region.  When simulations are done under $H_0$, this gives us the empirical size of the test, which we expect to be close to the nominal size, $\alpha$, for large sample sizes.  When simulations are done under the alternative, $H_A$, the proportion gives us the empirical power of the test.

The $X_n(t)$'s and $\varepsilon_n(t)$'s were generated according to the distribution of independent standard Brownian bridges.  Then, using $\psi(s,t) = e^{-(s-t)^2}$, we obtained the first half of our sample according to \eqref{e:model}.  The second half of the sample was also obtained from \eqref{e:model} but used $\psi(s,t) = ce^{-(s-t)^2}$.  Thus the power of the test is a function of the parameter $c$.  In particular, when $c=1$, the null hypothesis is true. The Bartlett estimator for $\Sigma$ uses the flat-top kernel
 \begin{equation*}
 K(t)=\left\{
 \begin{array}{l}
 1  \hspace{2cm} 0\leq |t|<.1\\
   1.1-|t|\hspace{1cm} .1\leq |t|<1.1\\
   0\hspace{2 cm} |t|\geq 1.1.
 \end{array}
 \right.
 \end{equation*}
 The resulting empirical size and power are given in Tables~\ref{t:empirical1} -- \ref{t:empirical4} for various values of $p$ and $q$.  

\begin{table}[h!]
\caption{Empirical power of test (in \%) using $p=1$, $q=1$, $B_N=N^{1/3}/4$, and a flat-top kernel for $K(t)$}
\label{t:empirical1}
\centering
\begin{tabular}{|c|r|r|r|r|r|r|r|r|r|} \hline
\multirow{2}{*}{$c$} & \multicolumn{3}{|c|}{$N=100$} & \multicolumn{3}{|c|}{$N=500$} & \multicolumn{3}{|c|}{$N=1000$} \\ \cline{2-10}
& $\alpha=.01$ & $\alpha=.05$ & $\alpha=.10$& $\alpha=.01$ & $\alpha=.05$ & $\alpha=.10$& $\alpha=.01$ & $\alpha=.05$ & $\alpha=.10$\\ \hline
$1.0$	&$	0.8	$&$	5	$&$	10.2	$&$	0.9	$&$	5.1	$&$	10	$&$	1.1	$&$	5.1	$&$	10.2	$\\ \hline	
$1.2$	&$	2.5	$&$	10.1	$&$	18	$&$	15.1	$&$	34.9	$&$	46.9	$&$	35.8	$&$	60.1	$&$	71.7	 $\\ \hline	
$1.4$	&$	8.9	$&$	26.5	$&$	38.9	$&$	70.5	$&$	88.5	$&$	93.3	$&$	96.9	$&$	99.4	$&$	 99.8	$\\ \hline	
$1.6$	&$	24.1	$&$	52.2	$&$	65.5	$&$	98	$&$	99.7	$&$	99.9	$&$	100	$&$	100	$&$	100	$\\ \hline	
$1.8$	&$	46.5	$&$	75.1	$&$	85.1	$&$	100	$&$	100	$&$	100	$&$	100	$&$	100	$&$	100	$\\ \hline	
$2.0$	&$	69.7	$&$	90.7	$&$	95.3	$&$	100	$&$	100	$&$	100	$&$	100	$&$	100	$&$	100	$\\ \hline	
\end{tabular}
\end{table}

\begin{table}[h!]
\caption{Empirical power of test (in \%) using $p=1$, $q=2$, $B_N=N^{1/3}/4$, and a flat-top kernel for $K(t)$}
\label{t:empirical2}
\centering
\begin{tabular}{|c|r|r|r|r|r|r|r|r|r|} \hline
\multirow{2}{*}{$c$} & \multicolumn{3}{|c|}{$N=100$} & \multicolumn{3}{|c|}{$N=500$} & \multicolumn{3}{|c|}{$N=1000$} \\ \cline{2-10}
& $\alpha=.01$ & $\alpha=.05$ & $\alpha=.10$& $\alpha=.01$ & $\alpha=.05$ & $\alpha=.10$& $\alpha=.01$ & $\alpha=.05$ & $\alpha=.10$\\ \hline
$1.0$	&$	0.6	$&$	4.5	$&$	9.4	$&$	1	$&$	5.1	$&$	10.4	$&$	1.1	$&$	5.3	$&$	10.3	$\\ \hline	
$1.2$	&$	1.5	$&$	7.9	$&$	15.8	$&$	10.1	$&$	26.7	$&$	38.7	$&$	25.9	$&$	50.2	$&$	63	$\\ \hline	
$1.4$	&$	5.7	$&$	19.5	$&$	30.9	$&$	58	$&$	80.9	$&$	88.6	$&$	93.6	$&$	98.5	$&$	99.4	 $\\ \hline	
$1.6$	&$	15.3	$&$	40.5	$&$	55.2	$&$	95.8	$&$	99.2	$&$	99.6	$&$	100	$&$	100	$&$	100	$\\ \hline	
$1.8$	&$	35	$&$	65.4	$&$	78.2	$&$	100	$&$	100	$&$	100	$&$	100	$&$	100	$&$	100	$\\ \hline	
$2.0$	&$	56.6	$&$	83.6	$&$	91.6	$&$	100	$&$	100	$&$	100	$&$	100	$&$	100	$&$	100	$\\ \hline		
\end{tabular}
\end{table}

\begin{table}[h!]
\caption{Empirical power of test (in \%) using $p=1$, $q=3$, $B_N=N^{1/3}/4$, and a flat-top kernel for $K(t)$}
\label{t:empirical3}
\centering
\begin{tabular}{|c|r|r|r|r|r|r|r|r|r|} \hline
\multirow{2}{*}{$c$} & \multicolumn{3}{|c|}{$N=100$} & \multicolumn{3}{|c|}{$N=500$} & \multicolumn{3}{|c|}{$N=1000$} \\ \cline{2-10}
& $\alpha=.01$ & $\alpha=.05$ & $\alpha=.10$& $\alpha=.01$ & $\alpha=.05$ & $\alpha=.10$& $\alpha=.01$ & $\alpha=.05$ & $\alpha=.10$\\ \hline
$1.0$	&$	0.7	$&$	4.4	$&$	9.6	$&$	0.7	$&$	5.3	$&$	10.2	$&$	0.8	$&$	5.1	$&$	10.2	$\\ \hline	
$1.2$	&$	1.4	$&$	9.5	$&$	17.5	$&$	18.8	$&$	41.8	$&$	54.8	$&$	50	$&$	74.2	$&$	83.5	$\\ \hline	
$1.4$	&$	7.9	$&$	27.9	$&$	42.3	$&$	87.8	$&$	96.9	$&$	98.5	$&$	99.8	$&$	100	$&$	100	$\\ \hline	
$1.6$	&$	24.9	$&$	57.2	$&$	72.1	$&$	99.9	$&$	100	$&$	100	$&$	100	$&$	100	$&$	100	$\\ \hline	
$1.8$	&$	53.2	$&$	82.7	$&$	90.8	$&$	100	$&$	100	$&$	100	$&$	100	$&$	100	$&$	100	$\\ \hline	
$2.0$	&$	76	$&$	94.6	$&$	97.8	$&$	100	$&$	100	$&$	100	$&$	100	$&$	100	$&$	100	$\\ \hline	
\end{tabular}
\end{table}

\begin{table}[h!]
\caption{Empirical power of test (in \%) using $p=2$, $q=2$, $B_N=N^{1/3}/4$, and a flat-top kernel for $K(t)$}
\label{t:empirical4}
\centering
\begin{tabular}{|c|r|r|r|r|r|r|r|r|r|} \hline
\multirow{2}{*}{$c$} & \multicolumn{3}{|c|}{$N=100$} & \multicolumn{3}{|c|}{$N=500$} & \multicolumn{3}{|c|}{$N=1000$} \\ \cline{2-10}
& $\alpha=.01$ & $\alpha=.05$ & $\alpha=.10$& $\alpha=.01$ & $\alpha=.05$ & $\alpha=.10$& $\alpha=.01$ & $\alpha=.05$ & $\alpha=.10$\\ \hline
$1.0$	&$	1.4	$&$	5.9	$&$	10.7	$&$	0.9	$&$	4.8	$&$	9.6	$&$	1	$&$	4.9	$&$	10	$\\ \hline	
$1.2$	&$	2.1	$&$	8	$&$	14.1	$&$	7.6	$&$	20.8	$&$	31.2	$&$	19	$&$	39.8	$&$	52.7	$\\ \hline	
$1.4$	&$	4.9	$&$	15.5	$&$	25.4	$&$	45.8	$&$	70.2	$&$	80.5	$&$	88	$&$	96.5	$&$	98.4	 $\\ \hline	
$1.6$	&$	11.1	$&$	29.9	$&$	43.4	$&$	90.4	$&$	97.6	$&$	98.9	$&$	100	$&$	100	$&$	100	$\\ \hline	
$1.8$	&$	23.3	$&$	48.8	$&$	62.6	$&$	99.7	$&$	100	$&$	100	$&$	100	$&$	100	$&$	100	$\\ \hline	
$2.0$	&$	38.6	$&$	68	$&$	80.6	$&$	100	$&$	100	$&$	100	$&$	100	$&$	100	$&$	100	$\\ \hline		
\end{tabular}
\end{table}

\section{Random Processes in Hilbert Spaces}\label{s:random}

In this section we summarize some basic results on random variables in Hilbert spaces which are used in the proofs. Let $\|\cdot\|$ denote the $L^2$-norm of functions defined on the unit interval,  the unit square or the unit cube.

\begin{Theorem}\label{Th:sums} If Assumptions \ref{a:shifts}-\ref{a:k-dependent}
hold, then we have
\begin{equation}\label{e:sum-xe}
\left\|\frac{1}{N^{1/2}}\sum_{n=1}^NX_n(t)\epsilon_n(s) \right\|=O_\P(1),
\end{equation}
\begin{equation}\label{e:sum-xx}
\left\|\frac{1}{N^{1/2}}\sum_{n=1}^N(X_n(t)X_n(s)-C(t,s)) \right\|=O_\P(1),
\end{equation}
\begin{equation}\label{e:sum-ee}
\left\|\frac{1}{N^{1/2}}\sum_{n=1}^N(\epsilon_n(t)\epsilon_n(s)-F(t,s))) \right\|=O_\P(1),
\end{equation}
with $F(t,s)=\E(\epsilon_n(t)\epsilon_n(s))$.
If in addition Assumption \ref{a:PSImoments} is also satisfied, then
\begin{equation}\label{e:sum-yy}
\left\|\frac{1}{N^{1/2}}\sum_{n=1}^N(Y_n(t)Y_n(s)-D(t,s)) \right\|=O_\P(1).
\end{equation}
\end{Theorem}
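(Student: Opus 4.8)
The plan is to treat all four statements with a single second-moment argument, since each left-hand side has the form $\|N^{-1/2}\sum_{n=1}^N Z_n\|$ for a mean-zero, stationary sequence $\{Z_n\}$ taking values in $L^2([0,1]^2)$: for (\ref{e:sum-xe}) take $Z_n(t,s)=X_n(t)\epsilon_n(s)$, which is centered by Assumption \ref{a:uncorrelated}; for (\ref{e:sum-xx}) take $Z_n(t,s)=X_n(t)X_n(s)-C(t,s)$; for (\ref{e:sum-ee}) take $Z_n(t,s)=\epsilon_n(t)\epsilon_n(s)-F(t,s)$; and for (\ref{e:sum-yy}) take $Z_n(t,s)=Y_n(t)Y_n(s)-D(t,s)$. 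By Markov's inequality it suffices to bound $\E\|N^{-1/2}\sum_{n=1}^N Z_n\|^2$ uniformly in $N$. Expanding the square and using stationarity gives $\E\|N^{-1/2}\sum_{n=1}^N Z_n\|^2=\sum_{|k|<N}(1-|k|/N)\,c(k)\le\sum_{k=-\infty}^\infty|c(k)|$, where $c(k)=\E\langle Z_0,Z_k\rangle$ is the $L^2$-autocovariance. Thus the whole theorem reduces to showing that the $c(k)$ are absolutely summable.

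To bound $c(k)$ I would exploit Assumption \ref{a:k-dependent} through the standard device of matching the approximation order to the lag. For lag $k$, replace $Z_k$ by its order-$k$ approximation $Z_k^{(k)}$, obtained by substituting $X_k^{(k)},\epsilon_k^{(k)}$ (hence $Y_k^{(k)}$) for $X_k,\epsilon_k,Y_k$. Since $Z_0$ depends only on the innovations $\{\boldsymbol\eta_j:j\le 0\}$ while $Z_k^{(k)}$ depends on $\{\boldsymbol\eta_j:1\le j\le k\}$ together with the independent fresh copies $\boldsymbol\eta_{k,\ell}^{(k)}$ for $\ell\le 0$, the two are independent; as both are centered (each approximation has the same law as the original, so the centering constants and moment conditions are preserved), this gives $\E\langle Z_0,Z_k^{(k)}\rangle=0$, whence $c(k)=\E\langle Z_0,Z_k-Z_k^{(k)}\rangle$. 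Cauchy--Schwarz then yields $|c(k)|\le(\E\|Z_0\|^2)^{1/2}(\E\|Z_k-Z_k^{(k)}\|^2)^{1/2}$, and $\E\|Z_0\|^2$ is finite by Assumption \ref{a:Xmoments} (e.g. $\E\int\!\int X_0(t)^2X_0(s)^2\,dt\,ds=\E(\int X_0^2)^2\le\E\int X_0^4$).

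The technical heart is estimating $\E\|Z_k-Z_k^{(k)}\|^2$ by the approximation quantities in Assumption \ref{a:k-dependent}. The key point is that the product structure makes the $(t,s)$-integral factorize: for (\ref{e:sum-xx}), writing $X_k(t)X_k(s)-X_k^{(k)}(t)X_k^{(k)}(s)=(X_k(t)-X_k^{(k)}(t))X_k(s)+X_k^{(k)}(t)(X_k(s)-X_k^{(k)}(s))$ and squaring, each resulting double integral splits by Fubini into a product such as $(\int(X_k-X_k^{(k)})^2\,dt)(\int X_k^2\,ds)$. Applying Cauchy--Schwarz in the probability space and then the elementary bound $(\int f^2)^2\le\int f^4$ on $[0,1]$ reduces everything to $\E\int(X_k-X_k^{(k)})^4$ and $\E\int X_k^4$, giving $(\E\|Z_k-Z_k^{(k)}\|^2)^{1/2}\le \mathrm{const}\cdot a_k$ with $a_k=(\E\int(X_n-X_n^{(k)})^4\,dt)^{1/4}$ the summand of (\ref{e:dep-1}); the cases (\ref{e:sum-xe}) and (\ref{e:sum-ee}) are identical with $b_k=(\E\int(\epsilon_n-\epsilon_n^{(k)})^4\,dt)^{1/4}$ from (\ref{e:dep-2}) replacing, or appearing alongside, $a_k$. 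Summing, $\sum_k|c(k)|\le \mathrm{const}\sum_k(a_k+b_k)<\infty$, which closes the argument. I expect this factorization-and-repeated-Cauchy--Schwarz bookkeeping to be the main obstacle, since one must carry the correct power of each factor throughout.

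The only extra ingredient is (\ref{e:sum-yy}), for which I first verify that $Y_n$ is itself an $L^4$-$m$-approximable Bernoulli shift. Writing $Y_n(t)-Y_n^{(k)}(t)=\int_0^1\Psi(s,t)(X_n(s)-X_n^{(k)}(s))\,ds+(\epsilon_n(t)-\epsilon_n^{(k)}(t))$ and applying Minkowski's inequality, it suffices to control the integral-operator term; by Cauchy--Schwarz $(\int\Psi(s,t)(X_n-X_n^{(k)})\,ds)^4\le(\int\Psi(s,t)^2\,ds)^2(\int(X_n-X_n^{(k)})^2\,ds)^2$, and integrating in $t$ while using $\int\!\int\Psi^4<\infty$ from Assumption \ref{a:PSImoments} bounds its contribution by $\mathrm{const}\cdot a_k$. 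Hence $(\E\int(Y_n-Y_n^{(k)})^4\,dt)^{1/4}\le \mathrm{const}\,(a_k+b_k)$ is summable and $\E\int Y_n^4<\infty$, so the generic argument of the previous paragraphs applies verbatim to $Z_n(t,s)=Y_n(t)Y_n(s)-D(t,s)$, completing the proof.
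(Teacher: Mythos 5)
Your proposal is correct, but it proceeds quite differently from the paper. The paper's proof of this theorem is essentially a citation: it invokes Theorem 3.1 of H\"ormann and Kokoszka (2010), noting that Assumption \ref{a:k-dependent} ($L^4$-$m$-approximability) directly yields (\ref{e:sum-xx}) and (\ref{e:sum-ee}), and then observes in one sentence that the products $X_n(t)\epsilon_n(s)$ and $Y_n(t)Y_n(s)$ are themselves $k$-approximable (using the model (\ref{e:model})), so the same external theorem gives (\ref{e:sum-xe}) and (\ref{e:sum-yy}). What you have done is, in effect, inline a self-contained proof of the needed portion of that external result: Markov's inequality reduces everything to a uniform bound on $\E\bigl\|N^{-1/2}\sum_{n\le N}Z_n\bigr\|^2$, which by stationarity is controlled by $\sum_k|c(k)|$ with $c(k)=\E\la Z_0,Z_k\ra$; the coupling trick (replacing $Z_k$ by the independent, identically distributed $Z_k^{(k)}$ so that $c(k)=\E\la Z_0,Z_k-Z_k^{(k)}\ra$), followed by Cauchy--Schwarz and the factorization of the $(t,s)$-integrals, ties the decay of $c(k)$ to the summable sequences in (\ref{e:dep-1}) and (\ref{e:dep-2}). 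Your treatment of (\ref{e:sum-yy}), where you verify that $Y_n$ inherits $L^4$-$k$-approximability from $X_n$ and $\epsilon_n$ via Minkowski, Cauchy--Schwarz and Assumption \ref{a:PSImoments}, is precisely the step the paper dispatches with its one-sentence remark, and it is the place where your write-up adds genuine detail. The trade-off: the paper's route is shorter and leans on a stronger known result (H\"ormann--Kokoszka's theorem gives more than boundedness), while yours is elementary, makes transparent exactly which assumption enters where (Assumption \ref{a:uncorrelated} for centering in (\ref{e:sum-xe}), fourth moments for $\E\|Z_0\|^2<\infty$, approximability for covariance summability), and requires nothing beyond second-moment computations.
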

\begin{proof} It was pointed out in H\"ormann and Kokoszka (2010) that the $k$-approximable property in Assumption \ref{a:k-dependent} implies (\ref{e:sum-xx}) and (\ref{e:sum-ee}). Using (\ref{e:model}), we get that the sums of $X_n(t)\epsilon_n(s)$ and  $ Y_n(t)Y_n(s)$ are also $k$-approximable so the rest of the result again follows from Theorem 3.1 of H\"ormann and Kokoszka (2010).
\end{proof}

\begin{Theorem}\label{Th:eigen} If Assumptions \ref{a:shifts}-\ref{a:PSImoments}
hold, then we have
\begin{equation}\label{e:eig-w}
\max_{1\leq i \leq q}\|\hat{w}_{i,N}(t) - \hat{d}_{i,N} w_i(t)\| =\Op{N^{-1/2}},
\end{equation}
\begin{equation}\label{e:eig-v}
\max_{1\leq j \leq p}\|\hat{v}_{j,N}(t) - \hat{c}_{j,N} v_j(t)\| = \Op{N^{-1/2}}
\end{equation}
and
\begin{equation}\label{e:eig-t}
\max_{1\leq i \leq q}|\hat{\tau}_{i,N}-\tau_{i}|=\Op{N^{-1/2}},
\end{equation}
\begin{equation}\label{e:eig-l}
\max_{1\leq j \leq q}|\hat{\lambda}_{j,N}-\lambda_{j}|=\Op{N^{-1/2}}.
\end{equation}
\end{Theorem}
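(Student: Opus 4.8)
The plan is to reduce everything to a single deterministic input: that the empirical covariance kernels approximate the true kernels at rate $N^{-1/2}$ in the Hilbert--Schmidt ($L^2$) norm. Once this is established, \eqref{e:eig-w}--\eqref{e:eig-l} follow from classical perturbation inequalities for the eigenvalues and eigenelements of self-adjoint Hilbert--Schmidt operators, with the random signs arising exactly as in those inequalities.

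First I would control $\|\hat C_N-C\|$. Writing out the centering,
$$
\hat C_N(s,t)-C(s,t) = \left(\frac{1}{N}\sum_{k=1}^N X_k(t)X_k(s)-C(s,t)\right) - \bar X_N(t)\bar X_N(s),
$$
the first bracket is $\Op{N^{-1/2}}$ in $L^2$ directly by \eqref{e:sum-xx} of Theorem \ref{Th:sums}. For the centering term, the Bernoulli-shift/$k$-approximability machinery underlying Theorem \ref{Th:sums} (together with $\E X_n=0$) gives $\|\bar X_N\|=\Op{N^{-1/2}}$, whence $\|\bar X_N(t)\bar X_N(s)\|=\|\bar X_N\|^2=\Op{N^{-1}}$, which is of smaller order and does not affect the rate. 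Thus $\|\hat C_N-C\|=\Op{N^{-1/2}}$. The identical argument, now invoking \eqref{e:sum-yy} (which is where the extra moment Assumption \ref{a:PSImoments} enters) in place of \eqref{e:sum-xx}, yields $\|\hat D_N-D\|=\Op{N^{-1/2}}$.

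Given these two bounds, the eigenvalue estimates \eqref{e:eig-t} and \eqref{e:eig-l} are immediate from the Weyl-type inequality for self-adjoint compact operators,
$$
\max_{1\le j\le p}|\hat\lambda_{j,N}-\lambda_j|\le \|\hat C_N-C\|_{\mathcal L}\le \|\hat C_N-C\|,
$$
where $\|\cdot\|_{\mathcal L}$ denotes the operator norm, dominated by the Hilbert--Schmidt norm, and similarly for the $\hat\tau_{i,N}$. For the eigenfunctions I would use the standard perturbation bound (cf.\ Bosq (2000)): with the random sign $\hat c_{j,N}=\mbox{sign}(\la \hat v_{j,N},v_j\ra)$,
$$
\|\hat v_{j,N}-\hat c_{j,N}v_j\|\le \frac{2\sqrt 2}{\alpha_j}\,\|\hat C_N-C\|_{\mathcal L},
\qquad \alpha_j=\min(\lambda_{j-1}-\lambda_j,\ \lambda_j-\lambda_{j+1}),
$$
(with the convention $\lambda_0=+\infty$), and symmetrically for $\hat w_{i,N}$ with sign $\hat d_{i,N}$ and the gaps $\tau_{i-1}-\tau_i,\ \tau_i-\tau_{i+1}$. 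The decisive point is that Assumption \ref{a:eigenu}, stated up to $\lambda_{p+1}$ and $\tau_{q+1}$, forces all the relevant gaps for $1\le j\le p$ and $1\le i\le q$ to be strictly positive constants independent of $N$; hence $\min_{1\le j\le p}\alpha_j>0$ and the per-index bounds combine into the uniform statements \eqref{e:eig-w} and \eqref{e:eig-v}.

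There is no genuine obstacle here: all the stochastic content is packaged in Theorem \ref{Th:sums}, and the remaining steps are deterministic. The only two points needing care are (i) verifying that the mean-centering contribution is of the smaller order $\Op{N^{-1}}$ and therefore does not degrade the $N^{-1/2}$ rate, and (ii) checking that the finite family of eigengaps is bounded away from zero under Assumption \ref{a:eigenu}, which is precisely what upgrades the single-index perturbation inequalities to the maxima over the finitely many indices appearing in \eqref{e:eig-w}--\eqref{e:eig-l}.
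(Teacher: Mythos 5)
Your proof is correct and follows essentially the same route as the paper: Theorem \ref{Th:sums} supplies the $\Op{N^{-1/2}}$ Hilbert--Schmidt bounds for the empirical covariance kernels, and the classical perturbation inequalities for self-adjoint compact operators (Lemma 4.3 of Bosq (2000), Corollary 1.6 of Gohberg et al (1990)) then yield \eqref{e:eig-w}--\eqref{e:eig-l}, with Assumption \ref{a:eigenu} guaranteeing the needed spectral gaps. The only difference is that you explicitly dispose of the mean-centering term $\bar{X}_N(t)\bar{X}_N(s)$ as $\Op{N^{-1}}$, a detail the paper's one-line proof passes over in silence.
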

\begin{proof} Using Corollary 1.6 of Gohberg {\it et al} (1990, p.\ 99) we get that (\ref{e:eig-w}) follows from (\ref{e:sum-yy}). According to Lemma 4.3 of Bosq (2000), (\ref{e:sum-yy}) implies (\ref{e:eig-t}). Similarly,  (\ref{e:sum-xx}) yields (\ref{e:eig-v}) and (\ref{e:eig-l}).
\end{proof}

The next result is a uniform version of Theorem \ref{Th:sums}.
\begin{Theorem}\label{Th:menshov} If Assumptions \ref{a:shifts}-\ref{a:k-dependent} and \ref{a:PSImoments}
hold, then we have
\begin{equation}\label{e:men-xe}
\max_{1\leq k \leq N}\left\|\frac{1}{N^{1/2}}\sum_{n=1}^kX_n(t)\epsilon_n(s) \right\|=O_\P(\log N),
\end{equation}
\begin{equation}\label{e:men-xx}
\max_{1\leq k \leq N}\left\|\frac{1}{N^{1/2}}\sum_{n=1}^k(X_n(t)X_n(s)-C(t,s)) \right\|=O_\P(\log N),
\end{equation}
\begin{equation}\label{e:men-ee}
\max_{1\leq k \leq N}\left\|\frac{1}{N^{1/2}}\sum_{n=1}^k(\epsilon_n(t)\epsilon_n(s)-F(t,s))) \right\|=O_\P(\log N)
\end{equation}
with $F(t,s)=\E(\epsilon_n(t)\epsilon_n(s))$.
If in addition Assumption \ref{a:PSImoments} is also satisfied, then
\begin{equation}\label{e:men-yy}
  \max_{1\leq k \leq N}\left\|\frac{1}{N^{1/2}}\sum_{n=1}^k(Y_n(t)Y_n(s)-D(t,s)) \right\|=O_\P(\log N).
\end{equation}
\end{Theorem}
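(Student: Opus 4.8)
The plan is to prove all four displays by one and the same device, since in each case the summand is a centered, square-integrable function on the unit square. Write $g_n(t,s)$ for $X_n(t)\epsilon_n(s)$, for $X_n(t)X_n(s)-C(t,s)$, for $\epsilon_n(t)\epsilon_n(s)-F(t,s)$, or for $Y_n(t)Y_n(s)-D(t,s)$, put $S_k=\sum_{n=1}^k g_n$, and aim to show $\max_{1\le k\le N}\|S_k\|=\Op{N^{1/2}\log N}$, which is exactly the asserted $\Op{\log N}$ after dividing by $N^{1/2}$. That each $g_n$ lies in $L^2$ of the square with a finite second moment follows from the fourth-moment bounds in Assumption \ref{a:Xmoments} together with Assumption \ref{a:PSImoments} (via Cauchy--Schwarz), and the mean-zero property of the first summand uses Assumption \ref{a:uncorrelated}; the other three are centered by construction.

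The engine is a block second-moment bound. The proof of Theorem \ref{Th:sums} (through H\"ormann and Kokoszka (2010)) shows not merely that $N^{-1/2}\|\sum_{n=1}^N g_n\|=\Op{1}$ but that $\E\|\sum_{n=1}^N g_n\|^2\le CN$; equivalently, the $L^2$ inner products $\E\la g_n,g_m\ra$ are summable in $|n-m|$, which is what the weak dependence of Assumption \ref{a:k-dependent} and the four moments deliver. By stationarity this yields, for every $m\ge 1$, the bound $\E\|\sum_{n=1}^{m} g_n\|^2\le Cm$ with $C$ independent of $m$.

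I would then run the standard dyadic (Menshov--Rademacher) maximal argument. Any $k\le N$ determines an interval $[1,k]$ that is the disjoint union of at most $\lfloor\log_2 N\rfloor+1$ dyadic blocks $(j2^r,(j+1)2^r]$, at most one per level $r$, so that, writing $T_{r,j}=\sum_{n=j2^r+1}^{(j+1)2^r} g_n$ and $M_r=\max_{0\le j<N/2^r}\|T_{r,j}\|$, one has $\max_{1\le k\le N}\|S_k\|\le\sum_{r=0}^{\lfloor\log_2 N\rfloor} M_r$. Since the blocks at a fixed level number at most $N/2^r$ and each satisfies $\E\|T_{r,j}\|^2\le C2^r$ by stationarity and the block bound, crudely $\E M_r^2\le\sum_j\E\|T_{r,j}\|^2\le CN$ uniformly in $r$. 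Minkowski's inequality in $L^2(\Omega)$ then gives $(\E\max_k\|S_k\|^2)^{1/2}\le\sum_r(\E M_r^2)^{1/2}\le C^{1/2}N^{1/2}(\log_2 N+1)$, so $\E\max_k\|S_k\|^2\le CN(\log N)^2$, and Markov's inequality closes the argument.

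The genuinely substantive step is the block second-moment bound $\E\|\sum_{n=1}^m g_n\|^2\le Cm$, i.e. the summability of the covariance kernels in $L^2$ of the square; this is precisely where Assumption \ref{a:k-dependent} and the fourth-moment conditions enter, and it is already contained in the proof of Theorem \ref{Th:sums}. Everything after that is deterministic dyadic chaining, which is insensitive to the dependence structure and to the fact that the summands are Hilbert-space valued, and which produces a single logarithmic factor (squared in the $L^2$ bound, hence a single $\log N$ in the norm). A minor point to verify is that the four summands are handled uniformly; the only case requiring Assumption \ref{a:PSImoments} is the $Y_nY_n-D$ display, exactly as in Theorem \ref{Th:sums}.
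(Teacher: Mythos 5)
Your proof is correct, and it reaches the same $(\log N)^2$ second-moment bound as the paper, but the execution differs in a way worth noting. The paper works pointwise in $(t,s)$: following Section A.1 of H\"ormann and Kokoszka (2010) it produces an integrable envelope $g(t,s)$ with $\E\bigl(\sum_{n=1}^k X_n(t)\epsilon_n(s)\bigr)^2\le k\,g(t,s)$, applies the scalar Menshov maximal inequality (cited from M\'oricz (1976)) for each fixed $(t,s)$ to get $\E\max_{k\le N}\bigl(\sum_{n=1}^k X_n(t)\epsilon_n(s)\bigr)^2\le (\log N)^2 N g(t,s)$, and then integrates over the unit square. You instead work directly at the level of the Hilbert-space norm: you take the block bound $\E\|\sum_{n=1}^m g_n\|^2\le Cm$ (which, as you say, is what the H\"ormann--Kokoszka argument behind Theorem \ref{Th:sums} actually delivers, via summability of $\E\la g_n,g_m\ra$ in $|n-m|$ and stationarity) and re-derive the Menshov-type maximal inequality yourself by the dyadic decomposition of $[1,k]$, the crude bound $\E M_r^2\le \sum_j \E\|T_{r,j}\|^2\le CN$ per level, and Minkowski in $L^2(\Omega)$ across the $O(\log N)$ levels. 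What your route buys is a formally weaker input (a single norm-level second-moment bound rather than a pointwise integrable envelope) and a self-contained chaining argument valid for Hilbert-space-valued summands, avoiding the citation to M\'oricz; what the paper's route buys is brevity, since the scalar maximal inequality is quoted rather than reproved and the envelope $g(t,s)$ comes for free from the same computation. Both hinge on the identical substantive ingredient --- the weak-dependence covariance summability from Assumption \ref{a:k-dependent} established in H\"ormann and Kokoszka (2010) --- so neither proof has a gap the other avoids; your treatment of the four displays, including the observation that Assumption \ref{a:PSImoments} enters only for the $Y_nY_n-D$ case and Assumption \ref{a:uncorrelated} only to center $X_n(t)\epsilon_n(s)$, matches the paper's.
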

\begin{proof}
Following the proof in Section A.1 in H\"ormann and Kokoszka (2010) one can easily verify that there is an integrable function $g(t,s)$ such that
$$
\E\left( \sum_{n=1}^kX_n(t)\epsilon_n(s)\right)^2\leq kg(t,s).
$$
Hence by Menshov's inequality (cf.\  M\'oricz (1976)) we have that
$$
\E\max_{1\leq k \leq N}\left( \sum_{n=1}^kX_n(t)\epsilon_n(s)\right)^2\leq (\log N)^2 Ng(t,s),
$$
implying (\ref{e:men-xe}). Similar arguments yield (\ref{e:men-xx})-(\ref{e:men-yy}).
\end{proof}

The next results establish the weak convergence of the sum of the $\gamma_\ell$'s.
\begin{Theorem}\label{aueapp} If Assumptions \ref{a:shifts}-\ref{a:k-dependent} and \ref{a:PSImoments} hold, then
$$
\frac{1}{N^{1/2}}\sum_{\ell=1}^{\lfloor Nt \rfloor}{\boldsymbol \gamma}_\ell\;\;\;\stackrel{{\cal D}^{pq}[0,1]}{\longrightarrow}\;\;\; \bW_{{\boldsymbol \Sigma}}(t),
$$
 where $\bW_{{\boldsymbol \Sigma}}$ is a $pq$ dimensional Brownian motion with zero mean and $\E(\bW_{{\boldsymbol \Sigma}} (t)\bW_{{\boldsymbol \Sigma}}(s)^T)=\min (t,s){\boldsymbol \Sigma}.$
\end{Theorem}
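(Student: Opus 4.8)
The plan is to recognize that $\{\bgamma_\ell\}$ is a stationary sequence of mean-zero, square-integrable random vectors that inherits the Bernoulli-shift / weak-dependence structure of $(X_n,\epsilon_n)$, and then to invoke the functional central limit theorem for such sequences established by Aue et al (2009) (see also H\"ormann and Kokoszka (2010)). That invariance principle produces a Brownian motion whose covariance is $\min(t,s)$ times the long-run covariance matrix $\sum_{h\in\mathbb Z}\E\bgamma_0\bgamma_h^T$, and this matrix is exactly the ${\boldsymbol\Sigma}$ defined above, since $\E\bgamma_0\bgamma_0^T+\sum_{\ell\ge1}\E\bgamma_0\bgamma_\ell^T+\sum_{\ell\ge1}\E\bgamma_\ell\bgamma_0^T$ is the usual two-sided series. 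Thus the whole argument reduces to verifying three hypotheses of that theorem: stationarity with mean zero, a finite second moment, and $L^2$-$m$-approximability of the product sequence $\bgamma_\ell$.

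First I would check that $\E\bgamma_\ell=0$. Writing $\gamma_\ell(i,j)=\la X_\ell,v_j\ra\la\epsilon_\ell,w_i\ra+\la X_\ell,v_j\ra\la X_\ell,u_i\ra$, Assumption \ref{a:uncorrelated} gives $\E[\la X_\ell,v_j\ra\la\epsilon_\ell,w_i\ra]=\int_0^1\int_0^1\E[X_\ell(s)\epsilon_\ell(t)]v_j(s)w_i(t)\ds\dt=0$, so the first term is centered. For the second term, $\E[\la X_\ell,v_j\ra\la X_\ell,u_i\ra]=\int_0^1\int_0^1 C(s,t)v_j(s)u_i(t)\ds\dt=\lambda_j\la v_j,u_i\ra$, and since $u_i=\sum_{r>p}\psi_{i,r}v_r$ is orthogonal to $v_j$ for $j\le p$, this also vanishes. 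Stationarity is immediate from Assumption \ref{a:shifts}, as $\bgamma_\ell$ is a fixed measurable functional of $(X_\ell,\epsilon_\ell)$ and hence a Bernoulli shift in the same innovations $\{{\boldsymbol\eta}_k\}$.

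The main work, and the step I expect to be the principal obstacle, is establishing $L^2$-$m$-approximability of the bilinear sequence together with its bounded second moment. Let $X_\ell^{(m)},\epsilon_\ell^{(m)}$ be the coupled versions of Assumption \ref{a:k-dependent}, and form $\gamma_\ell^{(m)}(i,j)$ by replacing $X_\ell,\epsilon_\ell$ accordingly. I would expand each product difference through $ab-a'b'=(a-a')b+a'(b-b')$ and bound each piece by the Cauchy--Schwarz inequality, for instance $\big\|(\la X_\ell,v_j\ra-\la X_\ell^{(m)},v_j\ra)\la\epsilon_\ell,w_i\ra\big\|_2\le(\E\|X_\ell-X_\ell^{(m)}\|^4)^{1/4}(\E\|\epsilon_\ell\|^4)^{1/4}$, using $\|v_j\|=\|w_i\|=1$. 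The inequality $(\int_0^1 f^2\dt)^2\le\int_0^1 f^4\dt$ on $[0,1]$ shows the first factor is at most $\big(\E\int_0^1(X_\ell(t)-X_\ell^{(m)}(t))^4\dt\big)^{1/4}$, which is summable over $m$ by \eqref{e:dep-1}, while the second factor is finite by Assumption \ref{a:Xmoments}. The terms involving $u_i$ carry an extra factor $\|u_i\|\le(\sum_{r>p}\psi_{i,r}^2)^{1/2}<\infty$, finite because $\Psi\in L^2[0,1]^2$. Summing the finitely many $(i,j)$ contributions gives $\sum_{m\ge1}(\E|\bgamma_\ell-\bgamma_\ell^{(m)}|^2)^{1/2}<\infty$, which is the required approximability; the same Cauchy--Schwarz bounds applied directly yield $\E|\bgamma_\ell|^2<\infty$.

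With stationarity, mean zero, finite second moment, and summable approximation coefficients in hand, the weak convergence in the Skorokhod space $D^{pq}[0,1]$ follows from the invariance principle for $L^2$-$m$-approximable stationary sequences, and the identification of the long-run covariance with ${\boldsymbol\Sigma}$ completes the argument. The only genuine subtlety is confirming that the approximation coefficients of the bilinear terms stay summable after passing through the products, which is exactly what the fourth-order summability in Assumption \ref{a:k-dependent}, combined with the fourth moments in Assumption \ref{a:Xmoments}, is designed to guarantee.
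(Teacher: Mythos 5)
Your proposal follows essentially the same route as the paper: the paper's proof consists precisely of noting that Assumptions \ref{a:shifts}--\ref{a:k-dependent} imply the $L^2$-$m$-approximability condition $\sum_{m\ge 1}\bigl(\E(\gamma_\ell(i)-\gamma_\ell^{(m)}(i))^2\bigr)^{1/2}<\infty$ for the coupled sequence $\bgamma_\ell^{(m)}$ and then invoking Theorem A.1 of Aue et al (2009). Your verification of the mean-zero property, stationarity, and the Cauchy--Schwarz/fourth-moment argument for the approximability of the bilinear terms correctly fills in the details that the paper leaves implicit, so the proposal is sound and matches the paper's approach.
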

\begin{proof}
First we note that Assumptions \ref{a:shifts}-\ref{a:k-dependent} imply that
\begin{equation}\label{eq:aue}
\sum_{m=1}^\infty\left( \E(\gamma_\ell(i)- \gamma_\ell^{(m)}(i))^2\right)^{1/2}<\infty,
\end{equation}
where $\gamma_\ell(i)$ and $\gamma_\ell^{(m)}(i)$ are the $i^{{\mbox {\it th}}}$ coordinates of the vectors $\bgamma_\ell$ and $\bgamma_\ell^{(m)}$ with
$$
\bgamma_\ell^{(m)}=\mbox{vec}(\{\gamma_\ell^{(m)}(i,j), 1\leq i \leq q, 1\leq j \leq p\}^T),
$$
and
$$
\gamma_\ell^{(m)}(i,j)=\la X_\ell^{(m)}, v_j\ra \la \epsilon_\ell^{(m)}, w_i\ra+
\la X_\ell^{(m)}, v_j\ra \la X_\ell^{(m)}, u_i\ra .
$$
The result now follows immediately from Theorem A.1 of Aue et al (2009).
\end{proof}
%%%%%%%%%%%%%%%%%%%%%%%%%%%%%%%%%%%%%%%%%%%%%%%%%%%%%%%%%%%%%%%%%%%%%%%%%%%%%%%%%%%%%%%%%%%%%%%%%%%%%%%%%%%%%%%%%%%%%%%%%%%%%%%%
%%%%%%%%%%%%%%%%%%%%%%%%%%%%%%%%%%%%%%%%%%%%%%%%%%%%%%%%%%%%%%%%%%%%%%%%%%%%%%%%%%%%%%%%%%%%%%%%%%%%%%%%%%%%%%%%%%%%%%%%%%%%%%%%

\section{Proof of Theorem \ref{Th:main}}\label{s:proofs}
First we outline the proof of Theorem \ref{Th:main}. Using the definition of the residual vectors we can write that
%$$
\begin{align}\label{eq:star}
\tilde{\bV}_N(t)&=N^{-1/2}\left( \left( \hat{\mathbf Z}_{\lfloor Nt \rfloor}^T\hat{\mathbf Y}_{\lfloor Nt \rfloor} -  \hat{\mathbf Z}_{\lfloor Nt \rfloor}^T \hat{\mathbf Z}_{\lfloor Nt \rfloor}\hat{\boldsymbol \beta}_N\right) - t \left( \hat{\mathbf Z}_{N}^T\hat{\mathbf Y}_{N} -  \hat{\mathbf Z}_{N}^T \hat{\mathbf Z}_{N}\hat{\boldsymbol \beta}_N\right)\right)
\\
&=N^{-1/2}\left(\left(\hat{\mathbf Z}_{\lfloor Nt \rfloor}^T \hat{\boldsymbol \Delta}_{\lfloor Nt \rfloor} - t\hat{\mathbf Z}_{N}^T \hat{\boldsymbol \Delta}_{N}\right) + \left(\hat{\mathbf Z}_{\lfloor Nt \rfloor}^T \hat{\mathbf Z}_{\lfloor Nt \rfloor}- t\hat{\mathbf Z}_{N}^T \hat{\mathbf Z}_{N}\right)\left(\boldsymbol \beta-\hat{\boldsymbol \beta}_N\right)\right) \notag\\
&=N^{-1/2}\left(\hat{\mathbf Z}_{\lfloor Nt \rfloor}^T \hat{\boldsymbol \Delta}_{\lfloor Nt \rfloor} - t\hat{\mathbf Z}_{N}^T \hat{\boldsymbol \Delta}_{N}\right) + \left(\frac{\hat{\mathbf Z}_{\lfloor Nt \rfloor}^T \hat{\mathbf Z}_{\lfloor Nt \rfloor}- t\hat{\mathbf Z}_{N}^T \hat{\mathbf Z}_{N}}{N}\right)\left(\boldsymbol \beta-\hat{\boldsymbol \beta}_N\right)\sqrt{N},\notag
\end{align}
%$$
with
$$
\hat{\boldsymbol \Delta}_{\lfloor Nt \rfloor} =
\begin{pmatrix}
\hat{\boldsymbol \Delta}{(1)}\\
\hat{\boldsymbol \Delta}{(2)}\\
\vdots\\
\hat{\boldsymbol \Delta}{({\lfloor Nt \rfloor})}
\end{pmatrix}.
$$
We show  that
\begin{equation}\label{eq:bound}
\left(\boldsymbol \beta-\hat{\boldsymbol \beta}_N\right)\sqrt{N} = \Op{1},
\end{equation}
(cf.\  Lemma \ref{l:betahat}) and we prove in Lemma \ref{l:convergetozero} that
\begin{equation}\label{eq:littleo}
\sup_{t\in [0,1]} \left|\frac{\hat{\mathbf Z}_{\lfloor Nt \rfloor}^T \hat{\mathbf Z}_{\lfloor Nt \rfloor}- t\hat{\mathbf Z}_{N}^T \hat{\mathbf Z}_{N}}{N}\right|=\op{1}.
\end{equation}
Combining (\ref{eq:bound}) and (\ref{eq:littleo}) we conclude that
$$
\sup_{t\in [0,1]} \left|\left(\frac{\hat{\mathbf Z}_{\lfloor Nt \rfloor}^T \hat{\mathbf Z}_{\lfloor Nt \rfloor}- t\hat{\mathbf Z}_{N}^T \hat{\mathbf Z}_{N}}{N}\right)\left(\boldsymbol \beta-\hat{\boldsymbol \beta}_N\right)\sqrt{N}\right| = \op{1}.
$$
Thus we see  that $N^{-1/2}\left(\hat{\mathbf Z}_{\lfloor Nt \rfloor}^T \hat{\boldsymbol \Delta}_{\lfloor Nt \rfloor} - t\hat{\mathbf Z}_{N}^T \hat{\boldsymbol \Delta}_{N}\right)$ is the leading term while the remainder can be disregarded when considering the limiting distribution of our cumulative sum process (\ref{eq:cumsum}).\\

We  now start with the proof of (\ref{eq:littleo}).

%%%%%%%%%%%%%%%%%%%%%%%%%%%%%%%%%%%%%%%%%%%%%%%%%%%%%%%%%%%%%%%%%%%%%%%%%%%%%%%%%%%%%%%%%%%%%%%%%%%%%%%%%%%%%%%%%%%%%%%%%%%%%%%%
%%%%%%%%%%%%%%%%%%%%%%%%%%%%%%%%%%%%%%%%%%%%%%%%%%%%%%%%%%%%%%%%%%%%%%%%%%%%%%%%%%%%%%%%%%%%%%%%%%%%%%%%%%%%%%%%%%%%%%%%%%%%%%%%

\begin{Lemma}\label{l:converge} If Assumptions \ref{a:shifts}-\ref{a:eigenu} hold, then we have
$$
\frac{1}{k}\sum_{n=1}^k\la X_n,  v_i\ra \la X_n,  v_j\ra  \as  \lambda_i\, 1\{i=j\}\quad \mbox{as}\;\;\;k\rightarrow \infty.
$$
\end{Lemma}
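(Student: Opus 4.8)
The plan is to read the left-hand side as the time average of a stationary, ergodic sequence, apply the ergodic theorem to identify the limit as an expectation, and then evaluate that expectation using the eigen-equation for $C$ together with the orthonormality of the $v_i$.

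First I would set $Z_n=\la X_n, v_i\ra\la X_n, v_j\ra$. Because $v_i$ and $v_j$ are fixed functions, the maps $x\mapsto\la x, v_i\ra$ are fixed bounded linear functionals, so $Z_n=h(a(\boldsymbol\eta_n,\boldsymbol\eta_{n-1},\dots))$ for a single measurable map $h$. Hence, by Assumption \ref{a:shifts}, $\{Z_n\}$ is itself a Bernoulli shift of the iid sequence $\{\boldsymbol\eta_k\}$ and is therefore stationary and ergodic. To apply the ergodic theorem I only need $\E|Z_0|<\infty$: by the Cauchy--Schwarz inequality $\E|Z_0|\le(\E\la X_0,v_i\ra^2)^{1/2}(\E\la X_0,v_j\ra^2)^{1/2}$, and since $\|v_i\|=1$ we have $\E\la X_0,v_i\ra^2\le\E\|X_0\|^2=\Int\E X_0^2(t)\dt\le\left(\Int\E X_0^4(t)\dt\right)^{1/2}$, which is finite by \eqref{e:eps4}. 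The ergodic theorem then yields $k^{-1}\sum_{n=1}^k Z_n\as\E Z_0$.

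It remains to compute $\E Z_0$. Writing the inner products as integrals and using Fubini's theorem (legitimate by the same integrability bound applied to $\Int\Int\E|X_0(s)X_0(t)|\,|v_i(s)||v_j(t)|\ds\dt$), and recalling that $\E X_0(t)=0$ so that $\E X_0(s)X_0(t)=C(s,t)$, I obtain
$$
\E Z_0=\Int\Int C(s,t)v_i(s)v_j(t)\ds\dt=\Int v_i(s)\left(\Int C(s,t)v_j(t)\dt\right)\ds=\lambda_j\Int v_i(s)v_j(s)\ds,
$$
where the last equality uses the eigen-equation $\Int C(s,t)v_j(t)\dt=\lambda_j v_j(s)$. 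By orthonormality of the eigenfunctions $\Int v_i v_j=1\{i=j\}$, and $\lambda_j=\lambda_i$ when $i=j$, so $\E Z_0=\lambda_i\,1\{i=j\}$, which is the claim. There is no serious obstacle here: the argument is a direct application of the ergodic theorem, and the only points requiring care are the transfer of ergodicity to $\{Z_n\}$ and the Fubini/integrability justification, both of which follow from Assumptions \ref{a:shifts} and \ref{a:Xmoments}. (Assumption \ref{a:eigenu} enters only to guarantee that the $v_i$, $i\le p$, are well defined up to sign, so that the statement is meaningful.)
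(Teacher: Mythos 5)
Your proof is correct and follows essentially the same route as the paper: apply the ergodic theorem to the stationary, ergodic sequence $\la X_n, v_i\ra\la X_n, v_j\ra$ and then identify the limiting expectation via $\E X_0(s)X_0(t)=C(s,t)$, the eigen-equation for $C$, and orthonormality of the $v_j$. The only difference is that you spell out the transfer of ergodicity to $Z_n$ and the integrability/Fubini justifications, which the paper leaves implicit.
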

\begin{proof}
We recall that $X_n(t)$ is stationary and ergodic.
Thus the ergodic theorem shows us that as $k\rightarrow \infty$
$$
\begin{aligned}
\frac{1}{k}\sum_{n=1}^k\la X_n,  v_i\ra \la X_n,\  v_j\ra  &\as \E \int_0^1 X_n(s) v_i(s)\ds \int_0^1 X_n(t) v_j(t)\dt\\
&= \E \int_0^1  v_j(t) \int_0^1  v_i(s) X_n(t)X_n(s)\ds \dt\\
&=  \int_0^1  v_j(t) \int_0^1  v_i(s) \E\left(X_n(t)X_n(s)\right)\ds \dt\\
&=  \int_0^1  v_j(t) \int_0^1  v_i(s) C(s,t)\ds \dt\\
&=  \int_0^1  v_j(t) \lambda_i  v_i(t) \dt\\
&=  \lambda_i\, 1\{i=j\},
\end{aligned}
$$
completing the proof.

\end{proof}
%%%%%%%%%%%%%%%%%%%%%%%%%%%%%%%%%%%%%%%%%%%%%%%%%%%%%%%%%%%%%%%%%%%%%%%%%%%%%%%%%%%%%%%%%%%%%%%%%%%%%%%%%%%%%%%%%%%%%%%%%%%%%%%%
%%%%%%%%%%%%%%%%%%%%%%%%%%%%%%%%%%%%%%%%%%%%%%%%%%%%%%%%%%%%%%%%%%%%%%%%%%%%%%%%%%%%%%%%%%%%%%%%%%%%%%%%%%%%%%%%%%%%%%%%%%%%%%%%

\begin{Lemma}\label{l:convergetozero} If Assumptions \ref{a:shifts}-\ref{a:eigenu} hold, then we have
\begin{equation}\label{e:prod-1}
\frac{1}{N}\sup_{t \in [0,1]} \left| \hat{\mathbf Z}_{\lfloor Nt \rfloor}^T \hat{\mathbf Z}_{\lfloor Nt \rfloor}- t\hat{\mathbf Z}_{N}^T \hat{\mathbf Z}_{N}\right|= \op{1}
\end{equation}
and
\begin{equation}\label{e:prod-2}
\frac{1}{N}\hat{\mathbf Z}_N^T\hat{\mathbf Z}_N \inP {\mathbf C} = {\mathbf I}_q \otimes {\boldsymbol \Lambda},
\end{equation}
 where ${\boldsymbol \Lambda}=\mbox{diag}(\lambda_1,\lambda_2, \ldots ,\lambda_p)$.
%$${\boldsymbol \Lambda} = \begin{pmatrix}
%\lambda_1 &&&0\\
%&\lambda_2\\
%&&\ddots\\
%0&&&\lambda_p\\
%\end{pmatrix}.
%$$
\end{Lemma}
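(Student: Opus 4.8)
The plan is to exploit the Kronecker structure of $\hat{\mathbf Z}(n)$ to collapse both statements to a single $p\times p$ Gram matrix, then to replace the estimated eigenfunctions by the true ones and invoke Theorem \ref{Th:menshov} for the uniformity in $t$. Since $\hat{\mathbf Z}(n)={\mathbf I}_q\otimes\hat{\mathbf M}(n)$, we have $\hat{\mathbf Z}(n)^T\hat{\mathbf Z}(n)={\mathbf I}_q\otimes(\hat{\mathbf M}(n)^T\hat{\mathbf M}(n))$ and hence $\hat{\mathbf Z}_{\lfloor Nt\rfloor}^T\hat{\mathbf Z}_{\lfloor Nt\rfloor}={\mathbf I}_q\otimes\hat{\mathbf G}_{\lfloor Nt\rfloor}$, where $\hat{\mathbf G}_k$ is the $p\times p$ matrix with $(i,j)$ entry $\sum_{n=1}^k\la X_n,\hat v_{i,N}\ra\la X_n,\hat v_{j,N}\ra$. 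Because the factor ${\mathbf I}_q$ is deterministic, $|{\mathbf I}_q\otimes A|=|A|$, so it suffices to prove that $\sup_t|N^{-1}\hat{\mathbf G}_{\lfloor Nt\rfloor}-tN^{-1}\hat{\mathbf G}_N|=\op{1}$ and that $N^{-1}\hat{\mathbf G}_N\inP{\boldsymbol\Lambda}$ entrywise; these give \eqref{e:prod-1} and \eqref{e:prod-2} respectively.

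First I would pass from $\hat v_{i,N}$ to the signed true eigenfunctions. By \eqref{e:eig-v} (which rests only on \eqref{e:sum-xx} and Assumption \ref{a:eigenu}, hence is available here), write $\hat v_{i,N}=\hat c_{i,N}v_i+r_{i,N}$ with $\max_i\|r_{i,N}\|=\Op{N^{-1/2}}$, so that for each $(i,j)$,
\[
\la X_n,\hat v_{i,N}\ra\la X_n,\hat v_{j,N}\ra=\hat c_{i,N}\hat c_{j,N}\la X_n,v_i\ra\la X_n,v_j\ra+\rho_n,
\]
where $\rho_n$ collects the three terms containing at least one factor $\la X_n,r_{\cdot,N}\ra$. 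By Cauchy--Schwarz $|\la X_n,r_{\cdot,N}\ra|\le\|X_n\|\,\|r_{\cdot,N}\|$ and $|\la X_n,v_\cdot\ra|\le\|X_n\|$, so $N^{-1}\sum_{n=1}^{\lfloor Nt\rfloor}|\rho_n|\le N^{-1}\sum_{n=1}^{N}|\rho_n|$ is bounded by a constant multiple of $(\max_i\|r_{i,N}\|+\max_i\|r_{i,N}\|^2)\,N^{-1}\sum_{n=1}^N\|X_n\|^2$. Since $N^{-1}\sum_{n=1}^N\|X_n\|^2\to\E\|X_0\|^2<\infty$ a.s.\ by the ergodic theorem and Assumption \ref{a:Xmoments}, this remainder is $\Op{N^{-1/2}}=\op{1}$ uniformly in $t$. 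Hence, up to a uniform $\op{1}$, each entry of $N^{-1}\hat{\mathbf G}_{\lfloor Nt\rfloor}$ equals $\hat c_{i,N}\hat c_{j,N}$ times $N^{-1}\sum_{n=1}^{\lfloor Nt\rfloor}\la X_n,v_i\ra\la X_n,v_j\ra$.

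It remains to analyze the true-eigenfunction partial sums uniformly in $t$. Writing $\la X_n,v_i\ra\la X_n,v_j\ra=\int_0^1\int_0^1 v_i(s)v_j(u)X_n(s)X_n(u)\ds\du$ and inserting $C(s,u)$, the maximal bound \eqref{e:men-xx} gives $\sup_t\big\|N^{-1}\sum_{n=1}^{\lfloor Nt\rfloor}(X_n(s)X_n(u)-C(s,u))\big\|=\Op{N^{-1/2}\log N}$ in $L^2([0,1]^2)$; by Cauchy--Schwarz against $v_i(s)v_j(u)$ together with the eigen-relation $\int_0^1 C(s,u)v_i(s)\ds=\lambda_i v_i(u)$ and orthonormality,
\[
\sup_{t\in[0,1]}\Big|\frac1N\sum_{n=1}^{\lfloor Nt\rfloor}\la X_n,v_i\ra\la X_n,v_j\ra-\frac{\lfloor Nt\rfloor}{N}\lambda_i 1\{i=j\}\Big|=\Op{N^{-1/2}\log N}=\op{1}.
\]
At $t=1$ this yields \eqref{e:prod-2}: the off-diagonal limits vanish and on the diagonal $\hat c_{i,N}^2=1$, so the random signs disappear and $N^{-1}\hat{\mathbf G}_N\inP{\boldsymbol\Lambda}$. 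Subtracting $t$ times the $t=1$ statement, the factors $\lfloor Nt\rfloor/N$ and $t$ differ by $O(N^{-1})$, the common sign $\hat c_{i,N}\hat c_{j,N}$ cancels, and the difference is $\op{1}$ uniformly, giving \eqref{e:prod-1}. The one point needing care is the uniformity in $t$: rather than appealing to a uniform ergodic theorem I would lean on \eqref{e:men-xx}, which already controls all partial sums simultaneously, and this maximal estimate is the only genuinely nontrivial ingredient.
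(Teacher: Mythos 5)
Your proof is correct, but it resolves the key difficulty --- uniformity in $t$ --- by a genuinely different route than the paper. The shared part is the reduction (the Kronecker collapse to the $p\times p$ Gram matrix is implicit in the paper, which works entrywise with $\la X_n,\hat v_{i,N}\ra\la X_n,\hat v_{j,N}\ra$) and the replacement of $\hat v_{j,N}$ by $\hat c_{j,N}v_j$ via Theorem \ref{Th:eigen}, Cauchy--Schwarz, and the ergodic theorem applied to $\|X_n\|^2$; both arguments do this identically. Where you diverge is afterwards: the paper invokes only the ergodic theorem (Lemma \ref{l:converge}) for the almost sure convergence of $k^{-1}\sum_{n\le k}\la X_n,v_i\ra\la X_n,v_j\ra$ and then obtains uniformity over $t\in[0,1]$ by a two-scale splitting argument --- separating $t\le K_0/N$ from $t\ge K_0/N$, bounding the first range by finitely many terms and the second by the event in its inequality (\ref{e:ineq}) --- which yields $\op{1}$ with no rate. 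You instead invoke the maximal inequality (\ref{e:men-xx}) of Theorem \ref{Th:menshov}, pair it with Cauchy--Schwarz against $v_i(s)v_j(u)$ and the eigen-relation $\int_0^1 C(s,u)v_i(s)\,ds=\lambda_iv_i(u)$, and get all partial sums controlled at once with the explicit rate $\Op{N^{-1/2}\log N}$. Your route is shorter and quantitative; the paper's route uses only soft ergodicity for this step (fourth moments enter its proof only through Theorem \ref{Th:eigen}) and avoids leaning on the heavier Menshov machinery. One bookkeeping caveat: Theorem \ref{Th:menshov} as stated lists Assumption \ref{a:PSImoments} among its hypotheses, which Lemma \ref{l:convergetozero} does not assume; this listing is evidently redundant (the theorem separately says ``if in addition Assumption \ref{a:PSImoments} is also satisfied'' for (\ref{e:men-yy})), and (\ref{e:men-xx}) concerns only $X_n$, so its proof needs only Assumptions \ref{a:shifts}--\ref{a:k-dependent}. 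You already make exactly this kind of justification explicit for (\ref{e:eig-v}); it would be worth one sentence doing the same for (\ref{e:men-xx}).
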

\begin{proof} First we show that for $\delta>0$ and  $\gamma >0$  there are $K_0$ and $N_0$ such that
\begin{equation}\label{e:ineq}
\P\biggl(\sup_{K_0\leq k \leq N}\biggl|\frac{1}{k}\sum_{n=1}^k\la X_n,  \hat{v}_{i,N}\ra \la X_n,  \hat{v}_{j,N}\ra -  \lambda_i 1\{i=j\}\biggl|>\delta
 \biggl)\leq \gamma,
\end{equation}
if $N\geq N_0$. Note that by the Cauchy-Schwarz inequality we have
\begin{equation}\notag
%\begin{align}
\hspace{-2 cm}\biggl|\frac{1}{k}\sum_{n=1}^k\bigl(\la X_n,  \hat{v}_{i,N}\ra \la X_n,  \hat{v}_{j,N}\ra  - \la X_n,  \hat{c}_{i,N}{v}_i\ra \la X_n, \hat{c}_{j,N} {v}_j \ra \bigl)
\biggl|\notag
\end{equation}
\begin{equation}\notag
\leq \frac{1}{k}\sum_{n=1}^k\|X_n\|^2(\|\hat{v}_{i,N}-\hat{c}_{i,N}{v}_{i}\| +
\|\hat{v}_{j,N}-\hat{c}_{j,N}{v}_{j}\|).\notag
%\end{align}
\end{equation}
Using  the ergodic theorem we get that
$$
\sup_{1\leq k <\infty}\frac{1}{k}\sum_{n=1}^k\|X_n\|^2<\infty \quad \mbox{a.s.},
$$
so (\ref{e:ineq}) follows from Theorem \ref{Th:eigen} and Lemma \ref{l:converge}.\\

Assume $N>N_0$.  It now follows that
$$
\begin{aligned}
&\P\left(\sup_{t\in[0,1]}\left|\hat{\mathbf Z}_{\lfloor Nt \rfloor}^T \hat{\mathbf Z}_{\lfloor Nt \rfloor}- t\hat{\mathbf Z}_{N}^T \hat{\mathbf Z}_{N}\right| >4 N\delta \right)\\
&\le\P\left(\sup_{0\le t\le {K_0}/{N}}\left|\hat{\mathbf Z}_{\lfloor Nt \rfloor}^T \hat{\mathbf Z}_{\lfloor Nt \rfloor}- t\hat{\mathbf Z}_{N}^T \hat{\mathbf Z}_{N}\right| + \sup_{ {K_0}/{N}\leq t \leq 1}\left|\hat{\mathbf Z}_{\lfloor Nt \rfloor}^T \hat{\mathbf Z}_{\lfloor Nt \rfloor}- t\hat{\mathbf Z}_{N}^T \hat{\mathbf Z}_{N}\right|>4 N\delta \right)\\
&\le\P\left(\sup_{0\leq t\le {K_0}/{N}}\left|\frac{\hat{\mathbf Z}_{\lfloor Nt \rfloor}^T \hat{\mathbf Z}_{\lfloor Nt \rfloor}- t\hat{\mathbf Z}_{N}^T \hat{\mathbf Z}_{N}}{N}\right| +
\sup_{ {K_0}/{N}\le t \le 1}\left|\frac{\hat{\mathbf Z}_{\lfloor Nt \rfloor}^T \hat{\mathbf Z}_{\lfloor Nt \rfloor}}{Nt}- \frac{\hat{\mathbf Z}_{N}^T \hat{\mathbf Z}_{N}}{N}\right|>4 \delta \right)\\
&\le\P\left(\sup_{0\le  t\le {K_0}/{N}}\left|\frac{\hat{\mathbf Z}_{\lfloor Nt \rfloor}^T \hat{\mathbf Z}_{\lfloor Nt \rfloor}- t\hat{\mathbf Z}_{N}^T \hat{\mathbf Z}_{N}}{N}\right| +
\sup_{ {K_0}/{N}\leq t \leq 1}\left|\frac{\hat{\mathbf Z}_{\lfloor Nt \rfloor}^T \hat{\mathbf Z}_{\lfloor Nt \rfloor}}{Nt}- {\mathbf C}\right| + \left|\frac{\hat{\mathbf Z}_{N}^T \hat{\mathbf Z}_N}{N}-{\mathbf C}\right|>4 \delta \right)\\
&\le\P\left(\max_{1\leq k \leq K_0}\left|\frac{\hat{\mathbf Z}_{k}^T \hat{\mathbf Z}_{k}}{N}\right| + \left| \frac{K_0\hat{\mathbf Z}_{N}^T \hat{\mathbf Z}_{N}}{N^2}\right| +
\max_{ {K_0}\le k \le N}\left|\frac{\hat{\mathbf Z}_{k}^T \hat{\mathbf Z}_{k}}{k}- {\mathbf C}\right| + \left|\frac{\hat{\mathbf Z}_{N}^T \hat{\mathbf Z}_N}{N}-{\mathbf C}\right|>4 \delta \right).
\end{aligned}
$$
For every $K_0$ we have that $\P(\max_{1\leq k \leq K_0}|{\hat{\mathbf Z}_{k}^T \hat{\mathbf Z}_{k}}|/{N}>\delta)\rightarrow 0$ and by  (\ref{e:ineq})   $\P(|{K_0\hat{\mathbf Z}_{N}^T \hat{\mathbf Z}_{N}}|/{N^2}> \delta)\rightarrow 0$ as $N\rightarrow \infty$. Using   (\ref{e:ineq}) again  we conclude $\P(\max_{ {K_0}\le k \le N}|{\hat{\mathbf Z}_{k}^T \hat{\mathbf Z}_{k}}/{k}- {\mathbf C}|>\delta)\leq \gamma.$ Since $\gamma$ and $\delta$ can be chosen as small as we wish, Lemma \ref{l:convergetozero} is established.

\end{proof}
%%%%%%%%%%%%%%%%%%%%%%%%%%%%%%%%%%%%%%%%%%%%%%%%%%%%%%%%%%%%%%%%%%%%%%%%%%%%%%%%%%%%%%%%%%%%%%%%%%%%%%%%%%%%%%%%%%%%%%%%%%%%%%%%
%%%%%%%%%%%%%%%%%%%%%%%%%%%%%%%%%%%%%%%%%%%%%%%%%%%%%%%%%%%%%%%%%%%%%%%%%%%%%%%%%%%%%%%%%%%%%%%%%%%%%%%%%%%%%%%%%%%%%%%%%%%%%%%%
We continue with the properties of $\hat{\mathbf Z}_{{\lfloor Nt \rfloor}}^T \hat{\boldsymbol \Delta}_{{\lfloor Nt \rfloor}}$.  First we observe that
\begin{equation}\label{eq:zzz}
\begin{aligned}
\hat{\mathbf Z}_{{\lfloor Nt \rfloor}}^T \hat{\boldsymbol \Delta}_{{\lfloor Nt \rfloor}} &= \sum_{\ell=1}^{\lfloor Nt \rfloor} \hat{\mathbf Z}^T{(\ell)} \hat{\boldsymbol \Delta}{(\ell)}\\
&=\sum_{\ell=1}^{\lfloor Nt \rfloor} \vectorization \left(\{\la X_{\ell}, \hat{v}_{j,N}\ra  \la \epsilon^{**}_{\ell},\ \hat{w}_{i,N}\ra, 1\le i \le q, 1\le j \le p\}^T\right).\\
\end{aligned}
\end{equation}
We note that
$$
\epsilon_{\ell}^{**}(t)=\epsilon_{\ell}(t) + \eta_{{\ell},1}(t) + \eta_{{\ell},2}(t) + \eta_{{\ell},3}(t) + \eta_{{\ell},4}(t) + \eta_{{\ell},5}(t),
$$
with
$$
\begin{aligned}
\eta_{n,1}(t)&=\sum_{i=q+1}^{\infty} \sum_{j=1}^{\infty} \psi_{i,j} w_i(t) \int_0^1 v_j(s) X_n(s)\ds
=\sum_{i=q+1}^{\infty} \sum_{j=1}^{\infty} \psi_{i,j} w_i(t) \la v_j, X_n\ra,\\
\eta_{n,2}(t)&=\sum_{i=1}^{q} \sum_{j=p+1}^{\infty} \psi_{i,j} w_i(t) \int_0^1 v_j(s) X_n(s)\ds
=\sum_{i=1}^{q} \sum_{j=p+1}^{\infty} \psi_{i,j} w_i(t) \la v_j, X_n\ra,\\
\eta_{n,3}(t)&=\sum_{i=1}^{q} \sum_{j=1}^{p} \hat{d}_{i,N} \psi_{i,j} \hat{c}_{j,N} \hat{d}_{i,N} w_i(t) \int_0^1 \left( \hat{c}_{j,N} v_j(s)-\hat{v}_{j,N}(s)\right) X_n(s)\ds\\
&=\sum_{i=1}^{q} \sum_{j=1}^{p} \hat{d}_{i,N} \psi_{i,j} \hat{c}_{j,N} \hat{d}_{i,N} w_i(t) \la \left( \hat{c}_{j,N} v_j-\hat{v}_{j,N}\right),\ X_n\ra,\\
\eta_{n,4}(t)&=\sum_{i=1}^{q} \sum_{j=1}^{p} \hat{d}_{i,N} \psi_{i,j} \hat{c}_{j,N} \left(\hat{d}_{i,N} w_i(t)-\hat{w}_{i,N}(t)\right) \int_0^1 \hat{c}_{j,N} v_j(s) X_n(s)\ds\\
&=\sum_{i=1}^{q} \sum_{j=1}^{p} \hat{d}_{i,N} \psi_{i,j} \hat{c}_{j,N} \left(\hat{d}_{i,N} w_i(t)-\hat{w}_{i,N}(t)\right) \la \hat{c}_{j,N} v_j,  X_n\ra,\\
\eta_{n,5}(t)&= \sum_{i=1}^{q} \sum_{j=1}^{p} \hat{d}_{i,N} \psi_{i,j} \hat{c}_{j,N} \left(\hat{w}_{i,N}(t)- \hat{d}_{i,N} w_i(t)\right) \int_0^1\left( \hat{c}_{j,N} v_j(s)-\hat{v}_{j,N}(s)\right) X_n(s)\ds\\
&= \sum_{i=1}^{q} \sum_{j=1}^{p} \hat{d}_{i,N} \psi_{i,j} \hat{c}_{j,N} \left(\hat{w}_{i,N}(t)- \hat{d}_{i,N} w_i(t)\right) \la \left( \hat{c}_{j,N} v_j-\hat{v}_{j,N}\right),  X_n\ra .\\
\end{aligned}
$$
In particular, we can write
\begin{equation}\label{eq:6partsprod}
\begin{aligned}
\la \epsilon^{**}_{\ell},\ \hat{w}_{i,N}\ra &= \la\epsilon_{\ell}, \hat{w}_{i,N}\ra+ \la\eta_{{\ell},1}, \hat{w}_{i,N}\ra+\la\eta_{{\ell},2}, \hat{w}_{i,N}\ra\\
&\qquad +\la\eta_{{\ell},3}, \hat{w}_{i,N}\ra+ \la\eta_{{\ell},4}, \hat{w}_{i,N}\ra+ \la\eta_{{\ell},5}, \hat{w}_{i,N}\ra.
\end{aligned}
\end{equation}
We show that $\hat{\mathbf Z}_{{\lfloor Nt \rfloor}}^T \hat{\boldsymbol \Delta}_{{\lfloor Nt \rfloor}}$ can be written as the sum of weakly dependent variables and an additional term which is just $t$ times a random variable matrix. The additional term reflects the replacement of $\Psi$ with a finite sum and the estimation of the eigenfunctions $\{w_i, 1\leq i \leq q\}$ and $\{v_j, 1\leq i \leq p\}$. The drift term is given by
$$
{\mathbf R}_N = \vectorization \left(\{R_{N}(i,j), 1\le i\le q, 1\le j \le p\}^T\right),
$$
where
$$
\begin{aligned}
R_N(i,j)&=R_N^{(1)}(i,j)+R_N^{(2)}(i,j)+R_N^{(3)}(i,j)+R_N^{(4)}(i,j),\\
R_N^{(1)}(i,j)&= \hat{c}_{j,N} \lambda_j \sum_{r=q+1}^{\infty} \psi_{r,j} \Int w_r(x)\left(\hat{w}_{i,N}(x)-\hat{d}_{i,N} w_i(x)\right)\dx,\\
R_N^{(2)}(i,j)&=\hat{d}_{i,N} \Int \left(\hat{v}_{j,N}(z) - \hat{c}_{j,N}v_j(z)\right) \sum_{n=p+1}^{\infty} \psi_{i,n}  \lambda_n v_n(z)\dz,\\
R_N^{(3)}(i,j)&= \hat{c}_{j,N} \hat{d}_{i,N} \lambda_j \sum_{n=1}^p \psi_{i,n} \hat{c}_{n,N} \Int \left(\hat{c}_{n,N}v_{n}(s) - \hat{v}_{n,N}(s)\right)v_j(s)\ds,\\
R_N^{(4)}(i,j)&=\hat{c}_{j,N} \hat{d}_{i,N} \lambda_j \sum_{r=1}^q \hat{d}_{r,N} \psi_{r,j} \Int w_i(x) \left(\hat{d}_{r,N} w_r(x) - \hat{w}_{r,N}(x)\right)\dx.
\end{aligned}
$$

%%%%%%%%%%%%%%%%%%%%%%%%%%%%%%%%%%%%%%%%%%%%%%%%%%%%%%%%%%%%%%%%%%%%%%%%%%%%%%%%%%%%%%%%%%%%%%%%%%%%%%%%%%%%%%%%%%%%%%%%%%%%%%%%
%%%%%%%%%%%%%%%%%%%%%%%%%%%%%%%%%%%%%%%%%%%%%%%%%%%%%%%%%%%%%%%%%%%%%%%%%%%%%%%%%%%%%%%%%%%%%%%%%%%%%%%%%%%%%%%%%%%%%%%%%%%%%%%%

%%%%%%%%%%%%%%%%%%%%%%%%%%%%%%%%%%%%%%%%%%%%%%%%%%%%%%%%%%%%%%%%%%%%%%%%%%%%%%%%%%%%%%%%%%%%%%%%%%%%%%%%%%%%%%%%%%%%%%%%%%%%%%%%
%%%%%%%%%%%%%%%%%%%%%%%%%%%%%%%%%%%%%%%%%%%%%%%%%%%%%%%%%%%%%%%%%%%%%%%%%%%%%%%%%%%%%%%%%%%%%%%%%%%%%%%%%%%%%%%%%%%%%%%%%%%%%%%%

\begin{Lemma}\label{l:epsilon} If Assumptions \ref{a:shifts}-\ref{a:eigenu} hold, then we have
$$
\sup_{t\in[0,1]} \left|\sum_{\ell = 1}^{\lfloor Nt \rfloor} \la X_{\ell}, \hat{v}_{j,N}\ra \la\epsilon_{\ell},\ \hat{w}_{i,N}\ra  - \hat{c}_{j,N}\hat{d}_{i,N}T_{\lfloor Nt \rfloor}^{(1)}(i,j)\right|=\Op{\log N},
$$
where
$$
T_{\lfloor Nt \rfloor}^{(1)}(i,j)=\sum_{\ell = 1}^{\lfloor Nt \rfloor} \la X_{\ell}, v_j\ra \la\epsilon_{\ell},\ w_i\ra .
$$
\end{Lemma}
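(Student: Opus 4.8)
The plan is to expand the summand so that the leading term reproduces $\hat c_{j,N}\hat d_{i,N}T^{(1)}_{\lfloor Nt\rfloor}(i,j)$ exactly, and then to control the remaining cross terms by combining the eigenfunction rates of Theorem \ref{Th:eigen} with the maximal inequality of Theorem \ref{Th:menshov}. Concretely, writing $\delta_v = \hat{v}_{j,N}-\hat{c}_{j,N}v_j$ and $\delta_w=\hat{w}_{i,N}-\hat{d}_{i,N}w_i$ (for the fixed $i,j$ in the statement), bilinearity of the inner product gives
$$
\la X_\ell,\hat{v}_{j,N}\ra\la\epsilon_\ell,\hat{w}_{i,N}\ra=\hat{c}_{j,N}\hat{d}_{i,N}\la X_\ell,v_j\ra\la\epsilon_\ell,w_i\ra+\hat{c}_{j,N}\la X_\ell,v_j\ra\la\epsilon_\ell,\delta_w\ra+\hat{d}_{i,N}\la X_\ell,\delta_v\ra\la\epsilon_\ell,w_i\ra+\la X_\ell,\delta_v\ra\la\epsilon_\ell,\delta_w\ra.
$$
Summing over $1\le\ell\le\lfloor Nt\rfloor$, the first term is precisely $\hat{c}_{j,N}\hat{d}_{i,N}T^{(1)}_{\lfloor Nt\rfloor}(i,j)$, so it remains to bound the three cross terms uniformly in $t$.

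The key device is to factor each partial sum through the bivariate process $S_k(t,s)=\sum_{n=1}^k X_n(t)\epsilon_n(s)$. For any $L^2$ functions $a,b$ on $[0,1]$,
$$
\sum_{\ell=1}^k\la X_\ell,a\ra\la\epsilon_\ell,b\ra=\int_0^1\int_0^1 a(t)b(s)S_k(t,s)\dt\ds,
$$
so the Cauchy--Schwarz inequality on the unit square yields
$$
\left|\sum_{\ell=1}^k\la X_\ell,a\ra\la\epsilon_\ell,b\ra\right|\le\|S_k\|\,\|a\|\,\|b\|.
$$
Since $\delta_v$ and $\delta_w$ are computed from the full sample of size $N$ and therefore do not depend on the truncation level $k$, they factor out of the supremum. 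Using that $\sup_{t\in[0,1]}$ is the same as $\max_{1\le k\le N}$ after setting $k=\lfloor Nt\rfloor$, I would bound each cross term, for example
$$
\sup_{t\in[0,1]}\left|\sum_{\ell=1}^{\lfloor Nt\rfloor}\la X_\ell,v_j\ra\la\epsilon_\ell,\delta_w\ra\right|\le\max_{1\le k\le N}\|S_k\|\cdot\|v_j\|\cdot\|\delta_w\|.
$$

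Now I would insert the rates. By (\ref{e:men-xe}) of Theorem \ref{Th:menshov}, $\max_{1\le k\le N}\|S_k\|=\Op{N^{1/2}\log N}$, while $\|v_j\|=\|w_i\|=1$ and, by (\ref{e:eig-w}) and (\ref{e:eig-v}) of Theorem \ref{Th:eigen}, $\|\delta_v\|=\Op{N^{-1/2}}$ and $\|\delta_w\|=\Op{N^{-1/2}}$. Multiplying, the first two cross terms are each $\Op{N^{1/2}\log N}\cdot\Op{N^{-1/2}}=\Op{\log N}$, and the third is $\Op{N^{1/2}\log N}\cdot\Op{N^{-1/2}}\cdot\Op{N^{-1/2}}=\op{\log N}$; the sign factors are harmless since $|\hat{c}_{j,N}|=|\hat{d}_{i,N}|=1$. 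Collecting the bounds gives the claimed $\Op{\log N}$. The main obstacle is securing the \emph{uniformity} in $t$: one must not settle for the pointwise $\Op{1}$ bound of Theorem \ref{Th:sums} but instead invoke the maximal version in Theorem \ref{Th:menshov}, which is exactly where the $\log N$ factor originates. The factorization through $S_k$ is what makes this succeed cleanly, because the eigenfunction errors are independent of $k$ and pull out of the maximum, leaving only $\max_k\|S_k\|$ to be handled by Menshov's inequality.
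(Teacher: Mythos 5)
Your proposal is correct and follows essentially the same route as the paper: an add-and-subtract decomposition of the empirical eigenfunctions, the Cauchy--Schwarz factorization of each cross term through $\left\|\sum_{\ell\le k}X_\ell(x)\epsilon_\ell(s)\right\|$, and then the rates from Theorem \ref{Th:eigen} combined with the maximal bound (\ref{e:men-xe}) of Theorem \ref{Th:menshov}. The only cosmetic difference is that you expand into four terms while the paper keeps two (retaining $\hat{w}_{i,N}$ with $\|\hat{w}_{i,N}\|=1$ in one factor), which changes nothing of substance.
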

\begin{proof} We note that
$$
\begin{aligned}
&\sup_{t\in[0,1]} \left|\sum_{\ell = 1}^{\lfloor Nt \rfloor} \la X_{\ell}, \hat{v}_{j,N}\ra  \la \epsilon_{\ell}, \hat{w}_{i,N}\ra  - \sum_{\ell = 1}^{\lfloor Nt \rfloor} \la X_{\ell}, \hat{c}_{j,N} v_j\ra  \la \epsilon_{\ell}, \hat{d}_{i,N} w_i\ra\right|\\
&\leq\sup_{t\in[0,1]} \left|\sum_{\ell = 1}^{\lfloor Nt \rfloor} \la X_{\ell}, \hat{v}_{j,N}- \hat{c}_{j,N} v_j\ra \la \epsilon_{\ell}, \hat{w}_{i,N}\ra \right| + \sup_{t\in[0,1]}\left|\sum_{\ell = 1}^{\lfloor Nt \rfloor} \la X_{\ell}, \hat{c}_{j,N} v_j\ra  \la \epsilon_{\ell}, \hat{w}_{i,N} - \hat{d}_{i,N} w_i\ra \right|.
\end{aligned}
$$
Using the Cauchy-Schwarz inequality we get that
$$
\begin{aligned}
\sup_{t\in[0,1]} &\left|\sum_{\ell = 1}^{\lfloor Nt \rfloor} \la X_{\ell}, \hat{v}_{j,N}- \hat{c}_{j,N} v_j\ra \la \epsilon_{\ell}, \hat{w}_{i,N}\ra \right|\\
&\leq \sup_{t\in[0,1]}\left\|\sum_{\ell = 1}^{\lfloor Nt \rfloor} X_{\ell}(x)\epsilon_\ell(s)\right\|\left( \|\hat{v}_{j,N}- \hat{c}_{j,N} v_j \|\|\hat{w}_{i,N}\|\right)\\
&=\Op{\log N},
\end{aligned}
$$
on account of (\ref{e:eig-v}), Theorem  \ref{Th:menshov} and $\|\hat{w}_{i,N}\|=1$. Similar arguments give that
$$
\sup_{t\in[0,1]}\left|\sum_{\ell = 1}^{\lfloor Nt \rfloor} \la X_{\ell}, \hat{c}_{j,N} v_j\ra  \la \epsilon_{\ell}, \hat{w}_{i,N} - \hat{d}_{i,N} w_i\ra \right|=\Op{\log N},
$$
completing the proof of the lemma.
\end{proof}

%%%%%%%%%%%%%%%%%%%%%%%%%%%%%%%%%%%%%%%%%%%%%%%%%%%%%%%%%%%%%%%%%%%%%%%%%%%%%%%%%%%%%%%%%%%%%%%%%%%%%%%%%%%%%%%%%%%%%%%%%%%%%%%%
%%%%%%%%%%%%%%%%%%%%%%%%%%%%%%%%%%%%%%%%%%%%%%%%%%%%%%%%%%%%%%%%%%%%%%%%%%%%%%%%%%%%%%%%%%%%%%%%%%%%%%%%%%%%%%%%%%%%%%%%%%%%%%%%
\begin{Lemma}\label{l:eta_1} If Assumptions \ref{a:shifts}-\ref{a:PSImoments} hold, then we have
$$
\sup_{t\in[0,1]} \left| \sum_{\ell=1}^{\lfloor Nt \rfloor} \la X_{\ell}, \hat{v}_{j,N}\ra \la \eta_{\ell, 1}, \hat{w}_{i,N}\ra  - {\lfloor Nt \rfloor} R_N^{(1)}(i,j)\right| =\Op{\log N}.
$$
\end{Lemma}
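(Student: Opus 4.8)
The plan is to reduce the whole expression to a quadratic form in the $X_\ell$'s and then split the empirical second-moment kernel into a centered part (a fluctuation controlled by the uniform bound of Theorem \ref{Th:menshov}) and a deterministic part (which produces the drift $\lfloor Nt \rfloor R_N^{(1)}(i,j)$). The first thing I would do is rewrite the inner product against $\eta_{\ell,1}$ in a way that exposes a crucial smallness. Expanding $\eta_{\ell,1}$ and using orthonormality of the $\{v_m\}$, one gets $\la \eta_{\ell,1}, \hat{w}_{i,N}\ra = \la X_\ell, f_N\ra$ with
$$
f_N(s)=\sum_{r=q+1}^{\infty}\sum_{m=1}^{\infty}\psi_{r,m}\la w_r,\hat{w}_{i,N}\ra v_m(s).
$$
The decisive point is that $f_N$ is of order $N^{-1/2}$ in $L^2$. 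Indeed, since $i\le q<r$ the eigenfunctions are orthogonal, so $\la w_r,\hat{w}_{i,N}\ra=\la w_r,\hat{w}_{i,N}-\hat{d}_{i,N}w_i\ra$ for each $r>q$, and Bessel's inequality gives $\sum_{r>q}\la w_r,\hat{w}_{i,N}\ra^2\le\|\hat{w}_{i,N}-\hat{d}_{i,N}w_i\|^2=\Op{N^{-1}}$ by \eqref{e:eig-w}. Combining this with the Cauchy--Schwarz bound $\|f_N\|\le(\sum_{r>q}\la w_r,\hat{w}_{i,N}\ra^2)^{1/2}(\sum_{r>q}\sum_m\psi_{r,m}^2)^{1/2}$ and the finiteness of the second factor guaranteed by Assumption \ref{a:PSImoments}, I obtain $\|f_N\|=\Op{N^{-1/2}}$.

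Next I would write the summand as a bilinear form against the empirical covariance kernel,
$$
\sum_{\ell=1}^{\lfloor Nt \rfloor}\la X_\ell,\hat{v}_{j,N}\ra\la X_\ell,f_N\ra=\Int\Int\Bigl(\sum_{\ell=1}^{\lfloor Nt \rfloor}X_\ell(x)X_\ell(s)\Bigr)\hat{v}_{j,N}(x)f_N(s)\dx\ds,
$$
and split $\sum_\ell X_\ell(x)X_\ell(s)=\sum_\ell(X_\ell(x)X_\ell(s)-C(x,s))+\lfloor Nt \rfloor C(x,s)$. The centered part is a fluctuation term bounded, uniformly in $t$, via Cauchy--Schwarz by
$$
\max_{1\le k\le N}\Bigl\|\sum_{\ell=1}^{k}\bigl(X_\ell(x)X_\ell(s)-C(x,s)\bigr)\Bigr\|\cdot\|\hat{v}_{j,N}\|\cdot\|f_N\|.
$$
Here the maximum is $\Op{N^{1/2}\log N}$ by \eqref{e:men-xx}, $\|\hat{v}_{j,N}\|=1$, and $\|f_N\|=\Op{N^{-1/2}}$, so the fluctuation is $\Op{\log N}$; this is precisely where the $N^{-1/2}$ smallness of $f_N$ is needed to absorb the $N^{1/2}$ coming from the partial-sum process.

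For the deterministic part $\lfloor Nt \rfloor\Int\Int C(x,s)\hat{v}_{j,N}(x)f_N(s)\dx\ds$ I would replace $\hat{v}_{j,N}$ by $\hat{c}_{j,N}v_j$. Using $\Int C(x,s)v_j(x)\dx=\lambda_j v_j(s)$ together with $\la v_j,f_N\ra=\sum_{r>q}\psi_{r,j}\la w_r,\hat{w}_{i,N}\ra$, the principal piece equals exactly $\lfloor Nt \rfloor\hat{c}_{j,N}\lambda_j\la v_j,f_N\ra=\lfloor Nt \rfloor R_N^{(1)}(i,j)$. The replacement error is controlled by the operator-norm estimate $\|\Int C(\cdot,s)h(\cdot)\,d(\cdot)\|\le\lambda_1\|h\|$ applied to $h=\hat{v}_{j,N}-\hat{c}_{j,N}v_j$, giving a bound $\lfloor Nt \rfloor\lambda_1\|\hat{v}_{j,N}-\hat{c}_{j,N}v_j\|\,\|f_N\|\le N\cdot\Op{N^{-1/2}}\cdot\Op{N^{-1/2}}=\Op{1}$ uniformly in $t$, by \eqref{e:eig-v}. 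Adding the $\Op{\log N}$ fluctuation and this $\Op{1}$ term yields the assertion.

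The main obstacle is recognizing and quantifying the cancellation that forces $\|f_N\|=\Op{N^{-1/2}}$. A naive Cauchy--Schwarz bound on the centered quadratic form is only $\Op{N^{1/2}\log N}$, which is far larger than the claimed $\Op{\log N}$; the correct rate is recovered solely because the out-of-range coefficients $\la w_r,\hat{w}_{i,N}\ra$ with $r>q$ are themselves of order $N^{-1/2}$, a consequence of orthogonality of the $\{w_r\}$ and the eigenfunction rate \eqref{e:eig-w}. A minor secondary point is justifying the interchange of the infinite $\Psi$-expansion with the integrations, which follows from $\Psi\in L^2$ under Assumption \ref{a:PSImoments}.
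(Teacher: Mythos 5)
Your proof is correct and follows essentially the same route as the paper's: both exploit the orthogonality of the $w_r$, $r>q$, to $w_i$ so that only the $\Op{N^{-1/2}}$-small difference $\hat{w}_{i,N}-\hat{d}_{i,N}w_i$ enters (your Bessel-inequality bound on $\|f_N\|$ is the same cancellation the paper uses via $\la \eta_{\ell,1},w_i\ra=0$), then split the empirical covariance $\sum_\ell X_\ell(x)X_\ell(s)$ into its centered part, controlled by Theorem \ref{Th:menshov}, plus $\lfloor Nt\rfloor C(x,s)$, which produces the drift $\lfloor Nt\rfloor R_N^{(1)}(i,j)$ through the eigenfunction property of $C$, with all replacement errors handled by Theorem \ref{Th:eigen}. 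The only difference is organizational: the paper splits $\la X_\ell,\hat{v}_{j,N}\ra$ into $\hat{c}_{j,N}v_j$ plus a remainder at the outset and treats the resulting cross terms separately, whereas you keep $\hat{v}_{j,N}$ intact in the fluctuation term (using $\|\hat{v}_{j,N}\|=1$) and defer the replacement $\hat{v}_{j,N}\to\hat{c}_{j,N}v_j$ to the drift term, a harmless streamlining.
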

\begin{proof}
Using the orthogonality of the $w_i$'s we get that
$$
\begin{aligned}
\la \eta_{\ell, 1}, w_i\ra&=\Int \sum_{r=q+1}^{\infty} \sum_{n=1}^{\infty} \psi_{r,n} w_r(x) \left\{\Int v_n(s) X_{\ell}(s)\ds\right\} w_i(x)\dx \\
&= \sum_{r=q+1}^{\infty} \sum_{n=1}^{\infty} \psi_{r,n} \Int v_n(s) X_{\ell}(s)\ds \left(\Int w_i(x)w_r(x)\dx\right) \\
&=0.
\end{aligned}
$$
Therefore we have
$$
\begin{aligned}
\la X_{\ell},& \hat{v}_{j,N}\ra \la \eta_{\ell, 1}, \hat{w}_{i,N}\ra \\
%&= \la X_{\ell}, \hat{v}_{j,N}- \hat{c}_{j,N}v_j\ra \la\eta_{\ell, 1}, \hat{w}_{i,N}\ra + \la X_{\ell},  %\hat{c}_{j,N}v_j\ra \la \eta_{\ell, 1}, \hat{w}_{i,N}\ra\\
&=\la X_{\ell}, \hat{v}_{j,N}- \hat{c}_{j,N}v_j\ra \la\eta_{\ell, 1}, \hat{w}_{i,N}-\hat{d}_{i,N}w_i\ra%\\
%\hspace{2 cm} &\hspace{2 cm}
+ \la X_{\ell},  \hat{c}_{j,N}v_j\ra \la\eta_{\ell, 1}, \hat{w}_{i,N}-\hat{d}_{i,N}w_i\ra .
\end{aligned}
$$
Now,
$$
\begin{aligned}
\sum_{\ell=1}^{\lfloor Nt \rfloor} \la X_{\ell}, \hat{c}_{j,N}v_j\ra \la\eta_{\ell, 1}, \hat{w}_{i,N}-\hat{d}_{i,N}w_i\ra
%&=\hat{c}_{j,N}\sum_{\ell=1}^{\lfloor Nt \rfloor} \int_0^1 \int_0^1 X_{\ell}(z) v_j(z) %\sum_{r=q+1}^{\infty} \sum_{n=1}^{\infty} \psi_{r,n} w_r(x) \int_0^1 v_n(s) %X_{\ell}(s)\left(\hat{w}_{i,N}(x)-\hat{d}_{i,N}w_i(x)\right)\dz \ds \dx\\
%&=\hat{c}_{j,N}\int_0^1 \int_0^1 \int_0^1    v_j(z) \sum_{r=q+1}^{\infty} \sum_{n=1}^{\infty} \psi_{r,n} %w_r(x)  v_n(s) \left(\hat{w}_{i,N}(x)-\hat{d}_{i,N}w_i(x)\right)\sum_{\ell=1}^{\lfloor Nt \rfloor} %X_{\ell}(z) X_{\ell}(s)\dz \ds \dx\\
=A_{\lfloor Nt \rfloor}^{(1)}+A_{\lfloor Nt \rfloor}^{(2)},
\end{aligned}
$$
where
$$
\begin{aligned}
A_{\lfloor Nt \rfloor}^{(1)}&=\hat{c}_{j,N}\Int \Int \Int    v_j(z) \sum_{r=q+1}^{\infty} \sum_{n=1}^{\infty} \psi_{r,n} w_r(x)  v_n(s) \left(\hat{w}_{i,N}(x)-\hat{d}_{i,N}w_i(x)\right) \\
&\qquad \times \left(\sum_{\ell=1}^{\lfloor Nt \rfloor} X_{\ell}(z) X_{\ell}(s) - {\lfloor Nt \rfloor}C(z,s)\right)\dz \ds \dx\\
\end{aligned}
$$
and
$$
\begin{aligned}
A_{\lfloor Nt \rfloor}^{(2)}&={\lfloor Nt \rfloor}\hat{c}_{j,N}\int_0^1 \int_0^1 \int_0^1    v_j(z) \sum_{r=q+1}^{\infty} \sum_{n=1}^{\infty} \psi_{r,n} w_r(x)  v_n(s) \left(\hat{w}_{i,N}(x)-\hat{d}_{i,N}w_i(x)\right)C(z,s)\dz \ds \dx\\
%&={\lfloor Nt \rfloor}\hat{c}_{j,N}\Int \Int \lambda_j v_j(s) \sum_{r=q+1}^{\infty} \sum_{n=1}^{\infty} %\psi_{r,n} w_r(x)  v_n(s) \left(\hat{w}_{i,N}(x)-\hat{d}_{i,N}w_i(x)\right)\ds \dx\\
&={\lfloor Nt \rfloor}\hat{c}_{j,N}\Int  \lambda_j \sum_{r=q+1}^{\infty}  \psi_{r,j} w_r(x)  \left(\hat{w}_{i,N}(x)-\hat{d}_{i,N}w_i(x)\right) \dx\\
&={\lfloor Nt \rfloor} R_N^{(1)}(i,j),
\end{aligned}
$$
where we used that the $v_j$'s  are orthonormal eigenfunctions of $C$.\\

Applying again (\ref{e:eig-w}) and (\ref{e:men-xx}) we conclude
\begin{equation}\notag
\begin{aligned}
\sup_{t\in [0,1]} \left| A_{\lfloor Nt \rfloor}^{(1)} \right|% &\le \left\| v_j(z) \sum_{r=q+1}^{\infty} %\sum_{n=1}^{\infty} \psi_{r,n} w_r(x)  v_n(s)\right\| \left\|\hat{w}_{i,N}(x)-\hat{d}_{i,N}w_i(x)\right\|  %\notag\\
%&\qquad \times \sup_{t\in [0,1]}\left\|\sum_{\ell=1}^{\lfloor Nt \rfloor} X_{\ell}(z) X_{\ell}(s) - %{\lfloor Nt \rfloor}C(z,s)\right\|\notag\\
%&=O({1}) \Op{N^{-1/2}}  \Op{N^{1/2}\log N}.\notag%\\
%&
=\Op{\log N}.
\end{aligned}
\end{equation}

Finally, using Theorems \ref{Th:eigen} and \ref{Th:menshov}, we obtain that
$$
\begin{aligned}
&\sup_{t\in [0,1]}\left|\sum_{\ell=1}^{\lfloor Nt \rfloor} \la X_{\ell}, \hat{v}_{j,N}- \hat{c}_{j,N}v_j\ra \la\eta_{\ell, 1}, \hat{w}_{i,N}-\hat{d}_{i,N}w_i\ra\right|\\
%&=\sup_{t\in [0,1]}\left|\sum_{\ell=1}^{\lfloor Nt \rfloor} \int_0^1 \int_0^1 X_{\ell}(z) %\left(\hat{v}_{j,N}(z)- \hat{c}_{j,N}v_j(z)\right) \sum_{r=q+1}^{\infty} \sum_{n=1}^{\infty} \psi_{r,n} %w_r(x)\right. \\
%&\qquad \times \left. \int_0^1 v_n(s) X_{\ell}(s)\left(\hat{w}_{i,N}(x)-\hat{d}_{i,N}w_i(x)\right)\dz \ds %\dx\right|\\
%&=\sup_{t\in [0,1]}\left|\Int \Int \Int  \left(\hat{v}_{j,N}(z)- \hat{c}_{j,N}v_j(z)\right) %\sum_{r=q+1}^{\infty} \sum_{n=1}^{\infty} \psi_{r,n} w_r(x)  v_n(s) %\left(\hat{w}_{i,N}(x)-\hat{d}_{i,N}w_i(x)\right)\right.  \\
%&\qquad \times \left.\sum_{\ell}^{\lfloor Nt \rfloor} X_{\ell}(z) X_{\ell}(s) \dz \ds \dx\right|\\
&\le\left|\left|\hat{v}_{j,N}(z)- \hat{c}_{j,N}v_j(z)\right|\right| \left|\left|\sum_{r=q+1}^{\infty} \sum_{n=1}^{\infty} \psi_{r,n} w_r(x)  v_n(s) \right|\right|\left|\left|\hat{w}_{i,N}(x)-\hat{d}_{i,N}w_i(x)\right|\right|  \\
&\qquad \times
 \sup_{t\in [0,1]}\left|\left|\sum_{\ell=1}^{\lfloor Nt \rfloor} X_{\ell}(z) X_{\ell}(s) \right|\right|\\
&=\Op{N^{-1/2}}  O(1) \Op{N^{-1/2}}  \Op{N}.\\
%&=\Op{1}.\\
\end{aligned}
$$

\end{proof}
%%%%%%%%%%%%%%%%%%%%%%%%%%%%%%%%%%%%%%%%%%%%%%%%%%%%%%%%%%%%%%%%%%%%%%%%%%%%%%%%%%%%%%%%%%%%%%%%%%%%%%%%%%%%%%%%%%%%%%%%%%%%%%%%
%%%%%%%%%%%%%%%%%%%%%%%%%%%%%%%%%%%%%%%%%%%%%%%%%%%%%%%%%%%%%%%%%%%%%%%%%%%%%%%%%%%%%%%%%%%%%%%%%%%%%%%%%%%%%%%%%%%%%%%%%%%%%%%%
\begin{Lemma}\label{l:eta_2} If Assumptions \ref{a:shifts}-\ref{a:PSImoments} hold, then we have
$$
\sup_{t\in[0,1]} \left| \sum_{\ell=1}^{\lfloor Nt \rfloor} \la X_{\ell}, \hat{v}_{j,N}\ra \la \eta_{\ell, 2}, \hat{w}_{i,N}\ra - \left( \hat{c}_{j,N} \hat{d}_{i,N} T_{\lfloor Nt \rfloor}^{(2)}(i,j) + {\lfloor Nt \rfloor}R_N^{(2)}(i,j)\right) \right|=\Op{\log N},
$$
where
$$
T_{\lfloor Nt \rfloor}^{(2)}(i,j)=\sum_{\ell=1}^{\lfloor Nt \rfloor} \int_0^1 \int_0^1 \left(X_{\ell}(s) X_{\ell}(z) - C(z,s)\right)\sum_{r=p+1}^{\infty} \psi_{ir} v_r(s) v_j(z) \dz \ds.
$$
\end{Lemma}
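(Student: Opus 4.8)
The plan is to follow the pattern of the proof of Lemma \ref{l:eta_1}, decomposing the summand so that the weakly dependent part $T_{\lfloor Nt \rfloor}^{(2)}$ and the deterministic drift $\lfloor Nt \rfloor R_N^{(2)}$ are split off and the remaining pieces are shown to be $\Op{\log N}$ uniformly in $t$. First I would expand both inner products, writing $\hat{v}_{j,N}=\hat{c}_{j,N}v_j+(\hat{v}_{j,N}-\hat{c}_{j,N}v_j)$ and $\hat{w}_{i,N}=\hat{d}_{i,N}w_i+(\hat{w}_{i,N}-\hat{d}_{i,N}w_i)$. The key algebraic simplification is that, by orthonormality of the $w$'s,
$$
\la \eta_{\ell,2}, w_i\ra=\sum_{r=p+1}^{\infty}\psi_{i,r}\la v_r, X_\ell\ra=\la X_\ell, u_i\ra ,
$$
with $u_i$ as defined above. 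Multiplying out $\la X_\ell,\hat{v}_{j,N}\ra\la\eta_{\ell,2},\hat{w}_{i,N}\ra$ then produces four terms: a main term $\hat{c}_{j,N}\hat{d}_{i,N}\la X_\ell, v_j\ra\la X_\ell, u_i\ra$, a term $\hat{d}_{i,N}\la X_\ell,\hat{v}_{j,N}-\hat{c}_{j,N}v_j\ra\la X_\ell, u_i\ra$ that will generate the drift, and two cross terms each carrying at least one eigenfunction estimation error.

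For the main term I would sum over $\ell$ and note that its mean vanishes: using the eigenrelation $\Int C(z,s) u_i(s)\ds=\sum_{r=p+1}^{\infty}\psi_{i,r}\lambda_r v_r(z)$ together with $\la v_j, v_r\ra=0$ for $j\le p<r$, one has $\E\la X_\ell, v_j\ra\la X_\ell, u_i\ra=0$, so the centered and uncentered sums coincide and the contribution is exactly $\hat{c}_{j,N}\hat{d}_{i,N}T_{\lfloor Nt \rfloor}^{(2)}(i,j)$. For the drift term I would write $X_\ell(z)X_\ell(s)=(X_\ell(z)X_\ell(s)-C(z,s))+C(z,s)$. By Cauchy--Schwarz the centered part is bounded by $\|\hat{v}_{j,N}-\hat{c}_{j,N}v_j\|\,\|u_i\|$ times $\sup_t\|\sum_{\ell\le\lfloor Nt \rfloor}(X_\ell(z)X_\ell(s)-C(z,s))\|$, which is $\Op{N^{-1/2}}\,O(1)\,\Op{N^{1/2}\log N}=\Op{\log N}$ by \eqref{e:eig-v} and \eqref{e:men-xx}. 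The $C(z,s)$ part equals $\lfloor Nt \rfloor\,\hat{d}_{i,N}\Int(\hat{v}_{j,N}(z)-\hat{c}_{j,N}v_j(z))\sum_{r=p+1}^{\infty}\psi_{i,r}\lambda_r v_r(z)\dz=\lfloor Nt \rfloor R_N^{(2)}(i,j)$, which is precisely the claimed drift.

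It remains to bound the two cross terms. For $\hat{c}_{j,N}\la X_\ell, v_j\ra\la\eta_{\ell,2},\hat{w}_{i,N}-\hat{d}_{i,N}w_i\ra$ I would again split via $X_\ell(z)X_\ell(s)=(X_\ell(z)X_\ell(s)-C(z,s))+C(z,s)$; the centered piece is $\Op{\log N}$ as above, while the $C$-piece carries the factor $\Int C(z,s) v_j(z)\dz=\lambda_j v_j(s)$ and therefore, after integrating against $v_r$ with $r>p$, vanishes by $\la v_j, v_r\ra=0$. The error-times-error term $\la X_\ell,\hat{v}_{j,N}-\hat{c}_{j,N}v_j\ra\la\eta_{\ell,2},\hat{w}_{i,N}-\hat{d}_{i,N}w_i\ra$ is handled directly by Cauchy--Schwarz: extracting $\|\hat{v}_{j,N}-\hat{c}_{j,N}v_j\|=\Op{N^{-1/2}}$, $\|\hat{w}_{i,N}-\hat{d}_{i,N}w_i\|=\Op{N^{-1/2}}$, $\sup_t\|\sum_{\ell\le\lfloor Nt \rfloor}X_\ell(z)X_\ell(s)\|=\Op{N}$, and the finite kernel norm $\|\sum_{i'=1}^q\sum_{r=p+1}^{\infty}\psi_{i',r}w_{i'}(x)v_r(s)\|=O(1)$ (square-summability of the $\psi_{i,j}$ from Assumption \ref{a:PSImoments}), yields $\Op{1}$. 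Summing the four contributions gives the lemma.

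The main obstacle, exactly as in Lemma \ref{l:eta_1}, is the bookkeeping: one must track which products, after centering against $C$, leave a deterministic $t$-linear drift. Only the second term survives, precisely because both the mean of the main term and the $C$-part of the first cross term are annihilated by the orthogonality $\la v_j, v_r\ra=0$ for $j\le p<r$; and every estimate has to be kept uniform in $t\in[0,1]$, which is guaranteed by the logarithmic bounds of Theorem \ref{Th:menshov}.
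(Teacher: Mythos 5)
Your proposal is correct and follows essentially the same route as the paper: the same four-term decomposition (both eigenfunctions true, $v$-error only, $w$-error only, both errors), the same identification of the drift $\lfloor Nt\rfloor R_N^{(2)}$ from the $C$-part of the $v$-error term, the same use of the eigenrelation and $\la v_j, v_r\ra=0$ for $j\le p<r$ to kill the remaining $C$-parts and the mean of the main term, and the same Cauchy--Schwarz bounds via Theorems \ref{Th:eigen} and \ref{Th:menshov}. No gaps.
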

\begin{proof} First we write
$$
\sum_{\ell=1}^{\lfloor Nt \rfloor} \la X_{\ell}, \hat{v}_{j,N} \ra \la\eta_{\ell, 2}, \hat{w}_{i,N}\ra = A_{\lfloor Nt \rfloor}^{(3)}+A_{\lfloor Nt \rfloor}^{(4)}+A_{\lfloor Nt \rfloor}^{(5)}+A_{\lfloor Nt \rfloor}^{(6)},
$$
where
$$
\begin{aligned}
A_{\lfloor Nt \rfloor}^{(3)}&=\hat{c}_{j,N}\hat{d}_{i,N} \sum_{\ell=1}^{\lfloor Nt \rfloor} \la X_{\ell}, v_j\ra \la \eta_{\ell, 2}, w_i\ra ,\\
A_{\lfloor Nt \rfloor}^{(4)}&=\hat{c}_{j,N}\sum_{\ell=1}^{\lfloor Nt \rfloor} \la X_{\ell}, v_j\ra \la \eta_{\ell, 2}, \hat{w}_{i,N}-\hat{d}_{i,N} w_i\ra ,\\
A_{\lfloor Nt \rfloor}^{(5)}&=\hat{d}_{i,N}\sum_{\ell=1}^{\lfloor Nt \rfloor} \la X_{\ell}, \hat{v}_{j,N}-\hat{c}_{j,N} v_j\ra \la \eta_{\ell, 2}, w_i\ra,  \\
A_{\lfloor Nt \rfloor}^{(6)}&=\sum_{\ell=1}^{\lfloor Nt \rfloor} \la X_{\ell}, \hat{v}_{j,N}-\hat{c}_{j,N} v_j\ra \la \eta_{\ell, 2}, \hat{w}_{i,N}-\hat{d}_{i,N} w_i\ra . \\
\end{aligned}
$$
The orthonormality of $\{w_i,\ 1\le i < \infty\}$ shows that  for all $1\leq i \leq q$
$$
\begin{aligned}
\la\eta_{\ell, 2}, w_i\ra &=\int_0^1 \sum_{r=1}^q \sum_{n=p+1}^{\infty} \psi_{r,n} w_r(x)\left\{ \Int v_n(s) X_{\ell}(s) \ds\right\} w_i(x) \dx\\
&=\sum_{n=p+1}^{\infty} \psi_{i,n} \Int v_n(s) X_{\ell}(s) \ds.
\end{aligned}
$$
Therefore, using again that the $v_j$'s are orthonormal eigenfunctions of $C$ we have
$$
\begin{aligned}
A_{\lfloor Nt \rfloor}^{(3)}&=%\hat{c}_{j,N}\hat{d}_{i,N} \sum_{\ell=1}^{\lfloor Nt \rfloor} \int_0^1 %X_{\ell}(z) v_j(z)\dz \sum_{n=p+1}^{\infty} \psi_{i,n} \int_0^1 v_n(s) X_{\ell}(s)\ds\\
%&=\hat{c}_{j,N}\hat{d}_{i,N} \sum_{n=p+1}^{\infty} \psi_{i,n} \int_0^1 \int_0^1  v_j(z)  v_n(s) %\sum_{\ell=1}^{\lfloor Nt \rfloor} X_{\ell}(z) X_{\ell}(s)\ds \dz\\
%&=\hat{c}_{j,N}\hat{d}_{i,N} \sum_{n=p+1}^{\infty} \psi_{i,n} \int_0^1 \int_0^1  v_j(z)  v_n(s) %\left(\sum_{\ell=1}^{\lfloor Nt \rfloor} X_{\ell}(z) X_{\ell}(s) -{\lfloor Nt \rfloor}C(s,z)\right)\ds %\dz\\
%&\quad\quad+\hat{c}_{j,N}\hat{d}_{i,N} {\lfloor Nt \rfloor}\sum_{n=p+1}^{\infty} \psi_{i,n} \int_0^1 %\int_0^1  v_i(z)  v_n(s) C(s,z)\ds \dz\\
%&
=\hat{c}_{j,N}\hat{d}_{i,N} \sum_{n=p+1}^{\infty} \psi_{i,n} \int_0^1 \int_0^1  v_j(z)  v_n(s) \left(\sum_{\ell=1}^{\lfloor Nt \rfloor} X_{\ell}(z) X_{\ell}(s) -{\lfloor Nt \rfloor}C(s,z)\right)\ds \dz\\
&=\hat{c}_{j,N}\hat{d}_{i,N}T_{\lfloor Nt \rfloor}^{(2)}(i,j).
\end{aligned}
$$
We decompose $A_{\lfloor Nt \rfloor}^{(4)}$ as
$$
\begin{aligned}
A_{\lfloor Nt \rfloor}^{(4)}&=\hat{c}_{j,N}\sum_{\ell=1}^{\lfloor Nt \rfloor} \int_0^1 \int_0^1 X_{\ell}(z)v_j(z)\left(\hat{w}_{i,N}(x)-\hat{d}_{i,N}w_i(x)\right)\sum_{r=1}^q \sum_{n=p+1}^\infty \psi_{r,n} w_r(x)  \\
&\qquad \times \int_0^1 v_n(s)X_{\ell}(s)\ds \dz\dx\\
&=\hat{c}_{j,N} \int_0^1 \int_0^1 \int_0^1 v_j(z)\left(\hat{w}_{i,N}(x)-\hat{d}_{i,N}w_i(x)\right)\sum_{r=1}^q \sum_{n=p+1}^\infty \psi_{r,n} w_r(x)  v_n(s) \\
&\qquad \times \sum_{\ell=1}^{\lfloor Nt \rfloor} X_{\ell}(s)X_{\ell}(z) \ds \dz\dx\\
&=A_{{\lfloor Nt \rfloor},1}^{(4)}+A_{{\lfloor Nt \rfloor},2}^{(4)},
\end{aligned}
$$
where
$$
\begin{aligned}
A_{{\lfloor Nt \rfloor},1}^{(4)}&=\hat{c}_{j,N} \int_0^1 \int_0^1 \int_0^1 v_j(z)\left(\hat{w}_{i,N}(x)-\hat{d}_{i,N}w_i(x)\right)\sum_{r=1}^q \sum_{n=p+1}^\infty \psi_{r,n} w_r(x)  v_n(s) \\
&\qquad \times \left(\sum_{\ell=1}^{\lfloor Nt \rfloor} X_{\ell}(s)X_{\ell}(z) - {\lfloor Nt \rfloor}C(s,z)\right) \ds \dz\dx\\
\end{aligned}
$$
and
$$
\begin{aligned}
A_{{\lfloor Nt \rfloor},2}^{(4)}&=\hat{c}_{j,N} {\lfloor Nt \rfloor} \int_0^1 \int_0^1 \int_0^1 v_j(z)\left(\hat{w}_{i,N}(x)-\hat{d}_{i,N}w_i(x)\right)  \\
&\qquad \times \sum_{r=1}^q \sum_{n=p+1}^\infty \psi_{r,n} w_r(x)  v_n(s)C(s,z) \ds \dz\dx\\
&=\hat{c}_{j,N} {\lfloor Nt \rfloor} \Int \Int  \lambda_j v_j(s)\left(\hat{w}_{i,N}(x)-\hat{d}_{i,N}w_i(x)\right)\sum_{r=1}^q \sum_{n=p+1}^\infty \psi_{r,n} w_r(x)  v_n(s)\ds\dx \\
&=0,
\end{aligned}
$$
using again that the $v_j$'s are eigenfunctions of $C$. Therefore we obtain
$$
\begin{aligned}
\sup_{t\in [0,1]} \left|A_{\lfloor Nt \rfloor}^{(4)}\right|&=\sup_{t\in [0,1]} \left|A_{{\lfloor Nt \rfloor},1}^{(4)}\right|\\
&\le \left\|\hat{w}_{i,N}(x)-\hat{d}_{i,N}w_i(x)\right\| \left\| v_j(z)\sum_{r=1}^q \sum_{n=p+1}^\infty \psi_{r,n} w_r(x)  v_n(s)\right\|  \\
&\qquad \times \sup_{t\in [0,1]} \left\|\sum_{\ell=1}^{\lfloor Nt \rfloor} X_{\ell}(s)X_{\ell}(z) - {\lfloor Nt \rfloor}C(s,z) \right\|\\
&=\Op{N^{-1/2}} \Op{1} \Op{N^{1/2}\log N}.
\end{aligned}
$$
Similar arguments give
$$
\begin{aligned}
A_{\lfloor Nt \rfloor}^{(5)}&=\hat{d}_{i,N} \sum_{\ell=1}^{\lfloor Nt \rfloor} \int_0^1 \int_0^1 X_{\ell}(z)\left(\hat{v}_{j,N}(z) - \hat{c}_{j,N}v_j(z)\right) w_i(x) \sum_{r=1}^q \sum_{n=p+1}^{\infty} \psi_{r,n} w_r(x) \ \\
&\qquad \times \int_0^1 v_n(s)X_{\ell}(s)\ds \dz \dx\\
%&=\hat{d}_{i,N} \int_0^1 \int_0^1 \int_0^1 \left(\hat{v}_{j,N}(z) - \hat{c}_{j,N}v_j(z)\right) w_i(x) %\sum_{r=1}^q \sum_{n=p+1}^{\infty} \psi_{r,n} w_r(x)  v_n(s)\\
%&\qquad \times \sum_{\ell=1}^{\lfloor Nt \rfloor}  X_{\ell}(s)X_{\ell}(z)\ds \dz \dx\\
&=A_{{\lfloor Nt \rfloor},1}^{(5)}+A_{{\lfloor Nt \rfloor},2}^{(5)},\\
\end{aligned}
$$
where
$$
\begin{aligned}
A_{{\lfloor Nt \rfloor},1}^{(5)}%&=\hat{d}_{i,N} \int_0^1 \int_0^1 \int_0^1 \left(\hat{v}_{j,N}(z) - %\hat{c}_{j,N}v_j(z)\right) w_i(x) \sum_{r=1}^q \sum_{n=p+1}^{\infty} \psi_{r,n} w_r(x)  v_n(s) \\
%&\qquad \times \left(\sum_{\ell=1}^{\lfloor Nt \rfloor}  X_{\ell}(s)X_{\ell}(z) - {\lfloor Nt %\rfloor}C(s,z)\right)\ds \dz \dx\\
&=\hat{d}_{i,N} \int_0^1 \int_0^1  \left(\hat{v}_{j,N}(z) - \hat{c}_{j,N}v_j(z)\right)  \sum_{n=p+1}^{\infty} \psi_{i,n}   v_n(s)\\
&\qquad \times \left(\sum_{\ell=1}^{\lfloor Nt \rfloor}  X_{\ell}(s)X_{\ell}(z) - {\lfloor Nt \rfloor}C(s,z)\right)\ds \dz\\
\end{aligned}
$$
and
$$
\begin{aligned}
A_{{\lfloor Nt \rfloor},2}^{(5)}&=\hat{d}_{i,N} {\lfloor Nt \rfloor} \int_0^1 \int_0^1 \int_0^1 \left(\hat{v}_{j,N}(z) - \hat{c}_{j,N}v_j(z)\right) w_i(x) \sum_{r=1}^q \sum_{n=p+1}^{\infty} \psi_{r,n} w_r(x)  v_n(s)C(s,z)\ds \dz \dx\\
%&=\hat{d}_{i,N} {\lfloor Nt \rfloor} \int_0^1 \int_0^1  \left(\hat{v}_{j,N}(z) - %\hat{c}_{j,N}v_j(z)\right) w_i(x) \sum_{r=1}^q \sum_{n=p+1}^{\infty} \psi_{r,n} w_r(x)  \lambda_n %v_n(z)\dz \dx\\
%&=\hat{d}_{i,N} {\lfloor Nt \rfloor} \int_0^1 \left(\hat{v}_{j,N}(z) - \hat{c}_{j,N}v_j(z)\right) %\sum_{n=p+1}^{\infty} \psi_{i,n}  \lambda_n v_n(z)\dz\\
&={\lfloor Nt \rfloor} R_N^{(2)}(i,j).
\end{aligned}
$$
Repeating our previous arguments we get that
$$
\begin{aligned}
\sup_{t\in [0,1]} \left|A_{{\lfloor Nt \rfloor},1}^{(5)}\right| &\le \left\|\hat{v}_{j,N}(z) - \hat{c}_{j,N}v_j(z)\right\| \left\| \sum_{n=p+1}^{\infty} \psi_{i,n}   v_n(s)\right\|
 \sup_{t\in [0,1]} \left\|\sum_{\ell=1}^{\lfloor Nt \rfloor}  X_{\ell}(s)X_{\ell}(z) - {\lfloor Nt \rfloor}C(s,z)\right\|\\
&=\Op{N^{-1/2}}  O(1)  \Op{N^{1/2}\log N}.\\
%&=\Op{1}\\
\end{aligned}
$$
Similarly, using the Cauchy-Schwarz inequality with  (\ref{e:sum-xx}) and Theorem \ref{Th:eigen},  we conclude that
$$
\begin{aligned}
\sup_{t\in [0,1]}&|A_{\lfloor Nt \rfloor}^{(6)}|%\\
%&=\sup_{t\in [0,1]}\left| \sum_{\ell=1}^{\lfloor Nt \rfloor} X_{\ell}(z) \int_0^1
%\int_0^1 \left(\hat{v}_{j,N}(z) - %\hat{c}_{j,N}v_j(z)\right)\left(\hat{w}_{i,N}(x)-\hat{d}_{i,N}w_i(x)\right) \sum_{r=1}^q %\sum_{n=p+1}^{\infty} \psi_{r,n} w_r(x) \right.\\
%&\qquad \left. \times \Int v_n(s) X_{\ell}(s) \ds \dx \dz\right|\\
%&= \sup_{t\in [0,1]}\left|\Int \Int \Int \left(\hat{v}_{j,N}(z) - %\hat{c}_{j,N}v_j(z)\right)\left(\hat{w}_{i,N}(x)-\hat{d}_{i,N}w_i(x)\right) \sum_{r=1}^q %\sum_{n=p+1}^{\infty} \psi_{r,n} w_r(x) v_n(s)\right. \\
%&\qquad \left. \times \sum_{\ell=1}^{\lfloor Nt \rfloor} X_{\ell}(z)  X_{\ell}(s) \ds \dx \dz  \right|\\
%&\leq \|\hat{w}_{i,N}(x)-\hat{d}_{i,N}w_i(x)\|\sup_{t\in [0,1]}\left\|
%\sum_{\ell=1}^{\lfloor Nt \rfloor} X_{\ell}(z)  X_{\ell}(s)
%\right\|\\
%&\qquad
%\times \|\hat{v}_{j,N}(z) - \hat{c}_{j,N}v_j(z)\|\left\| \sum_{r=1}^q \sum_{n=p+1}^{\infty} \psi_{r,n} %w_r(x) v_n(s)
%\right\|\\
%&=\Op{N^{-1/2}}\Op{N}\Op{N^{-1/2}}O(1),
O_P(1),
\end{aligned}
$$
completing the proof of the lemma.
\end{proof}
%%%%%%%%%%%%%%%%%%%%%%%%%%%%%%%%%%%%%%%%%%%%%%%%%%%%%%%%%%%%%%%%%%%%%%%%%%%%%%%%%%%%%%%%%%%%%%%%%%%%%%%%%%%%%%%%%%%%%%%%%%%%%%%%
%%%%%%%%%%%%%%%%%%%%%%%%%%%%%%%%%%%%%%%%%%%%%%%%%%%%%%%%%%%%%%%%%%%%%%%%%%%%%%%%%%%%%%%%%%%%%%%%%%%%%%%%%%%%%%%%%%%%%%%%%%%%%%%%
\begin{Lemma}\label{l:eta_3} If Assumptions \ref{a:shifts}-\ref{a:PSImoments} hold, then we have
$$
\sup_{t\in[0,1]}  \left| \sum_{\ell=1}^{\lfloor Nt \rfloor}  \la X_{\ell}, \hat{v}_{j,N}\ra \la\eta_{\ell,3}, \hat{w}_{i,N}\ra - {\lfloor Nt \rfloor} R_N^{(3)}(i,j) \right|=\Op{\log N}.
$$
\end{Lemma}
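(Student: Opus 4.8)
The plan is to expand the projection $\la\eta_{\ell,3},\hat{w}_{i,N}\ra$ explicitly, isolate the single term that produces the drift $R_N^{(3)}(i,j)$, and show everything else is negligible. Writing the summation indices in the definition of $\eta_{\ell,3}$ as $r$ and $m$ and using $\hat{d}_{r,N}^2=1$, we have
$$
\la\eta_{\ell,3},\hat{w}_{i,N}\ra=\sum_{r=1}^q\sum_{m=1}^p\psi_{r,m}\hat{c}_{m,N}\la w_r,\hat{w}_{i,N}\ra\la\hat{c}_{m,N}v_m-\hat{v}_{m,N},X_\ell\ra.
$$
First I would replace $\la w_r,\hat{w}_{i,N}\ra$ by $\hat{d}_{i,N}\la w_r,w_i\ra=\hat{d}_{i,N}1\{r=i\}$, writing $\la w_r,\hat{w}_{i,N}\ra=\hat{d}_{i,N}1\{r=i\}+\la w_r,\hat{w}_{i,N}-\hat{d}_{i,N}w_i\ra$. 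Multiplying by $\la X_\ell,\hat{v}_{j,N}\ra$, summing over $\ell$, and taking $\sup_t$, the contribution of the second piece is bounded (by the triangle inequality over $\ell\le N$ together with Cauchy--Schwarz) by a constant times $\|\hat{w}_{i,N}-\hat{d}_{i,N}w_i\|\max_m\|\hat{c}_{m,N}v_m-\hat{v}_{m,N}\|\sum_{\ell=1}^N\|X_\ell\|^2$, which is $\Op{N^{-1/2}}\Op{N^{-1/2}}\Op{N}=\Op{1}$ by \eqref{e:eig-w}, \eqref{e:eig-v} and the ergodic bound $\sum_{\ell=1}^N\|X_\ell\|^2=\Op{N}$ (cf.\ Lemma \ref{l:converge}).

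It then remains to analyze the principal term $\hat{d}_{i,N}\sum_{m=1}^p\psi_{i,m}\hat{c}_{m,N}\sum_{\ell=1}^{\lfloor Nt\rfloor}\la X_\ell,\hat{v}_{j,N}\ra\la\hat{c}_{m,N}v_m-\hat{v}_{m,N},X_\ell\ra$. For each $m$ I would write the inner sum as a double integral against $\sum_{\ell=1}^{\lfloor Nt\rfloor}X_\ell(z)X_\ell(s)$ and split the latter as $\lfloor Nt\rfloor C(z,s)$ plus the centered sum $\sum_{\ell=1}^{\lfloor Nt\rfloor}(X_\ell(z)X_\ell(s)-C(z,s))$. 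The centered part is the crux: by Cauchy--Schwarz over the unit square it is bounded by $\|\hat{v}_{j,N}\|\,\|\hat{c}_{m,N}v_m-\hat{v}_{m,N}\|$ times $\sup_t\|\sum_{\ell=1}^{\lfloor Nt\rfloor}(X_\ell(z)X_\ell(s)-C(z,s))\|$, which equals $\Op{1}\,\Op{N^{-1/2}}\,\Op{N^{1/2}\log N}=\Op{\log N}$ on account of $\|\hat{v}_{j,N}\|=1$, \eqref{e:eig-v} and the uniform bound \eqref{e:men-xx} of Theorem \ref{Th:menshov}. This is the term that forces the $\log N$ rate in the statement.

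Finally, in the deterministic part $\lfloor Nt\rfloor\int_0^1\int_0^1\hat{v}_{j,N}(z)(\hat{c}_{m,N}v_m(s)-\hat{v}_{m,N}(s))C(z,s)\dz\ds$ I would replace $\hat{v}_{j,N}(z)$ by $\hat{c}_{j,N}v_j(z)$; the replacement error is $\lfloor Nt\rfloor\|\hat{v}_{j,N}-\hat{c}_{j,N}v_j\|\,\|\hat{c}_{m,N}v_m-\hat{v}_{m,N}\|\,\|C\|=\Op{N}\Op{N^{-1}}=\Op{1}$ by \eqref{e:eig-v}. In the surviving term I use the eigenfunction identity $\int_0^1 C(z,s)v_j(z)\dz=\lambda_j v_j(s)$ (symmetry of $C$), which collapses the $z$-integral and yields exactly
$$
\lfloor Nt\rfloor\,\hat{c}_{j,N}\hat{d}_{i,N}\lambda_j\sum_{m=1}^p\psi_{i,m}\hat{c}_{m,N}\Int(\hat{c}_{m,N}v_m(s)-\hat{v}_{m,N}(s))v_j(s)\ds=\lfloor Nt\rfloor R_N^{(3)}(i,j).
$$
Collecting the estimates, the difference in the statement is $\Op{\log N}+\Op{1}=\Op{\log N}$. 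The main obstacle is the bookkeeping: one must track carefully which replacement errors are genuinely $\Op{1}$ and confirm that the only $\log N$ contribution arises from pairing the $N^{-1/2}$ eigenfunction rate with the $N^{1/2}\log N$ uniform fluctuation bound, while verifying that the eigenfunction identity for $C$ reproduces $R_N^{(3)}(i,j)$ with all random signs $\hat{c}_{\cdot,N},\hat{d}_{\cdot,N}$ in their correct positions.
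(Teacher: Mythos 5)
Your proof is correct and follows essentially the same route as the paper's: both peel off the eigenfunction-estimation errors (each such term being $\Op{1}$ because $\eta_{\ell,3}$ already carries a factor $\hat{c}_{n,N}v_n-\hat{v}_{n,N}$ of size $\Op{N^{-1/2}}$), extract the drift ${\lfloor Nt \rfloor} R_N^{(3)}(i,j)$ from the mean $\lfloor Nt\rfloor C(z,s)$ via the eigenfunction identity, and obtain the $\log N$ rate by pairing \eqref{e:eig-v} with the maximal bound \eqref{e:men-xx} of Theorem \ref{Th:menshov}. The only difference is bookkeeping: the paper first does a three-way split in both $\hat{v}_{j,N}$ and $\hat{w}_{i,N}$ (its terms $A^{(7)},A^{(8)},A^{(9)}$), whereas you split off the $\hat{w}_{i,N}$ error first and defer the replacement $\hat{v}_{j,N}\to\hat{c}_{j,N}v_j$ to the deterministic part, using $\|\hat{v}_{j,N}\|=1$ in the fluctuation bound -- an equally valid reorganization.
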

\begin{proof} We write
$$
\sum_{\ell=1}^{\lfloor Nt \rfloor}  \la X_{\ell}, \hat{v}_{j,N}\ra \la \eta_{\ell,3},\ \hat{w}_{i,N}\ra  = A_{\lfloor Nt \rfloor}^{(7)}+ A_{\lfloor Nt \rfloor}^{(8)}+A_{\lfloor Nt \rfloor}^{(9)},
$$
where
$$
\begin{aligned}
A_{\lfloor Nt \rfloor}^{(7)}&= \sum_{\ell=1}^{\lfloor Nt \rfloor}  \la X_{\ell}, \hat{v}_{j,N} - \hat{c}_{j,N} v_j\ra \la \eta_{\ell,3}, \hat{w}_{i,N}\ra,\\
A_{\lfloor Nt \rfloor}^{(8)}&= \hat{c}_{j,N}\sum_{\ell=1}^{\lfloor Nt \rfloor}  \la X_{\ell}, {v}_{j}\ra \la \eta_{\ell,3}, \hat{w}_{i,N}- \hat{d}_{i,N} w_i\ra, \\
A_{\lfloor Nt \rfloor}^{(9)}&= \hat{c}_{j,N} \hat{d}_{i,N} \sum_{\ell=1}^{\lfloor Nt \rfloor}  \la X_{\ell},\ v_j\ra \la \eta_{\ell,3}, w_i\ra .\\
\end{aligned}
$$
Theorems \ref{Th:sums} and \ref{Th:eigen} imply that
$$
\begin{aligned}
\sup_{t\in [0,1]} \left|A_{\lfloor Nt \rfloor}^{(7)}\right|
&=\sup_{t\in [0,1]}\left| \Int \Int  \sum_{\ell=1}^{\lfloor Nt \rfloor} X_{\ell}(z) \left(\hat{v}_{j,N}(z) - \hat{c}_{j,N} v_j(z)\right) \hat{w}_{i,N}(x) \sum_{r=1}^q \sum_{n=1}^p \psi_{r,n}\hat{c}_{n,N} w_r(x) \right. \\
&\qquad \times \left.\Int \left( \hat{c}_{n,N} v_n(s) - \hat{v}_{n,N}(s)\right) X_{\ell}(s) \ds \dz \dx \right|\\
&\le \sum_{r=1}^q \sum_{n=1}^p \sup_{t\in [0,1]} \left\|\sum_{\ell=1}^{\lfloor Nt \rfloor} X_{\ell}(s) X_{\ell}(z)\right\| \left\|\hat{v}_{j,N}(z) - \hat{c}_{j,N} v_j(z)\right\|   \\
 &\qquad \times \left\|\hat{w}_{i,N}(x) \psi_{r,n} \hat{c}_{n,N} w_r(x) \right\|    \left\|\hat{c}_{n,N} v_n(s) - \hat{v}_{n,N}(s)\right\|\\
&=\Op{N} \Op{N^{-1/2}} O({1}) \Op{N^{-1/2}},
\end{aligned}
$$
and similarly
$$
\begin{aligned}
\sup_{t\in [0,1]}\left|A_{\lfloor Nt \rfloor}^{(8)}\right|
=\Op{1}.
\end{aligned}
$$
Next we observe that
$$
\begin{aligned}
\sup_{t\in [0,1]}\left|A_{\lfloor Nt \rfloor}^{(9)}\right|
%&= \hat{c}_{j,N} \hat{d}_{i,N} \sum_{\ell=1}^{\lfloor Nt \rfloor} \Int \Int X_{\ell}(z)v_j(z)w_i(x) %\sum_{r=1}^q \sum_{n=1}^p \psi_{r,n} \hat{c}_{n,N} w_r(x) \\
%&\qquad \times \Int \left(\hat{c}_{n,N} v_n(s) - \hat{v}_{n,N}(s)\right) X_{\ell}(s) \ds \dz\dx\\
%&=\hat{c}_{j,N} \hat{d}_{i,N}  \Int \Int \Int \sum_{\ell=1}^{\lfloor Nt \rfloor} X_{\ell}(z)X_{\ell}(s)  %v_j(z)w_i(x) \sum_{r=1}^q \sum_{n=1}^p \psi_{r,n} \hat{c}_{n,N} w_r(x)  \\
%&\qquad \times \left(\hat{c}_{n,N} v_n(s) - \hat{v}_{n,N}(s)\right) \ds \dz\dx\\
=A_{{\lfloor Nt \rfloor},1}^{(9)}+A_{{\lfloor Nt \rfloor},2}^{(9)},
\end{aligned}
$$
where
$$
\begin{aligned}
A_{{\lfloor Nt \rfloor},1}^{(9)}%&=\hat{c}_{j,N} \hat{d}_{i,N}  {\lfloor Nt \rfloor} \Int \Int \Int C(z,s) %v_j(z)w_i(x) \sum_{r=1}^q \sum_{n=1}^p \psi_{r,n} \hat{c}_{n,N} w_r(x)  \\
%&\qquad \times \left(\hat{c}_{n,N} v_n(s) - \hat{v}_{n,N}(s)\right) \ds \dz\dx\\
&=\hat{c}_{j,N} \hat{d}_{i,N}  {\lfloor Nt \rfloor} \Int \Int \lambda_j v_j(s)w_i(x) \sum_{r=1}^q \sum_{n=1}^p \psi_{r,n} \hat{c}_{n,N} w_r(x) \left(\hat{c}_{n,N} v_n(s) - \hat{v}_{n,N}(s)\right) \ds \dx\\
%&=\hat{c}_{j,N} \hat{d}_{i,N}  {\lfloor Nt \rfloor} \Int \lambda_j v_j(s)\sum_{n=1}^p \psi_{i,n} %\hat{c}_{n,N}  \left(\hat{c}_{n,N} v_n(s) - \hat{v}_{n,N}(s)\right) \ds \\
&={\lfloor Nt \rfloor} R_N^{(3)}(i,j)
\end{aligned}
$$
and
$$
\begin{aligned}
A_{{\lfloor Nt \rfloor},2}^{(9)} &= \hat{c}_{j,N} \hat{d}_{i,N}  \Int \Int \Int \left(\sum_{\ell=1}^{\lfloor Nt \rfloor} X_{\ell}(z)X_{\ell}(s) -{\lfloor Nt \rfloor}C(z,s)\right) v_j(z)w_i(x)\\
&\qquad \times \sum_{r=1}^q \sum_{n=1}^p \psi_{r,n} \hat{c}_{n,N} w_r(x) \left(\hat{c}_{n,N} v_n(s) - \hat{v}_{n,N}(s)\right) \ds \dz\dx.
\end{aligned}
$$
Using  Theorems \ref{Th:eigen} and \ref{Th:menshov} again, we obtain that
$$
\sup_{t\in [0,1]}|A_{{\lfloor Nt \rfloor},2}^{(9)}|=\Op{\log N}.
$$
This  completes the proof.
\end{proof}
%%%%%%%%%%%%%%%%%%%%%%%%%%%%%%%%%%%%%%%%%%%%%%%%%%%%%%%%%%%%%%%%%%%%%%%%%%%%%%%%%%%%%%%%%%%%%%%%%%%%%%%%%%%%%%%%%%%%%%%%%%%%%%%%
%%%%%%%%%%%%%%%%%%%%%%%%%%%%%%%%%%%%%%%%%%%%%%%%%%%%%%%%%%%%%%%%%%%%%%%%%%%%%%%%%%%%%%%%%%%%%%%%%%%%%%%%%%%%%%%%%%%%%%%%%%%%%%%%
\begin{Lemma}\label{l:eta_4} If Assumptions \ref{a:shifts}-\ref{a:PSImoments} hold, then we have
$$
\sup_{t\in[0,1]} \left|\sum_{\ell=1}^{\lfloor Nt \rfloor} \la X_{\ell}, \hat{v}_{j,N}\ra \la \eta_{\ell, 4}, \hat{w}_{i,N}\ra - {\lfloor Nt \rfloor} R_N^{(4)}(i,j)\right|=\Op{\log N}.
$$
\end{Lemma}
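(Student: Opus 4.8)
The plan is to mirror the decomposition used for $\eta_{\ell,3}$ in Lemma \ref{l:eta_3}, noting that $\eta_{\ell,4}$ is the ``dual'' object in which the $w$-eigenfunctions enter through the difference $\hat{d}_{r,N}w_r-\hat{w}_{r,N}$ while the $v$-eigenfunctions enter cleanly as $\hat{c}_{n,N}v_n$ (here I rename the summation indices in $\eta_{\ell,4}$ to $r,n$ to avoid clashing with the fixed $i,j$). First I would peel off the two outer estimated eigenfunctions by writing $\hat{v}_{j,N}=(\hat{v}_{j,N}-\hat{c}_{j,N}v_j)+\hat{c}_{j,N}v_j$ and $\hat{w}_{i,N}=(\hat{w}_{i,N}-\hat{d}_{i,N}w_i)+\hat{d}_{i,N}w_i$, obtaining
$$
\sum_{\ell=1}^{\lfloor Nt\rfloor}\la X_\ell,\hat{v}_{j,N}\ra\la\eta_{\ell,4},\hat{w}_{i,N}\ra=A_{\lfloor Nt\rfloor}^{(10)}+A_{\lfloor Nt\rfloor}^{(11)}+A_{\lfloor Nt\rfloor}^{(12)},
$$
where $A^{(10)}$ carries the factor $\la X_\ell,\hat{v}_{j,N}-\hat{c}_{j,N}v_j\ra$, $A^{(11)}=\hat{c}_{j,N}\sum_\ell\la X_\ell,v_j\ra\la\eta_{\ell,4},\hat{w}_{i,N}-\hat{d}_{i,N}w_i\ra$, and $A^{(12)}=\hat{c}_{j,N}\hat{d}_{i,N}\sum_\ell\la X_\ell,v_j\ra\la\eta_{\ell,4},w_i\ra$ is the clean term.

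I would then show the two remainder terms are $\Op{1}$, exactly as $A^{(7)}$ and $A^{(8)}$ were handled in Lemma \ref{l:eta_3}. Each collects two eigenfunction-difference factors, each $\Op{N^{-1/2}}$ by (\ref{e:eig-w}) and (\ref{e:eig-v}): for $A^{(10)}$ these are $\|\hat{v}_{j,N}-\hat{c}_{j,N}v_j\|$ and $\la\hat{d}_{r,N}w_r-\hat{w}_{r,N},\hat{w}_{i,N}\ra$, while for $A^{(11)}$ both arise from $\la\hat{d}_{r,N}w_r-\hat{w}_{r,N},\hat{w}_{i,N}-\hat{d}_{i,N}w_i\ra$. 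Applying Cauchy--Schwarz against the uncentered quadratic sum $\sup_t\|\sum_{\ell=1}^{\lfloor Nt\rfloor}X_\ell(z)X_\ell(s)\|=\Op{N}$ (which follows from the ergodic bound together with (\ref{e:men-xx})), and using that the finitely many coefficients $\psi_{r,n}$ and eigenfunctions $v_j,v_n,w_i$ give only $O(1)$ contributions, yields $\Op{N^{-1/2}}\Op{N^{-1/2}}\Op{N}=\Op{1}$ in each case.

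The heart of the argument is $A^{(12)}$. Absorbing the two sign factors attached to $v_n$ inside $\eta_{\ell,4}$ via $\hat{c}_{n,N}^2=1$, I would write
$$
A_{\lfloor Nt\rfloor}^{(12)}=\hat{c}_{j,N}\hat{d}_{i,N}\sum_{r=1}^q\sum_{n=1}^p\hat{d}_{r,N}\psi_{r,n}\la\hat{d}_{r,N}w_r-\hat{w}_{r,N},w_i\ra\int_0^1\int_0^1\Big(\sum_{\ell=1}^{\lfloor Nt\rfloor}X_\ell(z)X_\ell(s)\Big)v_j(z)v_n(s)\,\dz\,\ds,
$$
and then replace $\sum_\ell X_\ell(z)X_\ell(s)$ by its centered version plus $\lfloor Nt\rfloor C(z,s)$. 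Since the $v_j$ are orthonormal eigenfunctions of $C$, the $C$-part gives $\int_0^1\int_0^1 C(z,s)v_j(z)v_n(s)\,\dz\,\ds=\lambda_j\,1\{n=j\}$, collapsing the $n$-sum onto $n=j$ and producing exactly $\lfloor Nt\rfloor R_N^{(4)}(i,j)$. For the centered part I would bound the inner-product factor by $\Op{N^{-1/2}}$ via (\ref{e:eig-w}) and combine it, through Cauchy--Schwarz, with the uniform bound $\sup_t\|\sum_{\ell=1}^{\lfloor Nt\rfloor}(X_\ell(z)X_\ell(s)-C(z,s))\|=\Op{N^{1/2}\log N}$ from Theorem \ref{Th:menshov}, giving $\Op{N^{-1/2}}\Op{N^{1/2}\log N}=\Op{\log N}$. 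Collecting the three estimates and taking $\sup_{t\in[0,1]}$ finishes the proof. The main obstacle is precisely this clean-term bookkeeping: one must track which $\hat{c}$'s and $\hat{d}$'s pair off to square to $1$, and verify that the surviving difference $\hat{d}_{r,N}w_r-\hat{w}_{r,N}$ (which does \emph{not} vanish) together with the eigen-normalization reproduces $R_N^{(4)}(i,j)$ verbatim. Unlike the situation in Lemma \ref{l:eta_2}, the drift itself is only of order $\Op{N^{-1/2}}$, so $\lfloor Nt\rfloor R_N^{(4)}=\Op{N^{1/2}}$, which is exactly why it must be isolated before the $\Op{\log N}$ remainder is discarded.
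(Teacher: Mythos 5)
Your proposal is correct and follows essentially the same route as the paper: peel off the eigenfunction differences $\hat{v}_{j,N}-\hat{c}_{j,N}v_j$ and $\hat{w}_{i,N}-\hat{d}_{i,N}w_i$, kill the remainder terms at rate $\Op{1}$ via Cauchy--Schwarz with Theorems \ref{Th:eigen} and \ref{Th:menshov}, then center the clean term by $\lfloor Nt\rfloor C(z,s)$ so that the eigenfunction identity $\int_0^1\int_0^1 C(z,s)v_j(z)v_n(s)\,\dz\,\ds=\lambda_j 1\{n=j\}$ collapses the sum onto $n=j$ and yields exactly $\lfloor Nt\rfloor R_N^{(4)}(i,j)$, with the centered part $\Op{\log N}$. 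The only (immaterial) difference is that you use a three-term decomposition, merging into a single term what the paper splits into its difference-times-difference and difference-times-clean pieces.
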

\begin{proof} Following the proofs of the previous lemmas we write

$$
\sum_{\ell=1}^{\lfloor Nt \rfloor} \la X_{\ell}, \hat{v}_{j,N}\ra \la\eta_{\ell, 4}, \hat{w}_{i,N}\ra = A_{\lfloor Nt \rfloor}^{(10)} + A_{\lfloor Nt \rfloor}^{(11)} +A_{\lfloor Nt \rfloor}^{(12)} +A_{\lfloor Nt \rfloor}^{(13)},
$$
where
$$
\begin{aligned}
A_{\lfloor Nt \rfloor}^{(10)}&=\sum_{\ell=1}^{\lfloor Nt \rfloor} \la X_{\ell}, \hat{v}_{j,N}- \hat{c}_{j,N} v_j\ra \la \eta_{\ell, 4}, \hat{w}_{i,N}-\hat{d}_{i,N} w_i\ra ,\\
A_{\lfloor Nt \rfloor}^{(11)}&=\hat{c}_{j,N} \hat{d}_{i,N} \sum_{\ell=1}^{\lfloor Nt \rfloor} \la X_{\ell},\ v_j\ra \la \eta_{\ell, 4}, w_i\ra,\\
A_{\lfloor Nt \rfloor}^{(12)}&= \hat{d}_{i,N} \sum_{\ell=1}^{\lfloor Nt \rfloor} \la X_{\ell}, \hat{v}_{j,N}- \hat{c}_{j,N} v_j\ra \la \eta_{\ell, 4}, w_i\ra ,\\
A_{\lfloor Nt \rfloor}^{(13)}&=\hat{c}_{j,N} \sum_{\ell=1}^{\lfloor Nt \rfloor} \la X_{\ell}, v_j\ra \la \eta_{\ell, 4}, \hat{w}_{i,N}-\hat{d}_{i,N} w_i \ra .\\
\end{aligned}
$$
Repeating the arguments used in the proofs of  Lemmas \ref{l:eta_1} and \ref{l:eta_2}, one can show that

$$
\begin{aligned}
\sup_{t\in[0,1]} \left| A_{\lfloor Nt \rfloor}^{(10)}\right|&= \Op{1},\\
%\sup_{t\in[0,1]} \left| A_{{\lfloor Nt \rfloor},2}^{(11)}\right|&=\Op{1},\\
\sup_{t\in[0,1]} \left| A_{\lfloor Nt \rfloor}^{(12)}\right|&= \Op{1},\\
\sup_{t\in[0,1]} \left| A_{\lfloor Nt \rfloor}^{(13)}\right|&= \Op{1}.
\end{aligned}
$$
Elementary arguments give
$$
\begin{aligned}
A_{\lfloor Nt \rfloor}^{(11)}%&=\hat{c}_{j,N} \hat{d}_{i,N} \sum_{\ell=1}^{\lfloor Nt \rfloor} \Int \Int %X_{\ell}(z) v_j(z)  w_i(x) \sum_{r=1}^{q} \sum_{n=1}^{p} \hat{d}_{r,N} \psi_{r,n} \hat{c}_{n,N} %\left(\hat{d}_{r,N} w_r(x)-\hat{w}_{r,N}(x)\right)  \\
%&\qquad \times \Int \hat{c}_{n,N} v_n(s) X_{\ell}(s)\ds \dz \dx \\
%&= \hat{c}_{j,N} \hat{d}_{i,N} \Int \Int \Int \sum_{\ell=1}^{\lfloor Nt \rfloor} X_{\ell}(z) X_{\ell}(s) %v_j(z)  w_i(x) \sum_{r=1}^{q} \sum_{n=1}^{p} \hat{d}_{r,N} \psi_{r,n} \hat{c}_{n,N}\\
%&\qquad \times \left(\hat{d}_{r,N} w_r(x)-\hat{w}_{r,N}(x)\right)  \hat{c}_{n,N} v_n(s) \ds \dz \dx\\
&= A_{{\lfloor Nt \rfloor},1}^{(11)}+A_{{\lfloor Nt \rfloor},2}^{(11)},
\end{aligned}
$$
where
$$
\begin{aligned}
A_{{\lfloor Nt \rfloor},2}^{(11)}&=\hat{c}_{j,N} \hat{d}_{i,N} \Int \Int \Int \left(\sum_{\ell=1}^{\lfloor Nt \rfloor} X_{\ell}(z) X_{\ell}(s) -{\lfloor Nt \rfloor}C(z,s) \right)v_j(z)  w_i(x)  \\
&\qquad \times \sum_{r=1}^{q} \sum_{n=1}^{p} \hat{d}_{r,N} \psi_{r,n} \hat{c}_{n,N} \left(\hat{d}_{r,N} w_r(x)-\hat{w}_{r,N}(x)\right)  \hat{c}_{n,N} v_n(s) \ds \dz \dx,
\end{aligned}
$$
and
$$
\begin{aligned}
A_{{\lfloor Nt \rfloor},1}^{(11)}%&={\lfloor Nt \rfloor} \hat{c}_{j,N} \hat{d}_{i,N} \Int \Int \Int C(z,s) %v_j(z)  w_i(x) \sum_{r=1}^{q} \sum_{n=1}^{p} \hat{d}_{r,N} \psi_{r,n} \hat{c}_{n,N} \\
%&\qquad \times  \left(\hat{d}_{r,N} w_r(x)-\hat{w}_{r,N}(x)\right)  \hat{c}_{n,N} v_n(s) \ds \dz \dx\\
%&={\lfloor Nt \rfloor} \hat{c}_{j,N} \hat{d}_{i,N} \Int \Int  \lambda_j v_j(s)  w_i(x) \sum_{r=1}^{q} %\sum_{n=1}^{p} \hat{d}_{r,N} \psi_{r,n} \hat{c}_{n,N} \\
%&\qquad \times \left(\hat{d}_{r,N} w_r(x)-\hat{w}_{r,N}(x)\right)  \hat{c}_{n,N} v_n(s) \ds \dx\\
&={\lfloor Nt \rfloor} \hat{c}_{j,N} \hat{d}_{i,N} \lambda_j \Int    w_i(x) \sum_{r=1}^{q} \hat{d}_{r,N} \psi_{r,j} \hat{c}_{j,N} \left(\hat{d}_{r,N} w_r(x)-\hat{w}_{r,N}(x)\right)  \hat{c}_{j,N} \dx \\
%&={\lfloor Nt \rfloor} \hat{c}_{j,N} \hat{d}_{i,N} \lambda_j  \sum_{r=1}^{q} \hat{d}_{r,N} \psi_{r,j}\Int %w_i(x) \left(\hat{d}_{r,N} w_r(x)-\hat{w}_{r,N}(x)\right) \dx\\
&={\lfloor Nt \rfloor} R_N^{(4)}(i,j).
\end{aligned}
$$

Using  Theorems \ref{Th:eigen} and \ref{Th:menshov} again, we conclude that

$$
\sup_{t\in[0,1]} \left| A_{{\lfloor Nt \rfloor},2}^{(11)}\right|=\Op{\log N},\\
$$
completing the proof.
\end{proof}
%%%%%%%%%%%%%%%%%%%%%%%%%%%%%%%%%%%%%%%%%%%%%%%%%%%%%%%%%%%%%%%%%%%%%%%%%%%%%%%%%%%%%%%%%%%%%%%%%%%%%%%%%%%%%%%%%%%%%%%%%%%%%%%%
%%%%%%%%%%%%%%%%%%%%%%%%%%%%%%%%%%%%%%%%%%%%%%%%%%%%%%%%%%%%%%%%%%%%%%%%%%%%%%%%%%%%%%%%%%%%%%%%%%%%%%%%%%%%%%%%%%%%%%%%%%%%%%%%
\begin{Lemma}\label{l:eta_5} If Assumptions \ref{a:shifts}-\ref{a:eigenu} hold, then we have
$$
\sup_{t\in[0,1]} \left| \sum_{\ell=1}^{\lfloor Nt \rfloor} \la X_{\ell}, \hat{v}_{j,N}\ra \la\eta_{\ell,5}, \hat{w}_{i,N}\ra \right| = \Op{1}.
$$
\end{Lemma}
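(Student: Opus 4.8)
The plan is to exploit the fact that $\eta_{\ell,5}$ carries \emph{two} factors of order $N^{-1/2}$, namely the $w$-difference $\hat{w}_{r,N}-\hat{d}_{r,N}w_r$ and the $v$-difference $\hat{c}_{m,N}v_m-\hat{v}_{m,N}$. This double smallness is exactly what distinguishes the present lemma from Lemmas \ref{l:eta_1}--\ref{l:eta_4}: there a single $N^{-1/2}$ factor left an order $N^{1/2}\log N$ (or order $N$) piece that had to be extracted as the drift term ${\lfloor Nt \rfloor}R_N(i,j)$, whereas here no drift appears and the whole quantity is immediately $\Op{1}$. First I would substitute the definition of $\eta_{\ell,5}$, renaming its internal summation indices to $r,m$ so as not to clash with the fixed $i,j$ of the statement, to obtain
\[
\la\eta_{\ell,5},\hat{w}_{i,N}\ra=\sum_{r=1}^{q}\sum_{m=1}^{p}\hat{d}_{r,N}\psi_{r,m}\hat{c}_{m,N}\,\la\hat{w}_{r,N}-\hat{d}_{r,N}w_r,\hat{w}_{i,N}\ra\,\la\hat{c}_{m,N}v_m-\hat{v}_{m,N},X_\ell\ra.
\]
The crucial observation is that the scalar $\la\hat{w}_{r,N}-\hat{d}_{r,N}w_r,\hat{w}_{i,N}\ra$ does not depend on $\ell$, so it factors out of the sum over $\ell$, leaving the $\ell$-sum $\sum_{\ell=1}^{\lfloor Nt \rfloor}\la X_\ell,\hat{v}_{j,N}\ra\la\hat{c}_{m,N}v_m-\hat{v}_{m,N},X_\ell\ra$ to be controlled separately.

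Next I would estimate the three resulting factors at their respective rates. By the Cauchy-Schwarz inequality, $\|\hat{w}_{i,N}\|=1$, and (\ref{e:eig-w}) of Theorem \ref{Th:eigen}, the extracted scalar satisfies $|\la\hat{w}_{r,N}-\hat{d}_{r,N}w_r,\hat{w}_{i,N}\ra|\le\|\hat{w}_{r,N}-\hat{d}_{r,N}w_r\|=\Op{N^{-1/2}}$. For the $\ell$-sum I would rewrite it as the double integral
\[
\int_0^1\int_0^1\Bigl(\sum_{\ell=1}^{\lfloor Nt \rfloor}X_\ell(z)X_\ell(s)\Bigr)\hat{v}_{j,N}(z)\bigl(\hat{c}_{m,N}v_m(s)-\hat{v}_{m,N}(s)\bigr)\dz\ds,
\]
and apply the Cauchy-Schwarz inequality in $L^2([0,1]^2)$; since the integrand splits as a product of a function of $z$ and a function of $s$, the bound becomes $\bigl\|\sum_{\ell=1}^{\lfloor Nt \rfloor}X_\ell(z)X_\ell(s)\bigr\|\,\|\hat{v}_{j,N}\|\,\|\hat{c}_{m,N}v_m-\hat{v}_{m,N}\|$. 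Using (\ref{e:eig-v}) and $\|\hat{v}_{j,N}\|=1$ gives $\|\hat{c}_{m,N}v_m-\hat{v}_{m,N}\|=\Op{N^{-1/2}}$, while (\ref{e:men-xx}) of Theorem \ref{Th:menshov}, together with the deterministic bound $\lfloor Nt\rfloor\,\|C\|\le N\|C\|$ on the centering term, yields $\sup_{t\in[0,1]}\bigl\|\sum_{\ell=1}^{\lfloor Nt \rfloor}X_\ell(z)X_\ell(s)\bigr\|=\Op{N}$ uniformly in $t$.

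Combining these, the $\ell$-sum is $\Op{N}\cdot 1\cdot\Op{N^{-1/2}}=\Op{N^{1/2}}$ uniformly in $t$, and multiplying by the extracted $\Op{N^{-1/2}}$ scalar gives $\Op{N^{-1/2}}\cdot\Op{N^{1/2}}=\Op{1}$ for each pair $(r,m)$. Because $|\hat{d}_{r,N}\psi_{r,m}\hat{c}_{m,N}|\le|\psi_{r,m}|<\infty$ and the double sum ranges over the finitely many indices $1\le r\le q$, $1\le m\le p$, the supremum over $t$ of the full expression is a finite sum of $\Op{1}$ terms and hence $\Op{1}$, as claimed. I do not anticipate a genuine obstacle here: the only point requiring care is uniformizing the quadratic partial sum over $t$, and this is supplied directly by the maximal inequality (\ref{e:men-xx}); everything else is a routine rate bookkeeping of the kind already carried out in the preceding lemmas.
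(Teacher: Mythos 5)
Your proof is correct and follows essentially the same route as the paper: both exploit the two $\Op{N^{-1/2}}$ eigenfunction-estimation errors from Theorem \ref{Th:eigen} and bound the quadratic partial sum $\sup_{t\in[0,1]}\bigl\|\sum_{\ell=1}^{\lfloor Nt\rfloor}X_\ell(z)X_\ell(s)\bigr\|$ by $\Op{N}$ via the Cauchy--Schwarz inequality, yielding $\Op{N}\Op{N^{-1/2}}\Op{N^{-1/2}}=\Op{1}$. The only cosmetic differences are that you factor the $\ell$-independent scalar $\la\hat{w}_{r,N}-\hat{d}_{r,N}w_r,\hat{w}_{i,N}\ra$ out of the sum before estimating, and you justify the uniform-in-$t$ bound via Theorem \ref{Th:menshov} (which is, if anything, slightly more explicit than the paper's citation of Theorem \ref{Th:sums}).
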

\begin{proof} It follows from Theorems \ref{Th:sums} and \ref{Th:eigen} that
$$
\begin{aligned}
&\sup_{t\in[0,1]}\left| \sum_{\ell=1}^{\lfloor Nt \rfloor} \la X_{\ell}, \hat{v}_{j,N}\ra \la \eta_{\ell,5}, \hat{w}_{i,N}\ra  \right|\\
&=\sup_{t\in[0,1]}\left| \sum_{\ell=1}^{\lfloor Nt \rfloor} \Int \Int X_{\ell}(z) \hat{v}_{j,N}(z)\hat{w}_{i,N}(x) \sum_{r=1}^{q} \sum_{n=1}^{p} \hat{d}_{r,N} \psi_{r,n} \hat{c}_{n,N} \left(\hat{w}_{r,N}(x)- \hat{d}_{r,N} w_r(x)\right) \right. \\
&\qquad \times \left.\Int\left( \hat{c}_{n,N} v_n(s)-\hat{v}_{n,N}(s)\right) X_{\ell}(s)\ds \dz \dx \right|\\
&\le \sum_{r=1}^{q} \sum_{n=1}^{p} |\psi_{r,n}| \sup_{t\in[0,1]}\left\| \sum_{\ell=1}^{\lfloor Nt \rfloor}  X_{\ell}(z)  X_{\ell}(s) \right\|  \left\| \hat{w}_{i,N}(x)
\right\| \left\|\hat{v}_{j,N}(z) \right\|  \left\|\hat{w}_{r,N}(x)- \hat{d}_{r,N} w_r(x)\right\| \\
&\qquad \times \left\|\hat{c}_{n,N} v_n(s)-\hat{v}_{n,N}(s)\right\|\\
&=\Op{N}\Op{N^{-1/2}}\Op{N^{-1/2}}.
%&=\Op{1}
\end{aligned}
$$
\end{proof}
%%%%%%%%%%%%%%%%%%%%%%%%%%%%%%%%%%%%%%%%%%%%%%%%%%%%%%%%%%%%%%%%%%%%%%%%%%%%%%%%%%%%%%%%%%%%%%%%%%%%%%%%%%%%%%%%%%%%%%%%%%%%%%%%
%%%%%%%%%%%%%%%%%%%%%%%%%%%%%%%%%%%%%%%%%%%%%%%%%%%%%%%%%%%%%%%%%%%%%%%%%%%%%%%%%%%%%%%%%%%%%%%%%%%%%%%%%%%%%%%%%%%%%%%%%%%%%%%%
\begin{Lemma}\label{l:combining} If Assumptions \ref{a:shifts}-\ref{a:PSImoments} hold, then we have
$$
\sup_{t\in[0,1]} \left|\sum_{\ell=1}^{\lfloor Nt \rfloor} \la X_{\ell}, \hat{v}_{j,N}\ra \la \epsilon_{\ell}^{**}, \hat{w}_{i,N}\ra  - \left( {\lfloor Nt \rfloor} R_N(i,j) + \hat{c}_{j,N}\hat{d}_{i,N} \sum_{\ell =1}^{\lfloor Nt \rfloor} \gamma_{\ell}(i,j)\right)\right| = \Op{\log N}.
$$
\end{Lemma}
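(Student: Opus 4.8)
The plan is to split the summand according to the six-term expansion \eqref{eq:6partsprod} of $\la\epsilon^{**}_\ell,\hat{w}_{i,N}\ra$, feed each piece into the matching estimate among Lemmas \ref{l:epsilon}--\ref{l:eta_5}, and then observe that the surviving main terms reassemble into the claimed quantity. Concretely, I would write
$$
\sum_{\ell=1}^{\lfloor Nt \rfloor}\la X_\ell,\hat{v}_{j,N}\ra\la\epsilon^{**}_\ell,\hat{w}_{i,N}\ra
=\sum_{\ell=1}^{\lfloor Nt \rfloor}\la X_\ell,\hat{v}_{j,N}\ra\la\epsilon_\ell,\hat{w}_{i,N}\ra
+\sum_{m=1}^{5}\sum_{\ell=1}^{\lfloor Nt \rfloor}\la X_\ell,\hat{v}_{j,N}\ra\la\eta_{\ell,m},\hat{w}_{i,N}\ra,
$$
and apply Lemma \ref{l:epsilon} to the $\epsilon_\ell$ term and Lemmas \ref{l:eta_1}, \ref{l:eta_2}, \ref{l:eta_3}, \ref{l:eta_4}, \ref{l:eta_5} to the five $\eta$-terms in turn. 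Each lemma replaces its term by an explicit main term at the cost of a uniform error that is $\Op{\log N}$, and merely $\Op{1}$ for the $\eta_{\ell,5}$ piece.

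The only genuine computation is to confirm that these main terms match the target. On the drift side this is automatic from the definition $R_N(i,j)=\sum_{m=1}^{4}R_N^{(m)}(i,j)$, because Lemmas \ref{l:eta_1}--\ref{l:eta_4} contribute precisely $\lfloor Nt\rfloor R_N^{(m)}(i,j)$. For the fluctuation part I would verify the identity
$$
T^{(1)}_{\lfloor Nt\rfloor}(i,j)+T^{(2)}_{\lfloor Nt\rfloor}(i,j)=\sum_{\ell=1}^{\lfloor Nt\rfloor}\gamma_\ell(i,j).
$$
The term $T^{(1)}_{\lfloor Nt\rfloor}(i,j)=\sum_\ell\la X_\ell,v_j\ra\la\epsilon_\ell,w_i\ra$ is already the first summand of $\gamma_\ell(i,j)$, so it suffices to identify $T^{(2)}_{\lfloor Nt\rfloor}(i,j)$ with the second summand $\sum_\ell\la X_\ell,v_j\ra\la X_\ell,u_i\ra$, where $u_i=\sum_{r=p+1}^{\infty}\psi_{i,r}v_r$. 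The factor $X_\ell(s)X_\ell(z)$ inside $T^{(2)}$ yields $\la X_\ell,v_j\ra\la X_\ell,u_i\ra$ directly, so what must vanish is the subtracted piece $\Int\Int C(z,s)\sum_{r=p+1}^{\infty}\psi_{i,r}v_r(s)v_j(z)\dz\ds$. Here I would use that the $v_m$ are orthonormal eigenfunctions of $C$: thus $\Int C(z,s)v_r(s)\ds=\lambda_r v_r(z)$, and then $\Int v_r(z)v_j(z)\dz=1\{r=j\}=0$ for every $r\ge p+1$ and $j\le p$, so the $C$-term drops out and $T^{(2)}_{\lfloor Nt\rfloor}(i,j)=\sum_\ell\la X_\ell,v_j\ra\la X_\ell,u_i\ra$.

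With this identity in hand, I would subtract the claimed expression and apply the triangle inequality, bounding the supremum in the statement by the sum of the six supremum errors above; since each is $\Op{\log N}$ (or $\Op{1}$) and the number of terms is finite, the total is again $\Op{\log N}$. I do not foresee a real obstacle here: the lemma is essentially a bookkeeping step that aggregates the six preceding estimates, and the single point that needs care is the orthogonality computation eliminating the $C(z,s)$ contribution to $T^{(2)}$, which rests on $j\le p<r$ forcing $1\{r=j\}=0$.
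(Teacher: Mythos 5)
Your proposal is correct and follows essentially the same route as the paper: decompose $\la\epsilon^{**}_\ell,\hat{w}_{i,N}\ra$ via \eqref{eq:6partsprod}, invoke Lemmas \ref{l:epsilon}--\ref{l:eta_5} to extract the drift terms $\lfloor Nt\rfloor R_N^{(m)}(i,j)$ and fluctuation terms $T^{(1)}_{\lfloor Nt\rfloor}+T^{(2)}_{\lfloor Nt\rfloor}$, and then identify $T^{(2)}_{\lfloor Nt\rfloor}(i,j)$ with $\sum_\ell\la X_\ell,v_j\ra\la X_\ell,u_i\ra$ by the orthogonality argument that kills the $C(z,s)$ contribution. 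The paper performs that last orthogonality step by integrating $C(z,s)$ against $v_j(z)$ first (yielding $\lambda_j v_j(s)$, then orthogonality to $v_r$, $r\ge p+1$), whereas you integrate against $v_r(s)$ first; by symmetry of $C$ these are the same computation.
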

\begin{proof}

Combining Lemmas \ref{l:epsilon} - \ref{l:eta_5}, we immediately see that
$$
\sup_{t\in[0,1]} \left|\sum_{\ell=1}^{\lfloor Nt \rfloor} \la X_{\ell}, \hat{v}_{j,N}\ra \la \epsilon_{\ell}^{**}, \hat{w}_{i,N}\ra  - \left({\lfloor Nt \rfloor} R_N(i,j)+\hat{c}_{j,N}\hat{d}_{i,N} T_{\lfloor Nt \rfloor}(i,j) \right)\right| = \Op{\log N},
$$
where
$$
T_{\lfloor Nt \rfloor}(i,j)=T_{\lfloor Nt \rfloor}^{(1)}(i,j)+T_{\lfloor Nt \rfloor}^{(2)}(i,j).
$$
Thus we need only to show that
$$
T_{\lfloor Nt \rfloor}^{(2)}(i,j) = \sum_{\ell =1}^{\lfloor Nt \rfloor}\la X_\ell, v_j\ra \la X_\ell, u_i\ra.
$$
However, using that the $v_j$'s are orthogonal eigenfunctions of $C$, we get that
$$
\begin{aligned}
\Int \Int C(z,s) v_j(z) \sum_{r=p+1}^{\infty} \psi_{i,r}v_r(s) \ds \dz=\lambda_j
\Int v_j(s) \sum_{r=p+1}^{\infty} \psi_{i,r}v_r(s) \ds \dz
=0,
\end{aligned}
$$
completing the proof.
\end{proof}

\begin{Lemma}\label{l:betahat} If Assumptions \ref{a:shifts}-\ref{a:PSImoments} hold, then we have
$$
\left|\sqrt{N}(\boldsymbol \beta-\hat{\boldsymbol \beta}_N)\right|=\Op{1}.
$$
\end{Lemma}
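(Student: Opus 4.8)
The plan is to reduce everything to the least-squares normal equations and then estimate the two resulting factors separately. Since the stacked model reads $\hat{\mathbf Y}_N=\hat{\mathbf Z}_N\bbeta+\hat{\boldsymbol\Delta}_N$ and $\hat{\bbeta}_N=(\hat{\mathbf Z}_N^T\hat{\mathbf Z}_N)^{-1}\hat{\mathbf Z}_N^T\hat{\mathbf Y}_N$, substituting the model into the estimator gives the exact identity
$$
\sqrt{N}(\bbeta-\hat{\bbeta}_N)=-\left(\frac{1}{N}\hat{\mathbf Z}_N^T\hat{\mathbf Z}_N\right)^{-1}\frac{1}{N^{1/2}}\hat{\mathbf Z}_N^T\hat{\boldsymbol\Delta}_N.
$$
Thus it suffices to show that the inverted Gram matrix is $\Op{1}$ and that the score vector $N^{-1/2}\hat{\mathbf Z}_N^T\hat{\boldsymbol\Delta}_N$ is $\Op{1}$; a product of two $\Op{1}$ factors is $\Op{1}$, and because $|\cdot|$ is the maximum over the finitely many ($pq$) coordinates, coordinatewise bounds will be enough.

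For the Gram factor I would invoke \eqref{e:prod-2} of Lemma \ref{l:convergetozero}, which gives $\frac{1}{N}\hat{\mathbf Z}_N^T\hat{\mathbf Z}_N\inP{\mathbf C}={\mathbf I}_q\otimes{\boldsymbol\Lambda}$. The limit is invertible, since Assumption \ref{a:eigenu} forces $\lambda_p>\lambda_{p+1}\ge 0$, so ${\boldsymbol\Lambda}$ has strictly positive diagonal. The continuous mapping theorem applied to matrix inversion then yields $(\frac{1}{N}\hat{\mathbf Z}_N^T\hat{\mathbf Z}_N)^{-1}\inP{\mathbf C}^{-1}$, which is in particular $\Op{1}$.

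For the score vector I would evaluate Lemma \ref{l:combining} at $t=1$, giving, for each $(i,j)$,
$$
\sum_{\ell=1}^N\la X_\ell,\hat{v}_{j,N}\ra\la\epsilon_\ell^{**},\hat{w}_{i,N}\ra=N R_N(i,j)+\hat{c}_{j,N}\hat{d}_{i,N}\sum_{\ell=1}^N\gamma_\ell(i,j)+\Op{\log N}.
$$
Recalling \eqref{eq:zzz} and the definition of ${\boldsymbol\zeta}_N$, this is exactly, in vector form,
$$
\frac{1}{N^{1/2}}\hat{\mathbf Z}_N^T\hat{\boldsymbol\Delta}_N=N^{1/2}{\mathbf R}_N+{\boldsymbol\zeta}_N\frac{1}{N^{1/2}}\sum_{\ell=1}^N\bgamma_\ell+\Op{N^{-1/2}\log N}.
$$
The remainder is $\op{1}$. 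The stochastic term is $\Op{1}$, since Theorem \ref{aueapp} at $t=1$ gives $N^{-1/2}\sum_\ell\bgamma_\ell\inD\bW_{{\boldsymbol\Sigma}}(1)$, hence $\Op{1}$, while ${\boldsymbol\zeta}_N$ is a matrix of random signs satisfying $|{\boldsymbol\zeta}_N|\le 1$.

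The hard part will be the drift term $N^{1/2}{\mathbf R}_N$, where I must verify that each $R_N^{(m)}(i,j)$ is $\Op{N^{-1/2}}$. Each is an inner product (or a finite or infinite sum of inner products) pairing an eigenfunction-estimation error against a fixed $L^2$ function, and Theorem \ref{Th:eigen} supplies $\|\hat{w}_{i,N}-\hat{d}_{i,N}w_i\|=\Op{N^{-1/2}}$ and $\|\hat{v}_{j,N}-\hat{c}_{j,N}v_j\|=\Op{N^{-1/2}}$. For $R_N^{(1)}$ I would use Bessel's inequality, $\big|\sum_{r>q}\psi_{r,j}\la w_r,\hat{w}_{i,N}-\hat{d}_{i,N}w_i\ra\big|\le(\sum_r\psi_{r,j}^2)^{1/2}\|\hat{w}_{i,N}-\hat{d}_{i,N}w_i\|$, the first factor being finite by Assumption \ref{a:PSImoments}. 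For $R_N^{(2)}$ the fixed function $\sum_{n>p}\psi_{i,n}\lambda_n v_n$ has finite norm, since $\sum_n\psi_{i,n}^2\lambda_n^2\le\lambda_1^2\sum_n\psi_{i,n}^2<\infty$, so Cauchy--Schwarz again gives $\Op{N^{-1/2}}$; and $R_N^{(3)},R_N^{(4)}$ are finite sums of Cauchy--Schwarz-bounded inner products, each $\Op{N^{-1/2}}$. Hence ${\mathbf R}_N=\Op{N^{-1/2}}$ and $N^{1/2}{\mathbf R}_N=\Op{1}$, so $N^{-1/2}\hat{\mathbf Z}_N^T\hat{\boldsymbol\Delta}_N=\Op{1}$. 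Combining this with the Gram-factor bound in the displayed identity proves the lemma.
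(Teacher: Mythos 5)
Your proposal is correct and follows essentially the same route as the paper's proof: the same normal-equations identity, the bound on $\bigl(\frac{1}{N}\hat{\mathbf Z}_N^T\hat{\mathbf Z}_N\bigr)^{-1}$ via \eqref{e:prod-2}, Lemma \ref{l:combining} at $t=1$ for the score, Theorem \ref{aueapp} for the stochastic term, and Theorem \ref{Th:eigen} for the drift. The only difference is that you spell out the Cauchy--Schwarz/Bessel estimates showing each $R_N^{(m)}(i,j)=\Op{N^{-1/2}}$, a step the paper states without detail.
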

\begin{proof} It is easy to see that
$$
\sqrt{N}(\boldsymbol \beta-\hat{\boldsymbol \beta}_N)= -N^{-1/2}\left(\frac{\hat{\mathbf Z}_N^T\hat{\mathbf Z}_N}{N}\right)^{-1}\hat{\mathbf Z}_N^T\hat{\boldsymbol \Delta}_N.
$$
It follows from (\ref{e:prod-2}) that
$$
\left|\left(\frac{\hat{\mathbf Z}_N^T\hat{\mathbf Z}_N}{N}\right)^{-1}\right|=\Op{1}.
$$
Lemma \ref{l:combining} and \eqref{eq:zzz} yield that
$$
\left|\hat{\mathbf Z}_N^T\hat{\boldsymbol \Delta}_N\right|\leq \max_{1\leq i\leq q, 1\leq j \leq p}\left\{N|R_N(i,j)| +\left| \sum_{\ell=1}^N\gamma_\ell(i,j)\right| \right\}+\Op{\log N}.
$$
It follows from  Theorem \ref{Th:eigen} that for all $1\leq i\leq q, 1\leq j \leq p$
$$
N|R_N(i,j)|=\Op{N^{1/2}}
$$
while Theorem \ref{aueapp} implies that
$$
\left| \sum_{\ell=1}^N\gamma_\ell(i,j)\right|=\Op{N^{1/2}}.
$$
\end{proof}
%%%%%%%%%%%%%%%%%%%%%%%%%%%%%%%%%%%%%%%%%%%%%%%%%%%%%%%%%%%%%%%%%%%%%%%%%%%%%%%%%%%%%%%%%%%%%%%%%%%%%%%%%%%%%%%%%%%%%%%%%%%%%%%%
%%%%%%%%%%%%%%%%%%%%%%%%%%%%%%%%%%%%%%%%%%%%%%%%%%%%%%%%%%%%%%%%%%%%%%%%%%%%%%%%%%%%%%%%%%%%%%%%%%%%%%%%%%%%%%%%%%%%%%%%%%%%%%%%
\begin{Lemma}\label{l:just-sums} If Assumptions \ref{a:shifts}-\ref{a:PSImoments} hold, then we have
$$
\sup_{t\in [0,1]}\left|\tilde{\bV}_N(t)-{\boldsymbol \zeta}_N \frac{1}{N^{1/2}}\left(\sum_{\ell=1}^{\lfloor Nt \rfloor}{\boldsymbol \gamma}_{\ell} - t\sum_{\ell=1}^{N}{\boldsymbol \gamma}_{\ell}\right) \right|=\op{1}.
$$
\end{Lemma}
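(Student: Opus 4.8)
The plan is to build directly on the decomposition (\ref{eq:star}) of $\tilde{\bV}_N(t)$ together with the already-established Lemma \ref{l:combining}. From (\ref{eq:star}) we have
$$
\tilde{\bV}_N(t)=N^{-1/2}\left(\hat{\mathbf Z}_{\lfloor Nt \rfloor}^T \hat{\boldsymbol \Delta}_{\lfloor Nt \rfloor} - t\hat{\mathbf Z}_{N}^T \hat{\boldsymbol \Delta}_{N}\right)+\left(\frac{\hat{\mathbf Z}_{\lfloor Nt \rfloor}^T \hat{\mathbf Z}_{\lfloor Nt \rfloor}- t\hat{\mathbf Z}_{N}^T \hat{\mathbf Z}_{N}}{N}\right)\left(\boldsymbol \beta-\hat{\boldsymbol \beta}_N\right)\sqrt{N},
$$
and the second summand is $\op{1}$ uniformly in $t$, by combining (\ref{eq:bound}) (Lemma \ref{l:betahat}) with (\ref{eq:littleo}) (Lemma \ref{l:convergetozero}), exactly as noted in the outline preceding Lemma \ref{l:converge}. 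Hence it suffices to show that the leading term $N^{-1/2}\bigl(\hat{\mathbf Z}_{\lfloor Nt \rfloor}^T \hat{\boldsymbol \Delta}_{\lfloor Nt \rfloor} - t\hat{\mathbf Z}_{N}^T \hat{\boldsymbol \Delta}_{N}\bigr)$ lies, uniformly in $t$, within $\op{1}$ of ${\boldsymbol \zeta}_N N^{-1/2}\bigl(\sum_{\ell=1}^{\lfloor Nt\rfloor}{\boldsymbol \gamma}_\ell - t\sum_{\ell=1}^N{\boldsymbol \gamma}_\ell\bigr)$.

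Next I would work one coordinate at a time. By (\ref{eq:zzz}), the $(i,j)$ entry of $\hat{\mathbf Z}_{\lfloor Nt\rfloor}^T\hat{\boldsymbol\Delta}_{\lfloor Nt\rfloor}$ is $\sum_{\ell=1}^{\lfloor Nt\rfloor}\la X_\ell,\hat v_{j,N}\ra\la\epsilon^{**}_\ell,\hat w_{i,N}\ra$, and Lemma \ref{l:combining} gives, uniformly in $t$,
$$
\sum_{\ell=1}^{\lfloor Nt\rfloor}\la X_\ell,\hat v_{j,N}\ra\la\epsilon^{**}_\ell,\hat w_{i,N}\ra={\lfloor Nt\rfloor}R_N(i,j)+\hat c_{j,N}\hat d_{i,N}\sum_{\ell=1}^{\lfloor Nt\rfloor}\gamma_\ell(i,j)+\Op{\log N}.
$$
Specializing to $t=1$ gives the same expansion for the $(i,j)$ entry of $\hat{\mathbf Z}_N^T\hat{\boldsymbol\Delta}_N$ with $\lfloor Nt\rfloor$ replaced by $N$. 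Subtracting $t$ times the latter from the former, the cumulative-sum pieces produce exactly $\hat c_{j,N}\hat d_{i,N}\bigl(\sum_{\ell=1}^{\lfloor Nt\rfloor}\gamma_\ell(i,j)-t\sum_{\ell=1}^N\gamma_\ell(i,j)\bigr)$, the two $\Op{\log N}$ remainders combine (since $t\le 1$) into a single $\Op{\log N}$, and the drift terms collapse to $(\lfloor Nt\rfloor-tN)R_N(i,j)$.

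The only delicate point — the step I expect to be the main obstacle — is controlling this leftover drift $(\lfloor Nt\rfloor-tN)R_N(i,j)$, since the linear drift ${\lfloor Nt\rfloor}R_N(i,j)$ cancels only up to the rounding discrepancy. Here I would use $|\lfloor Nt\rfloor-tN|\le 1$ together with the bound $|R_N(i,j)|=\Op{N^{-1/2}}$, which follows from Theorem \ref{Th:eigen} precisely as in the proof of Lemma \ref{l:betahat} (where $N|R_N(i,j)|=\Op{N^{1/2}}$ was shown); this yields $(\lfloor Nt\rfloor-tN)R_N(i,j)=\Op{N^{-1/2}}$ uniformly in $t$. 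Dividing the whole expression by $N^{1/2}$ then converts the drift remainder into $\Op{N^{-1}}$ and the combined error into $\Op{N^{-1/2}\log N}$, both $\op{1}$. Finally, recalling that ${\boldsymbol \zeta}_N{\boldsymbol \gamma}_\ell=\vectorization(\{\hat c_{j,N}\hat d_{i,N}\gamma_\ell(i,j)\}^T)$ and that ${\boldsymbol \zeta}_N=\hat{\mathbf D}_N\otimes\hat{\mathbf C}_N$ is a diagonal matrix of signs, the $pq$ componentwise identities reassemble into the asserted vector identity; since $|\cdot|$ denotes the maximum over the finitely many $pq$ coordinates, a uniform $\op{1}$ bound on each coordinate delivers the uniform $\op{1}$ bound for the whole vector, completing the argument.
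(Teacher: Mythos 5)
Your proposal is correct and follows essentially the same route as the paper: it combines the decomposition \eqref{eq:star} with Lemmas \ref{l:convergetozero} and \ref{l:betahat} to isolate the leading term, then applies Lemma \ref{l:combining} and \eqref{eq:zzz} coordinatewise to reduce to the $\gamma_\ell$ sums. Your explicit treatment of the rounding drift $(\lfloor Nt\rfloor - tN)R_N(i,j)$ via $|\lfloor Nt\rfloor - tN|\le 1$ and $|R_N(i,j)|=\Op{N^{-1/2}}$ spells out a step the paper's proof leaves implicit in its closing ``therefore the proof is complete,'' and it is exactly the right justification.
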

\begin{proof} Lemmas \ref{l:convergetozero} and \ref{l:betahat} and \eqref{eq:star} imply that
$$
\sup_{t\in [0,1]}\biggl|\tilde{\bV}_N(t)-\frac{1}{N^{1/2}}\left(\hat{\mathbf Z}_{\lfloor Nt \rfloor}^T \hat{\boldsymbol \Delta}_{\lfloor Nt \rfloor} - t\hat{\mathbf Z}_{N}^T \hat{\boldsymbol \Delta}_{N}\right)\biggl|=\op{1}.
$$
It also follows from Lemma \ref{l:combining} and \eqref{eq:zzz}
$$
\sup_{t\in [0,1]}\biggl|\hat{\mathbf Z}_{\lfloor Nt \rfloor}^T \hat{\boldsymbol \Delta}_{\lfloor Nt \rfloor}-\mbox{vec}\biggl(\biggl\{ {\lfloor Nt \rfloor} R_N(i,j) + \hat{c}_{j,N}\hat{d}_{i,N} \sum_{\ell =1}^{\lfloor Nt \rfloor} \gamma_{\ell}(i,j)\biggl\}^T\biggl)\biggl| = \Op{\log N},
$$
and therefore the proof is complete.
\end{proof}

Now we have all the necessary tools to prove the main result.
\begin{proof}[{\bf Proof of Theorem \ref{Th:main}}] It follows from Lemma \ref{l:just-sums} and Theorem \ref{aueapp} that
$$
{\boldsymbol \zeta}_N\tilde{\bV}_N(t)\;\;\;\stackrel{{\cal D}^{pq}[0,1]}{\longrightarrow}\;\;\; \bW_{{\boldsymbol \Sigma}}(t)-t\bW_{{\boldsymbol \Sigma}}(1).
$$
Next we observe that
$$
\left\{{\boldsymbol \Sigma}^{-1/2}(\bW_{{\boldsymbol \Sigma}}(t)-t\bW_{{\boldsymbol \Sigma}}(1)), 0\leq t\leq 1\right\} \stackrel{{\cal D}}{=}\{{\boldsymbol B}(t),0\leq t\leq 1\},
$$
where ${\boldsymbol B}(t)=({\mathcal B}_1(t), \ldots {\mathcal B}_{pq}(t))^T$ and  ${\mathcal B}_1, \ldots {\mathcal B}_{pq}$ are independent, identically distributed Brownian bridges. Hence
$$
({\boldsymbol \zeta}_N\tilde{\bV}_N(t))^T{\boldsymbol \Sigma}^{-1}({\boldsymbol \zeta}_N\tilde{\bV}_N(t))\;\;\;\stackrel{{\cal D}[0,1]}{\longrightarrow}\;\;\;\sum_{\ell=1}^{pq}{\mathcal B}_\ell^2(t).
$$
Now, using Assumption \ref{a:consistent} with Slutsky's lemma, the proof is complete.
\end{proof}

%%%%%%%%%%%%%%%%%%%%%%%%%%%%%%%%%%%%%%%%%%%%%%%%%%%%%%%%%%%%%%%%%%%%%%%%%%%%%%%%%%%%%%%%%%%%%%%%%%%%%%%%%%%%%%%%%%%%%%%%%%%%%%%%
%%%%%%%%%%%%%%%%%%%%%%%%%%%%%%%%%%%%%%%%%%%%%%%%%%%%%%%%%%%%%%%%%%%%%%%%%%%%%%%%%%%%%%%%%%%%%%%%%%%%%%%%%%%%%%%%%%%%%%%%%%%%%%%%

\section{Proof of Theorems \ref{Th:bart-1} and \ref{Th:bart-2}}\label{s:dog}
We can assume without loss of generality that $K(u)=0$ if $|u|>1$.
Let $m$ be a positive integer and define
$$
\bgamma_\ell^{(m)}=\mbox{vec}(\{\gamma_\ell^{(m)}(i,j), 1\leq i \leq q, 1\leq j \leq p\}^T),
$$
where
$$
\gamma_\ell^{(m)}(i,j)=\la X_\ell^{(m)}, v_j\ra \la \epsilon_\ell^{(m)}, w_i\ra+
\la X_\ell^{(m)}, v_j\ra \la X_\ell^{(m)}, u_i\ra .
$$
The long term covariance matrix associated with the stationary sequence $\{\bgamma_\ell^{(m)}, 1\leq \ell <\infty\}$ is given by
$$
 {\boldsymbol \Sigma}^{(m)}=\E{\bgamma}_1^{(m)}({\bgamma}_1^{(m)})^T + \sum_{\ell=1}^{\infty} \E{\bgamma}_1^{(m)}( {\bgamma}_{\ell+1}^{(m)})^T + \sum_{\ell=1}^{\infty} \E {\bgamma}_{\ell+1}^{(m)}( {\bgamma}_1^{(m)})^T.
$$
The corresponding Bartlett estimator is defined as
$$
\tilde{\boldsymbol \Sigma}_N^{(m)} = \sum_{k=-(N-1)}^{N-1} K(k/B_N) {\boldsymbol \phi}_{k,N}^{(m)},
$$
where
$$
{\boldsymbol \phi}_{k,N}^{(m)}= \frac{1}{N} \sum_{\ell=\max(1,1-k)}^{\min(N,N-k)} {\bgamma}_{\ell}^{(m)}({\bgamma}_{\ell+k}^{(m)})^T
$$
are the sample covariances of lag $k$. Since $K$ is symmetric, $K(0)=1$  and $K(u)=0$ outside $[-1,1]$ we have that
$$
\tilde{\boldsymbol \Sigma}_N^{(m)} = {\boldsymbol \phi}_{0,N}^{(m)} +\sum_{k=1}^{B_N} K(k/B_N) {\boldsymbol \phi}_{k,N}^{(m)}+\sum_{k=1}^{B_N} K(k/B_N)( {\boldsymbol \phi}_{k,N}^{(m)})^T
$$
for all sufficiently large $N$.\\

 We start with the consistency of $\tilde{\boldsymbol \Sigma}_N^{(m)}$.
\begin{Lemma}\label{l:m-dep} If Assumptions %\ref{a:PSImoments},
\ref{a:K} and
\ref{a:kernel} are satisfied, then we have for every $m$
$$
\tilde{\boldsymbol \Sigma}_N^{(m)}\convP {\boldsymbol \Sigma}^{(m)},
$$
as $N\rightarrow \infty.$
\end{Lemma}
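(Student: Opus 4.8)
The plan is to exploit the structure of the approximating sequence. For each fixed $m$, the vectors $\{\bgamma_\ell^{(m)}, 1\le\ell<\infty\}$ form a stationary, $m$-dependent sequence: $X_\ell^{(m)}$ and $\epsilon_\ell^{(m)}$ are built from the innovations $\boldsymbol\eta_\ell,\dots,\boldsymbol\eta_{\ell-m+1}$ together with independent copies tied to the index $\ell$, so $\bgamma_\ell^{(m)}$ and $\bgamma_{\ell'}^{(m)}$ are independent whenever $|\ell-\ell'|\ge m$. Writing $\boldsymbol\Gamma_k^{(m)}=\E\bgamma_1^{(m)}(\bgamma_{1+k}^{(m)})^T$ for the lag-$k$ covariance, this independence forces $\boldsymbol\Gamma_k^{(m)}=\mathbf 0$ for all $|k|\ge m$, so the infinite series defining $\boldsymbol\Sigma^{(m)}$ collapse to the finite sum $\boldsymbol\Gamma_0^{(m)}+\sum_{k=1}^{m-1}(\boldsymbol\Gamma_k^{(m)}+(\boldsymbol\Gamma_k^{(m)})^T)$. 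The standing moment conditions (Assumptions \ref{a:Xmoments} and \ref{a:PSImoments}) guarantee $\E|\bgamma_0^{(m)}|^4<\infty$, since $|\la X_\ell^{(m)},v_j\ra|\le\|X_\ell^{(m)}\|$ and $\|u_i\|<\infty$, and this bound is what the fluctuation estimate below will require.

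I would then split $\tilde{\boldsymbol\Sigma}_N^{(m)}-\boldsymbol\Sigma^{(m)}=(\E\tilde{\boldsymbol\Sigma}_N^{(m)}-\boldsymbol\Sigma^{(m)})+(\tilde{\boldsymbol\Sigma}_N^{(m)}-\E\tilde{\boldsymbol\Sigma}_N^{(m)})$ and treat the bias and the fluctuation separately. For the bias, using $\E{\boldsymbol\phi}_{k,N}^{(m)}=\tfrac{N-|k|}{N}\boldsymbol\Gamma_k^{(m)}$ together with $\boldsymbol\Gamma_k^{(m)}=\mathbf 0$ for $|k|\ge m$, only the lags $|k|<m$ survive, giving $\E\tilde{\boldsymbol\Sigma}_N^{(m)}-\boldsymbol\Sigma^{(m)}=\sum_{|k|<m}\big(K(k/B_N)\tfrac{N-|k|}{N}-1\big)\boldsymbol\Gamma_k^{(m)}$. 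Since $B_N\to\infty$, the Lipschitz (hence continuous) kernel of Assumption \ref{a:K} satisfies $K(k/B_N)\to K(0)=1$ for each fixed $k$, and $\tfrac{N-|k|}{N}\to1$; being a fixed finite sum, the bias tends to $\mathbf 0$.

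The main work — and the step I expect to be the genuine obstacle — is to show that the fluctuation has entrywise variance tending to $0$. Expanding $\E|\tilde{\boldsymbol\Sigma}_N^{(m)}-\E\tilde{\boldsymbol\Sigma}_N^{(m)}|^2$ entrywise produces a sum over lags $k,k'$ (each effectively restricted to $|k|,|k'|\le B_N$ by the bounded support of $K$) and time indices $\ell,\ell'$ of terms $K(k/B_N)K(k'/B_N)N^{-2}\Cov\big(\bgamma_\ell^{(m)}(\bgamma_{\ell+k}^{(m)})^T,\,\bgamma_{\ell'}^{(m)}(\bgamma_{\ell'+k'}^{(m)})^T\big)$. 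Here the $m$-dependence is decisive. Decomposing each covariance through the pairings of the four time points $\ell,\ell+k,\ell',\ell'+k'$, one finds that a nonvanishing contribution forces these indices to be interlocked to within distance $m$: the surviving ``cross'' pairings pin $\ell'$ to within $O(m)$ of $\ell$ or $\ell+k$ and \emph{simultaneously} pin $k'$ to within $O(m)$ of $\pm k$, while the genuine fourth-order cumulant term confines all four indices to a window of width $O(m)$. Thus, of the $O(N^2B_N^2)$ index–lag quadruples, only $O(NB_N)$ carry a nonzero covariance; each is bounded by a fixed multiple of $\E|\bgamma_0^{(m)}|^4<\infty$ by Cauchy--Schwarz, and $|K|$ is bounded, so collecting these against the $N^{-2}$ prefactor yields a variance of order $B_N/N$, which tends to $0$ by Assumption \ref{a:kernel}. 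Chebyshev's inequality then gives $\tilde{\boldsymbol\Sigma}_N^{(m)}\convP\boldsymbol\Sigma^{(m)}$. The delicate point is performing the bookkeeping carefully enough to extract the correct $O(B_N/N)$ rate rather than the crude $O(B_N^2/N)$ that would instead demand $B_N=o(N^{1/2})$; this is exactly where the pairing structure of the $m$-dependent covariances, forcing $k'\approx\pm k$, must be used.
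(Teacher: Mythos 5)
Your identification of $\boldsymbol\Sigma^{(m)}$ as a finite sum via $m$-dependence, your bias computation, and even your quadruple-counting (the observation that the surviving pairings force $k'\approx\pm k$, giving $O_m(NB_N)$ rather than $O_m(N B_N^2)$ terms) are all sound. The genuine gap is the moment condition your fluctuation bound rests on: you claim that Assumptions \ref{a:Xmoments} and \ref{a:PSImoments} give $\E|\bgamma_0^{(m)}|^4<\infty$, and they do not. Since $\gamma_0^{(m)}(i,j)=\la X_0^{(m)},v_j\ra\la \epsilon_0^{(m)},w_i\ra+\la X_0^{(m)},v_j\ra\la X_0^{(m)},u_i\ra$, one has $|\gamma_0^{(m)}(i,j)|\le \|X_0^{(m)}\|\,\|\epsilon_0^{(m)}\|+\|u_i\|\,\|X_0^{(m)}\|^2$, so fourth moments of $\gamma^{(m)}$ require quantities of the type $\E\|X_0\|^8$ and $\E\bigl(\|X_0\|^4\|\epsilon_0\|^4\bigr)$ (recall $(X_0^{(m)},\epsilon_0^{(m)})$ has the same law as $(X_0,\epsilon_0)$). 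Assumption \ref{a:Xmoments} gives only $\int_0^1\E X_0^4(t)\dt<\infty$ and $\int_0^1\E\epsilon_0^4(t)\dt<\infty$, i.e.\ $\E\|X_0\|^4<\infty$ and $\E\|\epsilon_0\|^4<\infty$; taking $X_0(t)=Zf(t)$ with $Z$ having finite fourth but infinite eighth moment shows $\E|\gamma_0(i,j)|^4=\infty$ is perfectly compatible with all the stated assumptions. This is fatal to your scheme at its very first step: already the lag-zero term $\boldsymbol\phi_{0,N}^{(m)}$ has entrywise variance involving $\Var\bigl(\gamma_\ell^{(m)}(i)\gamma_\ell^{(m)}(j)\bigr)$, which may be infinite, so "entrywise variance tending to $0$" cannot even be formulated, let alone combined with Chebyshev. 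The issue is not cosmetic: the paper explicitly remarks after Theorem \ref{Th:bart-1} that its argument needs only \emph{second} moments of $\bgamma_\ell$.

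The paper's proof is engineered to avoid exactly this. It splits $\tilde{\boldsymbol\Sigma}_N^{(m)}$ into lags $|k|\le m$ and lags $m<k\le B_N$. The low lags are handled by the ergodic theorem applied to the stationary sequence $\bgamma_\ell^{(m)}(\bgamma_{\ell+k}^{(m)})^T$ — a law of large numbers needing only integrability of these products, i.e.\ second moments of $\gamma^{(m)}$ — together with $K(k/B_N)\to K(0)=1$. For the high lags it writes $\sum_{k=m+1}^{B_N}K(k/B_N)\boldsymbol\phi_{k,N}^{(m)}=\sum_\ell \bgamma_\ell^{(m)}{\mathbf H}_{\ell,N}^{(m)}$, where ${\mathbf H}_{\ell,N}^{(m)}$ is a weighted sum of $\bgamma_{\ell+k}^{(m)}$ with $k\ge m+1$ and is therefore \emph{independent} of $\bgamma_\ell^{(m)}$. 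In the second-moment expansion of this sum, the cross terms with $|r-\ell|>m$ vanish by independence and mean zero, while the $O(mN)$ near-diagonal terms are bounded by Cauchy--Schwarz and then factorized through that same independence, $\E[H_{\ell,N}^2\,(\gamma_\ell^{(m)})^2]=\E[H_{\ell,N}^2]\,\E[(\gamma_\ell^{(m)})^2]$ with $\E H_{\ell,N}^2=O(B_N/N^2)$, giving $O(mB_N/N)=o(1)$ — all with second moments only. To salvage your route you would have to either strengthen the moment assumptions (eighth moments of $X_n$ and corresponding mixed moments), or restructure the fluctuation bound so that every fourth-order expectation factorizes through independence as in the paper.
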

\begin{proof}
Since the sequence ${\boldsymbol \gamma}_\ell^{(m)}$ is $m$-dependent we have that
$$
{\boldsymbol \Sigma}^{(m)}=\E{\boldsymbol \gamma}_1{\boldsymbol \gamma}_1^T + \sum_{\ell=1}^{m} \E{\boldsymbol \gamma}_1 {\boldsymbol \gamma}_{\ell+1}^T + \sum_{\ell=1}^{m} \E {\boldsymbol \gamma}_{\ell+1} {\boldsymbol \gamma}_1^T.
$$
It follows from the ergodic theorem that for any fixed $k$ and $m$
$$
{\boldsymbol \phi}_{k,N}^{(m)}\convP \E{\bgamma}_{1}^{(m)}({\bgamma}_{1+k}^{(m)})^T.
$$
So using Assumptions \ref{a:K}(i),  \ref{a:K}(ii) and \ref{a:kernel}  we get that
$$
{\boldsymbol \phi}_{0,N}^{(m)} +\sum_{k=1}^{m} K(k/B_N) {\boldsymbol \phi}_{k,N}^{(m)}+\sum_{k=1}^{m} K(k/B_N)( {\boldsymbol \phi}_{k,N}^{(m)})^T\convP
\E{\boldsymbol \gamma}_1{\boldsymbol \gamma}_1^T + \sum_{\ell=1}^{m} \E{\boldsymbol \gamma}_1 {\boldsymbol \gamma}_{\ell+1}^T + \sum_{\ell=1}^{m} \E {\boldsymbol \gamma}_{\ell+1} {\boldsymbol \gamma}_1^T.
$$
Lemma \ref{l:m-dep} is proven if we show that
\begin{equation}\label{eq:zero-1}
\sum_{k=m+1}^{B_N} K(k/B_N) {\boldsymbol \phi}_{k,N}^{(m)}\convP 0
\end{equation}
and
\begin{equation}\label{eq:zero-2}
\sum_{k=m+1}^{B_N} K(k/B_N)( {\boldsymbol \phi}_{k,N}^{(m)})^T\convP 0.
\end{equation}
Clearly, it is enough to prove (\ref{eq:zero-1}).\\

Let
$$
{\mathbf G}_{N}^{(m)}=\sum_{k=m+1}^{B_N} K(k/B_N) {\boldsymbol \phi}_{k,N}^{(m)}.
$$
Elementary arguments show that
$$
\begin{aligned}
{\mathbf G}_{N}^{(m)}&=\sum_{k=m+1}^{B_N} K(k/B_N) {\boldsymbol \phi}_{k,N}^{(m)}\\
&= \sum_{k=m+1}^{B_N} K(k/B_N) \frac{1}{N} \sum_{\ell=1}^{N-k} {\boldsymbol \gamma}_{\ell}^{(m)} \left({\boldsymbol \gamma}_{\ell+k}^{(m)}\right)^T\\
&= \sum_{\ell=1}^{N-(m+1)}{\boldsymbol \gamma}_{\ell}^{(m)} {\mathbf H}_{\ell,N}^{(m)},
\end{aligned}
$$

where

$$
{\mathbf H}_{\ell,N}^{(m)}=\sum_{k=m+1}^{\min(N-\ell, B_N)}  \frac{K(k/B_N)}{N}  \left({\boldsymbol \gamma}_{\ell+k}^{(m)}\right)^T.
$$
Let
$$
{ G}_{N}^{(m)}(i,j)
= \sum_{\ell=1}^{N-(m+1)}{ \gamma}_{\ell}^{(m)}(i) { H}_{\ell,N}^{(m)}(j), \;\;\;1\leq i, j \leq pq,
$$

where ${ \gamma}_{\ell}^{(m)}(i)$ and $ { H}_{\ell,N}^{(m)}(j)$ are the
$i^{\mbox{\it {th}}}$ and the $j^{\mbox{{\it th}}}$ coordinates of the vectors ${\boldsymbol \gamma}_{\ell,N}^{(m)}$ and ${\mathbf H}_{\ell,N}^{(m)}$, respectively.
Next  we write
$$
\begin{aligned}
\E\left( G_{N}^{(m)}(i,j)\right)^2&=\E\left(\sum_{\ell=1}^{N-(m+1)}{\gamma}_{\ell}^{(m)}(i) { H}_{\ell,N}^{(m)}(j)\right)^2\\
&=\mathop{\mathop{\sum \sum}_{1\le r \le N-(m+1)}}_{1\le \ell \le N-(m+1)} \E\left({ H}_{\ell,N}^{(m)}(j){ \gamma}_{\ell}^{(m)}(i) {\gamma}_{r}^{(m)} (i){ H}_{r,N}^{(m)}(j)\right)\\
&= { G}_{1,N}^{(m)}(i,j) + { G}_{2,N}^{(m)}(i,j),
\end{aligned}
$$
where
$$
{ G}_{1,N}^{(m)}(i,j)=\mathop{\mathop{\mathop{\sum \sum}_{1\le r \le N-(m+1)}}_{1\le \ell \le N-(m+1)}}_{|r-\ell|\le m} \E\left({ H}_{\ell,N}^{(m)}(j) { \gamma}_{\ell}^{(m)}(i) { \gamma}_{r}^{(m)}(i) { H}_{r,N}^{(m)}(j)\right),
$$
and
$$
\begin{aligned}
{ G}_{2,N}^{(m)}(i,j)&=\mathop{\mathop{\mathop{\sum \sum}_{1\le r \le N-(m+1)}}_{1\le \ell \le N-(m+1)}}_{|r-\ell| > m} \E\left({ H}_{\ell,N}^{(m)}(j){ \gamma}_{\ell}^{(m)}(i) { \gamma}_{r}^{(m)}(i) { H}_{r,N}^{(m)}(j)\right).
\end{aligned}
$$
Notice that   ${\boldsymbol \gamma}_{\ell}^{(m)}$ is independent of ${\mathbf H}_{\ell,N}^{(m)}$, ${\mathbf H}_{r,N}^{(m)}$ and ${\boldsymbol \gamma}_{r}^{(m)}$, if $r>m+\ell$. Hence
$$
\begin{aligned}
\E\left({ H}_{\ell,N}^{(m)}(j) { \gamma}_{\ell}^{(m)}(i) \gamma_{r}^{(m)}(i) { H}_{r,N}^{(m)}(j)\right)&=\begin{cases}
\E{ \gamma}_{\ell}^{(m)}(i)\E\left({ H}_{\ell,N}^{(m)} (j)  \gamma_{r}^{(m)}(i) { H}_{r,N}^{(m)}(j)\right) & r>m+\ell,\\
\E\gamma_{r}^{(m)}(i)\E\left( H_{\ell,N}^{(m)}(j) \gamma_{\ell}^{(m)}(i)
 { H}_{r,N}^{(m)}(j)\right) & \ell > m+r,\\
\E\left( H_{\ell,N}^{(m)}(j) \gamma_{\ell}^{(m)}(i) \gamma_{r}^{(m)} (i){ H}_{r,N}^{(m)}(j)\right) & |\ell -r|\le m,\\
\end{cases}\\[.3cm]
&=\begin{cases}
0 & |\ell -r|> m,\\
\E\left( H_{\ell,N}^{(m)}(j) \gamma_{\ell}^{(m)}(i)\gamma_{r}^{(m)}(i) { H}_{r,N}^{(m)}(j)\right) & |\ell -r|\le m.\\
\end{cases}
\end{aligned}
$$
Thus we have
$$
\E{ G}_{2,N}^{(m)}(i,j)=0.
$$
Let $M$ be an upper bound on $|K(t)|$.  Using the fact that $ {\boldmath \gamma}_{\ell}^{(m)}$ is an m-dependent sequence, we now obtain the following:

\begin{align}\label{e:bound}
\E( H_{\ell,N}^{(m)}(j))^2&= \sum_{k=m+1}^{\min(N-\ell, B_N)}\sum_{v=m+1}^{\min(N-\ell, B_N)}  \frac{K(k/B_N)}{N} \frac{K(v/B_N)}{N} \E\left( \gamma_{\ell+k}^{(m)}(j) \gamma_{\ell+v}^{(m)}(j)\right)\\
&\le \frac{M^2}{N^2}\sum_{k=m+1}^{\min(N-\ell, B_N)}\sum_{v=m+1}^{\min(N-\ell, B_N)}  \E\left( \gamma_{\ell+k}^{(m)} (j)\gamma_{\ell+v}^{(m)}(j)\right) \notag\\
&\le \frac{M^2}{N^2} B_N  \sum_{r=-m}^m \E\left| \gamma_{0}^{(m)}(j) \gamma_{r}^{(m)}(j)\right|\notag\\
&=O\left(\frac{B_N}{N^2}\right).\notag
\end{align}
In the next step we will first  use the Cauchy-Schwarz inequality, then the independence of
$H_{\ell,N}^{(m)}(j)$ and $\gamma_{\ell}^{(m)}(i)$ and the independence of $H_{r,N}^{(m)}(j)$ and $\gamma_{r}^{(m)}(i)$ to get
$$
\begin{aligned}
\left|{ G}_{2,N}^{(m)}(i,j)\right|&
\le \mathop{\mathop{\mathop{\sum \sum}_{1\le r \le N-(m+1)}}_{1\le \ell \le N-(m+1)}}_{|r-\ell|\le m} \E\left| H_{\ell,N}^{(m)}(j)  \gamma_{\ell}^{(m)}(i) \gamma_{r}^{(m)}(i) { H}_{r,N}^{(m)}(j)\right|\\
&\le \mathop{\mathop{\mathop{\sum \sum}_{1\le r \le N-(m+1)}}_{1\le \ell \le N-(m+1)}}_{|r-\ell|\le m} \left(\E\left( H_{\ell,N}^{(m)}(j) \gamma_{\ell}^{(m)}(i)\right)^2\right)^{1/2}\left(\E\left(  \gamma_{r}^{(m)}(i) { H}_{r,N}^{(m)}(j)\right)^2\right)^{1/2}\\
&\le \mathop{\mathop{\mathop{\sum \sum}_{1\le r \le N-(m+1)}}_{1\le \ell \le N-(m+1)}}_{|r-\ell|\le m} \left(\E\left({ H}_{\ell,N}^{(m)}(j) \right)^2\right)^{1/2} \left(\E\left( \gamma_{\ell}^{(m)}(i)\right)^2\right)^{1/2}\left(\E\left(  \gamma_{r}^{(m)}(i)\right)^2\right)^{1/2} \\
&\hspace{2 cm}\times \left(\E\left( { H}_{r,N}^{(m)}(j)\right)^2\right)^{1/2}\\
&\le 2mN O\left(\frac{{B_N^{1/2}}}{N}\right) O(1) O(1) O\left(\frac{{B_N^{1/2}}}{N}\right)\\
%&=O\left(\frac{N}{N-B_N}\right)\ O\left(\frac{B_N}{N-B_N}\right)\\
&=  O\left(\frac{B_N}{N}\right)\\
&=o(1),
\end{aligned}
$$
where we also used (\ref{e:bound}) and Assumption \ref{a:kernel}. This  completes the proof of Lemma \ref{l:m-dep}.
\end{proof}

\medskip

%%%%%%%%%%%%%%%%%%%%%%%%%%%%%%%%%%%%%%%%%%%%%%%%%%%%%%%%%%%%%%%%%%%%%%%%%%%%%%%%
%%%%%%%%%%%%%%%%%%%%%%%%%%%%%%%%%%%%%%%%%%%%%%%%%%%%%%%%%%%%%%%%%%%%%%%%%%%%%%%%

Let  $\im^2=-1$.
\begin{Lemma}\label{l:m-ssum} If Assumptions \ref{a:shifts}-\ref{a:k-dependent}, \ref{a:K} and
\ref{a:kernel} are satisfied, then for all $1\leq j \leq pq$ we have
\begin{equation}\label{eq:ssum-1}
\limsup_{N\rightarrow \infty}\limsup_{m\rightarrow \infty}\sup_{-\infty<t<\infty}\E\left(\frac{1}{N^{1/2}} \sum_{k=1}^N(\gamma_k(j)- \gamma_k^{(m)}(j))e^{\im kt}\right)^2 =0,
\end{equation}
\begin{equation}\label{eq:ssum-2}
\limsup_{N\rightarrow \infty}\limsup_{m\rightarrow \infty}\sup_{-\infty<t<\infty}\E\left(\frac{1}{N^{1/2}} \sum_{k=1}^N\gamma_k(j)e^{\im kt}\right)^2 <\infty
\end{equation}
and
\begin{equation}\label{eq:ssum-3}
\limsup_{N\rightarrow \infty}\limsup_{m\rightarrow \infty}\sup_{-\infty<t<\infty}\E\left(\frac{1}{N^{1/2}} \sum_{k=1}^N \gamma_k^{(m)}(j)e^{\im kt}\right)^2 <\infty.
\end{equation}
\end{Lemma}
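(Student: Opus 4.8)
The plan is to reduce all three statements to the absolute summability (in the lag) of the relevant autocovariance sequences, and then to obtain those summability bounds by a ``half-lag'' decoupling argument powered by \eqref{eq:aue}. Fix the coordinate $j$ and abbreviate $\gamma_k=\gamma_k(j)$, $\gamma_k^{(m)}=\gamma_k^{(m)}(j)$, and $D_k^{(m)}=\gamma_k-\gamma_k^{(m)}$; set $\delta_\ell=(\E(\gamma_0-\gamma_0^{(\ell)})^2)^{1/2}$, so that $\sum_{\ell\ge 1}\delta_\ell<\infty$ by \eqref{eq:aue}, and I will use that in the present coupling $\delta_\ell$ may be taken non-increasing. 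First I would record that $\gamma_k$, $\gamma_k^{(m)}$ and $D_k^{(m)}$ are stationary and mean zero: the zero mean of $\gamma_k$ follows from Assumption~\ref{a:uncorrelated} (which kills $\E\la X_\ell,v_j\ra\la\epsilon_\ell,w_i\ra$) together with $\la v_j,u_i\ra=0$ for $j\le p<r$ (which kills $\E\la X_\ell,v_j\ra\la X_\ell,u_i\ra=\lambda_j\la v_j,u_i\ra$), and the identical computation applies to $\gamma_k^{(m)}$ since $(X_k^{(m)},\epsilon_k^{(m)})\isd(X_k,\epsilon_k)$.

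Second, for any real, stationary, mean-zero sequence $\{Z_k\}$ with $c_Z(h)=\E[Z_0Z_h]$ I would expand (reading the square as $|\cdot|^2$)
$$
\E\left|\frac{1}{N^{1/2}}\sum_{k=1}^N Z_k e^{\im kt}\right|^2=\sum_{|h|<N}\Bigl(1-\tfrac{|h|}{N}\Bigr)c_Z(h)\,e^{\im h t};
$$
reading it literally merely replaces the phase $e^{\im ht}$ by $e^{\im(k+l)t}$, leaving the final majorant unchanged, since in either case
$$
\sup_{-\infty<t<\infty}\left|\E\left(\frac{1}{N^{1/2}}\sum_{k=1}^N Z_k e^{\im kt}\right)^2\right|\le \frac{1}{N}\sum_{k,l=1}^N|c_Z(k-l)|\le \sum_{h\in\mathbb Z}|c_Z(h)|.
$$
Thus \eqref{eq:ssum-2}, \eqref{eq:ssum-3} and \eqref{eq:ssum-1} follow once I show, respectively, that $\sum_h|c(h)|<\infty$ with $c(h)=\E[\gamma_0\gamma_h]$, that $\sum_h|c_m(h)|$ with $c_m(h)=\E[\gamma_0^{(m)}\gamma_h^{(m)}]$ is bounded uniformly in $m$, and that $\sum_h|\rho_m(h)|\to 0$ with $\rho_m(h)=\E[D_0^{(m)}D_h^{(m)}]$. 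All three bounds are free of $N$ and $t$, so the outer $\limsup_N$ and $\sup_t$ are harmless.

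The engine is the following decoupling. For $h\ge1$ the half-lag approximant $\gamma_h^{(\lceil h/2\rceil)}$ depends only on innovations $\boldsymbol\eta_r$ with $r\ge h-\lceil h/2\rceil+1\ge1$ (plus fresh copies), hence is independent of $\gamma_0,\gamma_0^{(m)},D_0^{(m)}$, which are measurable with respect to $\{\boldsymbol\eta_r:r\le0\}$ and copies; being mean zero, it satisfies $\E[\,\cdot\,\gamma_h^{(\lceil h/2\rceil)}]=0$. For $c(h)$ I would then write $c(h)=\E[\gamma_0(\gamma_h-\gamma_h^{(\lceil h/2\rceil)})]$, so $|c(h)|\le(\E\gamma_0^2)^{1/2}\delta_{\lceil h/2\rceil}$, which is summable because $\sum_{h\ge1}\delta_{\lceil h/2\rceil}\le2\sum_\ell\delta_\ell<\infty$; this settles \eqref{eq:ssum-2}. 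For \eqref{eq:ssum-3}, $m$-dependence gives $c_m(h)=0$ for $|h|\ge m$, while for $0<h<m$ I decouple as $c_m(h)=\E[\gamma_0^{(m)}(\gamma_h^{(m)}-\gamma_h^{(\lceil h/2\rceil)})]$; since $\|\gamma_h^{(m)}-\gamma_h^{(\lceil h/2\rceil)}\|_2\le\delta_m+\delta_{\lceil h/2\rceil}\le2\delta_{\lceil h/2\rceil}$ (here monotonicity of $\delta_\ell$ and $m>\lceil h/2\rceil$ enter) and $\|\gamma_0^{(m)}\|_2\le\|\gamma_0\|_2+\delta_1$, we get $|c_m(h)|\le C\delta_{\lceil h/2\rceil}$ with $C$ independent of $m$, and summing yields the uniform bound.

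The decisive case is \eqref{eq:ssum-1}, where the estimate must gain a factor vanishing with $m$. I would split $\rho_m(h)=\E[D_0^{(m)}\gamma_h]-\E[D_0^{(m)}\gamma_h^{(m)}]$ and decouple each term by the same half-lag trick, but now \emph{retaining} the factor $\|D_0^{(m)}\|_2=\delta_m$ instead of bounding it by a constant: this gives $|\E[D_0^{(m)}\gamma_h]|\le\delta_m\delta_{\lceil h/2\rceil}$ and (using $m$-dependence for $h\ge m$, the half-lag comparison for $0<h<m$) $|\E[D_0^{(m)}\gamma_h^{(m)}]|\le2\delta_m\delta_{\lceil h/2\rceil}$, hence $|\rho_m(h)|\le3\delta_m\delta_{\lceil h/2\rceil}$ for $h\ge1$ (symmetrically for $h\le-1$) and $\rho_m(0)\le\delta_m^2$. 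Summing,
$$
\sum_{h\in\mathbb Z}|\rho_m(h)|\le\delta_m^2+6\delta_m\sum_{h\ge1}\delta_{\lceil h/2\rceil}\le\delta_m\Bigl(\delta_m+12\sum_{\ell\ge1}\delta_\ell\Bigr)\longrightarrow0
$$
as $m\to\infty$, giving \eqref{eq:ssum-1}. I expect the main obstacle to be exactly this product structure: the naive bound $|\rho_m(h)|\le\delta_m^2$ is useless once summed over the $O(m)$ relevant lags, so one must combine the decoupling (for summability in $h$) with the retained small factor $\delta_m$ (for decay in $m$). The accompanying subtlety is the uniform-in-$m$ bound in \eqref{eq:ssum-3}, for which the monotonicity of the coefficients $\delta_\ell$ is what prevents $\delta_m$ from being summed $m$ times.
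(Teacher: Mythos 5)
Your proposal is correct in substance and runs on the same engine as the paper's own proof: both arguments use the coupling structure to replace $\gamma_h$ (or $\gamma_h^{(m)}$) by an approximant that is independent of the time-zero variables and has mean zero, then apply Cauchy--Schwarz and the summability (\ref{eq:aue}); and your key observation for (\ref{eq:ssum-1}) --- retain the small factor $\delta_m$ from one slot and summability in the lag from the other --- is exactly the paper's bound $c_2(m)=\delta_m\sum_{k}\delta_k$. The differences are organizational rather than conceptual: you first reduce all three claims to absolute summability of autocovariances (which also cleanly disposes of the ambiguity between $\E(\cdot)^2$ and $\E|\cdot|^2$, a point the paper passes over), and you decouple with the half-lag approximant $\gamma_h^{(\lceil h/2\rceil)}$, whereas the paper keeps the double sum over $k<\ell$ and decouples with the full-lag approximant $\gamma_\ell^{(\ell-k)}$; either choice works. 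Your explicit verification that $\E\gamma_\ell(j)=0$ is also welcome, since the paper uses this silently in every decoupling step.

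The one spot you must not leave as a bare assertion is the monotonicity of $\delta_\ell$. It is genuinely load-bearing in your write-up: without it, your bound for (\ref{eq:ssum-3}) degenerates to $C(m\delta_m+\sum_\ell\delta_\ell)$ and your bound for (\ref{eq:ssum-1}) to $\delta_m(m\delta_m+C)$, and $m\delta_m$ need not stay bounded for a general summable sequence (take $\delta_\ell=\ell^{-1/2}$ along $\ell=4^k$ and $\delta_\ell=2^{-\ell}$ otherwise). Fortunately the claim is true for this coupling, but it needs a short proof: with $\mathcal{G}_\ell=\sigma({\boldsymbol \eta}_0,\dots,{\boldsymbol \eta}_{-\ell+1})$, the variables $\gamma_0$ and $\gamma_0^{(\ell)}$ are conditionally on $\mathcal{G}_\ell$ independent and identically distributed, whence $\delta_\ell^2=2\,\E\left(\gamma_0-\E[\gamma_0\mid\mathcal{G}_\ell]\right)^2$; since the $\sigma$-fields $\mathcal{G}_\ell$ increase, $\E\left(\E[\gamma_0\mid\mathcal{G}_\ell]\right)^2$ is non-decreasing, so $\delta_\ell$ is non-increasing. (The same identity gives $\|\gamma_h^{(m)}-\gamma_h^{(r)}\|_2=\delta_r$ exactly when $m>r$, which you could use instead of monotonicity.) For what it is worth, the paper's proof faces the identical issue in the cross term involving $\gamma_\ell^{(m)}$, which it dismisses with ``similar arguments show'' that the sum is $Nc_3(m)$, $c_3(m)\to 0$; making that step rigorous requires the same projection identity, so with the monotonicity point patched your proof is, if anything, more explicit about the real content of the lemma than the published one.
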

\begin{proof}
First we note that
\begin{align}\notag
\E\left( \sum_{k=1}^N(\gamma_k(j)- \gamma_k^{(m)}(j))e^{\im kt}\right)^2=&
\sum_{1\leq k\leq N}\E ((\gamma_k(j)- \gamma_k^{(m)}(j))e^{\im kt})^2 \notag
\\
&+2\sum_{1\leq k < \ell\leq N}\E\left[ (\gamma_k(j)- \gamma_k^{(m)}(j))(\gamma_\ell(j)- \gamma_\ell^{(m)}(j))\right]e^{\im (k+\ell)t}.\notag
\end{align}
It follows from (\ref{eq:aue}) that there is a sequence $
c_1(m)\rightarrow 0$ such that
$$
\left|\sum_{1\leq k\leq N}\E (\gamma_k(j)- \gamma_k^{(m)}(j))^2e^{\im 2kt}\right| \leq Nc_1(m).
$$
Next we write
$$
\begin{aligned}
\sum_{1\leq k < \ell\leq N}&\E\left[ (\gamma_k(j)- \gamma_k^{(m)}(j))\gamma_\ell(j)\right]e^{\im (k+\ell)t}\\
&=\sum_{1\leq k < \ell\leq N}\E\left[ (\gamma_k(j)- \gamma_k^{(m)}(j))(\gamma_\ell(j)-\gamma_\ell^{(\ell-k)}(j))\right]e^{\im (k+\ell)t},
\end{aligned}
$$
since $(\bgamma_k,   \bgamma_k^{(m)})$ and $\bgamma_\ell^{(\ell-k)}$ are independent. Using the Cauchy-Schwarz inequality first, then   (\ref{eq:aue})
again we get that
$$
\begin{aligned}
\sum_{1\leq k < \ell\leq N}&\left|\E\left[ (\gamma_k(j)- \gamma_k^{(m)}(j))(\gamma_\ell(j)-\gamma_\ell^{(\ell-k)}(j))\right]e^{\im (k+\ell)t}\right|\\
&\leq \sum_{1\leq k < \ell\leq N}\left[\E(\gamma_k(j)- \gamma_k^{(m)}(j))^2\right]^{1/2}\left[\E(\gamma_\ell(j)-\gamma_\ell^{(\ell-k)}(j))^2\right]^{1/2}\\
&\leq N\left[\E(\gamma_1(j)- \gamma_1^{(m)}(j))^2\right]^{1/2}\sum_{1\leq k < \infty}
\left[\E(\gamma_1(j)-\gamma_1^{(k)}(j))^2\right]^{1/2}\\
&=Nc_2(m)
\end{aligned}
$$
with some sequence $c_2(m)\rightarrow 0$. Similar arguments show that
$$
\sum_{1\leq k < \ell\leq N}\left|\E\left[ (\gamma_k(j)- \gamma_k^{(m)}(j))\gamma_\ell^{(m)}(j)\right]e^{\im (k+\ell)t}\right|= N c_3(m)
$$
with some  sequence $c_3(m)\rightarrow 0$, completing the proof of (\ref{eq:ssum-1}).\\

Similarly to the proof of (\ref{eq:ssum-1}), we write
$$
\begin{aligned}
\E\left( \sum_{k=1}^N\gamma_k(j)e^{\im kt}\right)^2&=
\sum_{k=1}^N\sum_{\ell=1}^N\E \gamma_k(j)\gamma_\ell(j)e^{\im (k+\ell)t}\\
&=\sum_{k=1}^N\E \gamma_k^2(j)e^{2\im kt}+2\sum_{1\leq k<\ell \leq N}\E \gamma_k(j)\gamma_\ell(j)e^{\im (k+\ell)t}\\
&=\E \gamma_1^2(j)\sum_{k=1}^Ne^{2\im kt}+2\sum_{1\leq k<\ell \leq N}\E \gamma_k(j)(\gamma_{\ell}(j)-\gamma_\ell^{(\ell-k)}(j))e^{\im (k+\ell)t},
%+2\sum_{1\leq k<\ell <N}\E \gamma_k(j)\gamma_\ell(j)e^{\im (k+\ell)t}.
\end{aligned}
$$
since by the independence of $\gamma_k(j)$ and $\gamma_\ell^{(\ell-k)}(j)$ we have that $\E\gamma_k(j)\gamma_\ell^{(\ell-k)}(j)=0$. Using the Cauchy-Schwarz inequality with (\ref{eq:aue}) we get that
$$
\left|\sum_{1\leq k<\ell \leq  N}\E \gamma_k(j)(\gamma_{\ell}(j)-\gamma_\ell^{(\ell-k)}(j))e^{\im (k+\ell)t}\right|\leq cN
$$
with some constant $c$, completing the proof of (\ref{eq:ssum-2}). The same  arguments can be used to prove (\ref{eq:ssum-3}).
\end{proof}

\medskip
Following Taniguchi and Kakizawa  (2000) we define ${\mathbf S}_N(t)=\sum_{k=1}^N {\boldsymbol \gamma}_{k,N} e^{\im kt}$ and ${\mathbf S}_N^{(m)}(t)=\sum_{k=1}^N {\boldsymbol \gamma}_{k,N}^{(m)} e^{\im kt}$.  Let ${\mathbf S}_N^*(t)$ be the conjugate transpose of ${\mathbf S}_N(t)$ and introduce
$$
\begin{aligned}
{\mathbf I}_N(t)&=\frac{1}{N} {\mathbf S}_N(t){\mathbf S}_N^*(t)\\
&=\frac{1}{N} \sum_{k=1}^N {\boldsymbol \gamma}_{k} e^{\im kt}    \sum_{{\ell}=1}^N {\boldsymbol \gamma}_{\ell}^T e^{-\im {\ell}t}\\
&=\frac{1}{N} \sum_{{\ell}=1}^N \sum_{k=1}^N e^{\im t(k-{\ell})} {\boldsymbol \gamma}_{k}     {\boldsymbol \gamma}_{\ell}^T \\
&=\sum_{k=1-N}^{N-1} e^{-\im tk}  \frac{1}{N} \sum_{\ell=\max(1,1-k)}^{\min(N,N-k)} {\boldsymbol \gamma}_{k}{\boldsymbol \gamma}_{\ell+k}^T\\
&=\sum_{k=1-N}^{N-1} e^{-\im tk}  {\boldsymbol \phi}_{k,N}.\\
\end{aligned}
$$
Similarly we define
$$
\begin{aligned}
{\mathbf I}_N^{(m)}(t)&=\frac{1}{N} {\mathbf S}^{(m)}_N(t)\left({\mathbf S}^{(m)}_N(t)\right)^*
=\sum_{k=1-N}^{N-1} e^{-\im tk}  {\boldsymbol \phi}_{k,N}^{(m)}.\\
\end{aligned}
$$

\begin{Lemma}\label{l:justwhatweneeded} If Assumptions \ref{a:shifts}-\ref{a:k-dependent}, \ref{a:K} and
\ref{a:kernel} are satisfied, then we have
$$
\limsup_{N\rightarrow \infty}\limsup_{m\rightarrow \infty} \sup_{-\infty <t <\infty}\E\left| {\mathbf I}_{N}({t}) - {\mathbf I}_{N}^{(m)}({t})\right| =0.
$$
\end{Lemma}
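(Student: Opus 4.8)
The plan is to reduce the matrix statement to a coordinatewise estimate and then exploit the bilinear (product) structure of the periodogram together with the second-moment bounds of Lemma \ref{l:m-ssum}. Since $|\cdot|$ denotes the largest modulus among the entries and there are only $(pq)^2$ of them, it suffices to establish the claim for a single entry $(i,j)$, $1\le i,j\le pq$, and then take the maximum. I would write the $i$-th coordinate of $\mathbf{S}_N(t)$ as $S_{N,i}(t)=\sum_{k=1}^N\gamma_k(i)e^{\im kt}$ and the $i$-th coordinate of $\mathbf{S}_N^{(m)}(t)$ as $S_{N,i}^{(m)}(t)=\sum_{k=1}^N\gamma_k^{(m)}(i)e^{\im kt}$, and abbreviate $a_i=N^{-1/2}S_{N,i}(t)$ and $a_i^{(m)}=N^{-1/2}S_{N,i}^{(m)}(t)$ (suppressing the dependence on $N$ and $t$). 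Then $(\mathbf{I}_N(t))_{ij}=a_i\overline{a_j}$ and $(\mathbf{I}_N^{(m)}(t))_{ij}=a_i^{(m)}\overline{a_j^{(m)}}$.

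The key algebraic step is the telescoping identity
$$
a_i\overline{a_j}-a_i^{(m)}\overline{a_j^{(m)}}=(a_i-a_i^{(m)})\overline{a_j}+a_i^{(m)}\,\overline{(a_j-a_j^{(m)})}.
$$
Taking expected modulus and applying the Cauchy-Schwarz inequality to each of the two products yields
$$
\E\left|(\mathbf{I}_N(t))_{ij}-(\mathbf{I}_N^{(m)}(t))_{ij}\right|\le \left(\E|a_i-a_i^{(m)}|^2\right)^{1/2}\left(\E|a_j|^2\right)^{1/2}+\left(\E|a_i^{(m)}|^2\right)^{1/2}\left(\E|a_j-a_j^{(m)}|^2\right)^{1/2}.
$$
Next I would feed in the bounds of Lemma \ref{l:m-ssum}. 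The factors $\E|a_j|^2$ and $\E|a_i^{(m)}|^2$ stay bounded uniformly in $t$ in the iterated limit by the moduli analogues of \eqref{eq:ssum-2} and \eqref{eq:ssum-3}, while the difference factors $\E|a_i-a_i^{(m)}|^2$ and $\E|a_j-a_j^{(m)}|^2$ are driven to $0$ (after the double $\limsup$) by \eqref{eq:ssum-1}. Hence each entry satisfies $\limsup_{N}\limsup_{m}\sup_t \E|(\mathbf{I}_N(t))_{ij}-(\mathbf{I}_N^{(m)}(t))_{ij}|=0$, and taking the maximum over the finitely many pairs $(i,j)$ gives the lemma.

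The one point requiring care, and the main obstacle, is that Lemma \ref{l:m-ssum} is stated for the non-conjugated square $\E\bigl(N^{-1/2}\sum_k(\cdots)e^{\im kt}\bigr)^2$, whereas the periodogram entries involve the squared modulus $\E|N^{-1/2}\sum_k(\cdots)e^{\im kt}|^2=\E\bigl[(\cdots)\overline{(\cdots)}\bigr]$. These differ only in that the cross terms carry the phase $e^{\im(k-\ell)t}$ rather than $e^{\im(k+\ell)t}$; since the proof of Lemma \ref{l:m-ssum} bounds every cross term in absolute value (via Cauchy-Schwarz and \eqref{eq:aue}) and $|e^{\im(k\pm\ell)t}|=1$, the identical argument delivers the squared-modulus versions of \eqref{eq:ssum-1}--\eqref{eq:ssum-3}, uniformly in $t$. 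With that observation the estimates above apply verbatim, and the remaining steps are routine.
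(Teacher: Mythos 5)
Your proof is correct and follows essentially the same route as the paper's: the same telescoping decomposition of $\frac{1}{N}\mathbf{S}_N(t)\mathbf{S}_N^*(t)-\frac{1}{N}\mathbf{S}_N^{(m)}(t)\bigl(\mathbf{S}_N^{(m)}(t)\bigr)^*$ into two cross terms, followed by the Cauchy--Schwarz inequality and the bounds of Lemma \ref{l:m-ssum}. Your closing observation --- that Lemma \ref{l:m-ssum} is stated for the non-conjugated square $\E\bigl(N^{-1/2}\sum_k(\cdots)e^{\im kt}\bigr)^2$ while the application requires the squared modulus, and that its proof carries over verbatim because every cross term there is bounded in absolute value and $|e^{\im(k\pm\ell)t}|=1$ --- addresses a point the paper passes over silently, and making it explicit strengthens the argument.
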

\begin{proof} By the triangle inequality we have
$$
\begin{aligned}
 \left| {\mathbf I}_N({t}) - {\mathbf I}_N^{(m)}({t})\right|
&=
\left| \frac{1}{N}{\mathbf S}_N({t}){\mathbf S}_N^*({t}) - \frac{1}{N}{\mathbf S}_N^{(m)}({t})\left({\mathbf S}_N^{(m)}({t})\right)^* \right|
\\
&\leq  \frac{1}{N}\left|{\mathbf S}_N({t})({\mathbf S}_N^*({t}) -({\mathbf S}_N^{(m)}({t}))^*) \right|\\
&\qquad + \frac{1}{N}\left|({\mathbf S}_N({t})-{\mathbf S}_N^{(m)}({t}))( {\mathbf S}_N^{(m)}({t}))^*\right|.
\end{aligned}
$$
Now the result follows from Lemma \ref{l:m-ssum} via the Cauchy-Schwartz inequality.
\end{proof}

\begin{proof}[{\bf Proof of Theorem \ref{Th:bart-1}}]
Define the Fourier transform, $\hat{K}(u)$, of the kernel $K$ as $\hat{K}(u) = \frac{1}{2\pi} \int_{-\infty}^{\infty} K(s) e^{-\im su}\ds$.  Since $K$ and $\hat{K}$ are in $L^1$ and both are Lipschitz functions, the inversion formula gives $K(s)= \int_{-\infty}^{\infty} \hat{K}(u) e^{\im su}\du.$  From the relationship between $K$ and $\hat{K}$ and from the fact that $K$ is supported on the interval $[-1,1]$, we obtain:
$$
\begin{aligned}
\tilde{\boldsymbol \Sigma}_N &= \sum_{k=-B_N}^{B_N} K(k/B_N) {\boldsymbol \phi}_{k,N}\\
&= \sum_{k=1-N}^{N-1} K(k/B_N) {\boldsymbol \phi}_{k,N}\\
&= \sum_{k=1-N}^{N-1} \left( \int_{-\infty}^{\infty} \hat{K}(u) e^{\im (k/B_N)u}\du\right) {\boldsymbol \phi}_{k,N}\\
&= \int_{-\infty}^{\infty} \hat{K}(u) \sum_{k=1-N}^{N-1} e^{-\im(-u/B_N)k} {\boldsymbol \phi}_{k,N} \du\\
&= \int_{-\infty}^{\infty} \hat{K}(u) {\mathbf I}_N(-u/B_N) \du.\\
\end{aligned}
$$
Similarly,
$$
\tilde{\boldsymbol \Sigma}_N^{(m)} = \int_{-\infty}^{\infty} \hat{K}(u) {\mathbf I}_N^{(m)}(-u/B_N) \du.
$$
Hence we have
$$
\begin{aligned}
 \E \biggl|\tilde{\boldsymbol \Sigma}_N - \tilde{\boldsymbol \Sigma}_N^{(m)}
\biggl|
&= \E \left|\int_{-\infty}^{\infty} \hat{K}(u) \left({\mathbf I}_N(u/B_N) - {\mathbf I}_N^{(m)}(u/B_N)\right) \du \right|\\
%&\le \int_{-\infty}^{\infty}\E\left| \hat{K}(u) \left({\mathbf I}_N(u/B_N) - {\%mathbf I}_N^{(m)}(u/B_N)\right) \right| \du \\
&\le  \int_{-\infty}^{\infty}\left|\hat{K}(u)\right| \E\left|  \left({\mathbf I}_N(u/B_N) - {\mathbf I}_N^{(m)}(u/B_N)\right) \right| \du \\
%&\le \int_{-\infty}^{\infty}\left|\hat{K}(u)\right| \left|\left| {\mathbf I}_N(%u/B_N) - {\mathbf I}_N^{(m)}(u/B_N)\right| \right|_1 \du \\
&\le  \sup_{-\infty <t <\infty} \left|\left| {\mathbf I}_N(t) - {\mathbf I}_N^{(m)}(t)\right| \right|_1 \int_{-\infty}^{\infty}\left|\hat{K}(u)\right|  \du .
\end{aligned}
$$
Applying Lemma \ref{l:justwhatweneeded} we conclude that
$$
\biggl|\tilde{\boldsymbol \Sigma}_N - \tilde{\boldsymbol \Sigma}_N^{(m)}
\biggl|\convP 0,
$$
as $\min(N,m)\rightarrow \infty$. On the other hand, by Lemma \ref{l:m-dep},     for every fixed $m$
$$
\tilde{\boldsymbol \Sigma}_N^{(m)}\convP {\boldsymbol \Sigma}^{(m)}.
$$
Since
$$
{\boldsymbol \Sigma}^{(m)}\rightarrow {\boldsymbol \Sigma},
$$
as $m\rightarrow \infty$, the proof of the theorem is complete.
\end{proof}
\begin{proof}[{\bf Proof of Theorem \ref{Th:bart-2}}] It follows from the definition of $\hat{\epsilon}_\ell$, (\ref{eq:Y}) and the orthonormality of $\{w_j, 1\leq j <\infty\}$ that
$$
\begin{aligned}
\la \hat{\epsilon}_\ell, w_i\ra =\la \epsilon_\ell, w_i\ra +\la X_\ell, u_i\ra +\la \nu_\ell, w_i\ra,
\end{aligned}
$$
where
$$
 \nu_\ell(t)=\sum_{i=1}^q\sum_{j=1}^p{\psi}_{i,j}{w}_{i}(t)\la X_\ell, {v}_{j}\ra -\sum_{i=1}^q\sum_{j=1}^p\hat{\psi}_{i,j}\hat{w}_{i,N}(t)\la X_\ell, \hat{v}_{j,N}\ra.
$$
Following the proof of Theorem  \ref{Th:bart-1} one can show that the estimates in (\ref{e:eig-w}) and (\ref{e:eig-v}) yield
\begin{equation}\label{eq:who}
\left|\breve{\Sigma}_N(i,j, i',j')-\hat{d}_{i,N}\hat{c}_{j,N} \hat{d}_{i',N}\hat{c}_{j',N}  {\Sigma}_N^*(i,j, i',j')\right| =o_P(1),
\end{equation}
where
$$
\breve{\Sigma}_N(i,j, i',j') = \sum_{k=-(N-1)}^{N-1} K(k/B_N) \hat{{ \phi}}_{k,N}(i,j, i',j')
$$
and
$$
{\Sigma}_N^*(i,j, i',j') = \sum_{k=-(N-1)}^{N-1} K(k/B_N) {{ \phi}}^*_{k,N}(i,j, i',j')
$$
with
$$
\hat{{\phi}}_{k,N}(i,j, i',j')= \frac{1}{N} \sum_{\ell=\max(1,1-k)}^{\min(N,N-k)} \hat{{\gamma}}_{\ell}(i,j)\hat{{\gamma}}_{\ell+k}(i',j'),
$$
$$
{{\phi}}^*_{k,N}(i,j, i',j')= \frac{1}{N} \sum_{\ell=\max(1,1-k)}^{\min(N,N-k)} {{\gamma}}^*_{\ell}(i,j){{\gamma}}_{\ell+k}^*(i',j'),
$$
and
$$
{\gamma}^*_{\ell}(i,j)=\la X_{\ell}, v_j\ra \la \hat{\epsilon}_{\ell},w_i\ra.
$$
Since
$$
\la X_\ell, v_j\ra \la \hat{\epsilon}_\ell, w_i\ra =\gamma_\ell(i,j)+\la X_\ell, v_j\ra \la \nu_\ell, w_i\ra,
$$
(\ref{e:eig-w}),  (\ref{e:eig-v}) and Lemma \ref{l:betahat} imply that

\begin{equation}\label{eq:what}
\left|\tilde{\Sigma}_N-{\Sigma}^*_N\right|= o_P(1).
\end{equation}
\end{proof}

We have seen in Theorem \ref{Th:bart-1} that $\left|\tilde{\boldsymbol \Sigma}_N - {\boldsymbol \Sigma}\right| = \op{1}$.  In \eqref{eq:who} and \eqref{eq:what} we have seen that $\left|\breve{\boldsymbol \Sigma}_N - {\boldsymbol \zeta}_N {\boldsymbol \Sigma}_N^*{\boldsymbol \zeta}_N\right| = \op{1}$ and $\left|\tilde{\Sigma}_N-{\Sigma}^*_N\right|= o_P(1)$.  Therefore, $\left|\hat{{\boldsymbol \Sigma}}_N- {\boldsymbol \Sigma}\right| = \op{1}$, completing the proof.

%%%%%%%%%%%%%%%%%%%%%%%%%%%%%%%%%%%%%%%%%%%%%%%%%%%%%%%%%%%%%%%%%%%%%%%%%%%%%%%%%%%%%%%%%%%%%%%%%%%%%%%%%%%%%%%%%%%%%%%%%%%%%%%%
%%%%%%%%%%%%%%%%%%%%%%%%%%%%%%%%%%%%%%%%%%%%%%%%%%%%%%%%%%%%%%%%%%%%%%%%%%%%%%%%%%%%%%%%%%%%%%%%%%%%%%%%%%%%%%%%%%%%%%%%%%%%%%%%

\end{document}